\newcommand{\jacobi}[1]{A_{#1}}
\newcommand{\aomoto}[2]{X_{#1}(#2)}
\newcommand{\aom}[2]{X_{#1}(#2)}
\newcommand{\T}{\mathcal{T}}
\newcommand{\F}{\mathcal{F}}
\renewcommand{\P}{\mathcal{P}}
\newcommand{\ind}[2]{I_{#1}(#2)}
\newcommand{\cc}{\mathsf{cc}}
\newcommand{\from}{\leftarrow}
\renewcommand{\a}{\mathscr{a}}
\renewcommand{\b}{\mathscr{b}}
\title{Point Spectrum of Periodic Operators on Universal Covering Trees}
\renewcommand{\email}[2][]{%
  \ifx\emails\@empty\relax\else{\g@addto@macro\emails{,\space}}\fi%
  \@ifnotempty{#1}{\g@addto@macro\emails{\textrm{(#1)}\space}}%
  \g@addto@macro\emails{#2}%
}
\author{Jess Banks}
\author{Jorge Garza-Vargas}
\author{Satyaki Mukherjee}
\address{UC Berkeley}
\email{jess.m.banks@berkeley.edu, jgarzavargas@berkeley.edu, satyaki@berkeley.edu}
\begin{document}
\maketitle
\begin{abstract}
    For any multi-graph $G$ with edge weights and vertex potential, and its universal covering tree $\T$, we completely characterize the point spectrum of operators $\jacobi{\T}$ on $\T$ arising as pull-backs of local, self-adjoint operators $\jacobi{G}$ on $G$. This builds on work of Aomoto, and includes an alternative proof of the necessary condition for point spectrum derived in \cite{aomoto1991point}. Our result gives a finite time algorithm to compute the point spectrum of $\jacobi{\T}$ from the graph $G$, and additionally allows us to show that this point spectrum is contained in the spectrum of $\jacobi{G}$. Finally, we prove that typical pull-back operators have a spectral delocalization property: the set of edge weight and vertex potential parameters of $\jacobi{G}$ giving rise to $\jacobi{\T}$ with purely absolutely continuous spectrum is open and its complement has large codimension.
\end{abstract}

\section{Introduction}

Consider a finite graph $G=(V, E)$ and its universal cover $\T=(\mathcal{V}, \mathcal{E})$,  together with a covering map $\Xi: \T \to G$. The purpose of this paper is to relate the point spectrum of certain local, periodic,   self-adjoint operators on $\ell^2(\calV)$ to the combinatorial structure of $G$.

Precise definitions, notation and assumptions about the model in consideration will be discussed below in Section \ref{sec:prelims}, but for now we give a high-level overview of the problem setting. By endowing $G$ with edge weights and a potential on its vertex set, we obtain a natural self-adjoint operator $\jacobi{G}$ on $\ell^2(V)$. This framework encompasses Schr\"odinger operators on graphs, weighted adjacency matrices, graph Laplacians and transition matrices for random walks, and the corresponding pull-back of the weights and potential to $\T$ induces an analogous periodic, self-adjoint operator $\jacobi{\T}$ on $\ell^2(\calV)$.

The class of operators $A_\T$ obtained in this way contains, but is richer than, the periodic Schr\"{o}dinger operators in one dimension, which are of great relevance to spectral theory and the theory of orthogonal polynomials. The spectra of these $A_\T$ are additionally crucial to the study of relative expanders \cite{friedman2003relative} and, as shown in \cite{bordenave2019eigenvalues}, control in a strong sense the spectra of large random lifts of a fixed base graph. However, despite the many advances in functional analysis, operator algebras and operator theory, many natural questions regarding the spectral properties of $A_\T$ are unanswered and seem inaccessible with current techniques. We direct the reader to \cite{avni2020periodic} for a survey of both periodic Jacobi matrices and the difficulty in generalizing to the more generic case considered here.

In this paper we will be concerned with the spectrum of $\jacobi{\T}$, which we denote by $\Spec \jacobi{\T}$, the density of states $\mu$ (a natural and canonical measure on $\Spec \jacobi{\T}$), and most importantly those $\lambda \in \Spec \jacobi{\T}$ for which there exists a corresponding $\ell^2$ eigenvector---in other words, the point spectrum $\Spec_p \jacobi{\T}$.

Our main result is a set of necessary and sufficient condition on $G$ (including its edge weights and potential) for $\Spec_p \jacobi{\T}$ to be non-empty. This gives a finite algorithm to compute $\Spec_p \jacobi{\T}$ from $G$, and extends the work of Aomoto, who has already shown the necessary half of our result in \cite{aomoto1991point}. However, our new and elementary argument is essentially different from his, and we build on it to show, surprisingly, that $\Spec_p \jacobi{\T} \subset \Spec \jacobi{G}$, and to give a lower bound for the multiplicity of each eigenvalue  of $\jacobi{G}$ arising in this way. Finally, we prove that the set of edge weights and potentials for which $\jacobi{\T}$ has point spectrum is a closed semialgebraic set of large codimension, which in particular implies that the set has Lebesgue measure zero. This may be regarded as a spectral delocalization result of the kind long-studied in mathematical physics \cite{anantharaman2018delocalization}; see \cite{anantharaman2020absolutely} for recent and analogous work in the context of quantum graphs. In particular, our result implies that even when $  \Spec_p \jacobi{\T}$ has an isolated point, there are arbitrarily small perturbations of $\jacobi{\T}$ with no point spectrum at all. In view of the Kato-Rellich theorem on stability of the discrete point spectrum, this is a strong manifestation of the fact that the eigenspaces of $A_\T$ are infinite-dimensional. 

\subsection*{Related Work}

The operators $\jacobi{\T}$ defined here have been studied by several authors with different motivations and levels of generality, and are variously referred to as \textit{operators of nearest-neighbor type} \cite{aomoto1991point},  \textit{connected, local, pull-back operators} \cite{angel2015non} or \textit{periodic Jacobi operators} \cite{avni2020periodic}; we will use the latter. When $G$ is an unweighted $d$-regular graph (making $A_G$ is its adjacency matrix), classical work of Kesten in the context of Cayley graphs \cite{kesten1959symmetric}, and McKay in the context of random graphs \cite{mckay1981expected}, proved that $\Spec \jacobi{\T} = [-2\sqrt{d-1}, 2\sqrt{d-1}]$ and that $\mu$ follows what is now called the Kesten-Mackay distribution with parameter $d$. When $G$ is an unweighted $(c, d)$-bireguar bipartite graph with $c < d$, Godsil and Mohar showed that $\Spec \jacobi{\T} = \{\lambda \in \bbR: \sqrt{d-1}-\sqrt{c-1} \leq |\lambda|\leq \sqrt{c-1}+\sqrt{d-1} \}\cup \{0\}$ and that $\mu\{0\} = \frac{d-c}{d+c}$ \cite{godsil1988walk}. These results imply that for adjacency matrices, when $G$ is  $d$-regular, $\jacobi{\T}$ has no point spectrum, while when $G$ is $(c, d)$-biregular and bipartite,  $\Spec_p \jacobi{\T} =\{0\}$. 

Subsequent work focused on the properties of $\Spec \jacobi{\T}$ and $\mu$, and their relation to $\jacobi{G}$, without making any assumptions on $G$; see \cite{aomoto1988algebraic, aomoto1991point, sunada1992group, sy1992discrete} as well the more recent \cite{angel2015non, bordenave2019eigenvalues, avni2020periodic, vargas2019spectra}. Of relevance for the current paper is a result of  Avni, Breuer and Simon in \cite{avni2020periodic}, which states that for any $G$, any edge weights, and any potential, the operator $\jacobi{\T}$ has no singular continuous spectrum. As a corollary one can deduce that the continuous part of $\Spec \jacobi{\T}$ always consists of a finite union of closed non-degenerate intervals, and its singular part is the finite set of eigenvalues of $\Spec_p \jacobi{\T}$. Equivalently, $\mu$ can be decomposed into a measure that is absolutely continuous with respect to the Lebesgue measure on $\bbR$ and a finite sum of atomic measures. 

The most noteworthy prior result regarding the point spectrum of $\jacobi{\T}$ is the aforementioned work of Aomoto, who in addition to deriving necessary conditions for the presence of point spectrum of $\jacobi{\T}$ deduced a remarkable formula relating $\mu\{\lambda\}$ to the combinatorial structure of $G$ for every $\lambda \in \Spec_p \jacobi{\T}$. He then used these results to show that when $G$ is a $d$-regular graph, regardless of the edge weights and potential, $\jacobi{\T}$ has no point spectrum. This generalizes the case when $G$ is a cycle, which was established by different authors in the mathematical physics literature;  see Section 2 of \cite{avni2020periodic} for a discussion and survey. In a different context, Keller, Lenz and Warzel \cite{keller2013spectral} showed that adjacency matrices of certain trees have no point spectrum and that this property is stable under small perturbations of the potential. For our setting, their results imply that if $G$ has a loop at every vertex and $A_G$ is the adjacency matrix of $G$, then $A_\T$ has no point spectrum. 

Our results, stated in Section \ref{sec:mainresults} after the preliminary material in Section \ref{sec:prelims}, recover many of the ones above and provide a pleasant unification and generalization of the literature on point spectra.

\section{Preliminaries}
\label{sec:prelims}

\subsection{Graphs and Covers}
\label{subsec:graphsandcovers}

We will work in the general setting of weighted graphs with self-loops and multi-edges. In this setup we will regard a graph as a tuple $G = (V,E,a,b)$, consisting of vertices, edges, edge-weights $a : E \to \bbC$ and a potential $b : V \to \bbC$. When it is not clear from the context, we will write $V(G)$ and $E(G)$ to emphasize that we are referring to the set of vertices and edges of $G$.  It will be convenient to regard $E$ as a set of directed edges, equipped with a direction-reversing involution $e \mapsto \check e$ with no fixed points, as well as \textit{source} and \textit{terminal} maps $\sigma,\tau : E \to V$ so that $\sigma(e) = \tau(\check e)$ for every $e \in E$. An edge for which $\sigma(e) = \tau(e)$ and $e = \check e$ is a \textit{self-loop}, and we refer to the remainder as \textit{proper edges}.%
\footnote{
    Some authors additionally include so-called \textit{half-loops}, which are edges $e$ with $\sigma(e) = \tau(e)$ and $e = \check e$; see \cite{friedman1993}. Our results easily extend to this case, but for simplicity we will not consider it here.
}
We will also abuse notation and write $\sigma(u)$ and $\tau(u)$ for the sets of directed edges whose source and terminal, respectively, are the vertex $u \in V$. 

We say that a graph $H$ \textit{covers} $G$ if there exists a \textit{covering map} $\xi : H \to G$, namely a map of vertices and edges which is compatible with the source and terminal maps, preserves potential and edge weights, and is an isomorphism on $\sigma(u)$ and $\tau(u)$ for each vertex $u$. If both are finite, then $|V(G)|$ necessarily divides $|V(H)|$, and we call their ratio $n$ the degree of the cover; equivalently we say that $H$ is an 
\textit{$n$-lift} of $G$. Each $n$-lift $H$ may be expressed explicitly by an assignment of permutations to edges $\pi : E \to \frS_n$, with the property that $\pi_e^{-1} = \pi_{\check e}$ for each edge $e \in E$. Then $V(H) = V(G) \times [n]$---throughout the paper we will use the notation $[n] = \{1,...,n\}$---and for every $e \in E(G)$ and each $i \in [n]$, we include an edge $\tilde e \in E(H)$ with $\sigma(\tilde e) = (\sigma(e),i)$ and $\tau(\tilde e) = (\tau(e),\pi_e(i))$.

The \textit{universal cover} of a connected graph $G$ is the unique (up to isomorphism) infinite tree $\calT = (\calV,\calE,\a,\b)$ that covers every other cover of $G$. It can be constructed directly in terms of \textit{non-backtracking walks} on $G$, which are sequences of edges $e_1,e_2,...e_\ell$ such that, for every $s \in [\ell - 1]$, $\tau(e_s) = \sigma(e_{s+1})$ and $e_s \neq \check e_{s+1}$. If we choose a root vertex $u \in V$, then we may set the vertex set $\calV$ of $\calT$ to be the set of non-backtracking walks on $G$ starting at $u$, with directed edges $\calE$ whenever one walk is an immediate prefix or suffix of another, and edge weights and potential inherited from the final edge and vertex of the walk, respectively. Up to isomorphism $\calT$ is independent of the root choice, and is manifestly a cover of $G$; we will call the covering map $\Xi$. Finally, we note that $\calT$ is finite if and only if $G$ is \textit{acyclic}---that is, if it does not contain a closed non-backtracking walk. In this case $G = \calT$.

Given $G$ and a universal cover $\calT$, the latter is endowed with a set of symmetries which act transitively on $\calV$ by simultaneously permuting the fibres over every vertex.

\subsection{Jacobi Operators, Spectra, and the Density of States}
\label{subsec:jacobiops}
Following the convention introduced in \cite{avni2020periodic}, the \textit{Jacobi operator} associated to $G = (V,E,a,b)$ acts on $\eta \in \ell^2(V) \simeq \bbC^{|V|}$ as:
\begin{equation}
    \label{eq:jacobi-def}
    (\jacobi{G} \eta)(u) = b_u \eta(u) + \sum_{e \in \tau(u)} a_e \eta(\sigma(e)).
\end{equation}
Throughout the paper, we will assume that the edge weights satisfy a conjugate symmetry condition $\conj{a_e} = a_{\check e}$ and that the potential $b$ is real---these ensure that $\jacobi{G}$ is Hermitian, and we will accordingly call such edge weights and potential Hermitian as well. 

When $H$ is an $n$-lift of $G$, we will always think of $A_H$ as acting on $\ell^2(V)\otimes \bbC$, regarded as the set of $\bbC^n$-valued functions on $V$. A standard result characterizes the spectrum of $\jacobi{H}$. Let $\pi : E \to \frS_n$ be the set of permutations which define $H$, and overload notation to write $\pi_e$ as well for the unitary operator which acts by permuting the coordinates of $\bbC^n$ according to $\pi_e$. Then $\jacobi{H}$ acts on $\eta \in \ell^2(V) \otimes\bbC^n$ as
\begin{equation*}
    (\jacobi{H} \eta)(u) = b_u \eta(u) + \sum_{e \in \tau(u)} a_e \pi_e \eta(\sigma(e)) \in \bbC^n.
\end{equation*}

By writing $\bbC^n = \bbC^n_1 \oplus \bbC^n_0 \simeq \bbC \oplus \bbC^{n-1}$, where $\bbC^n_1$ is the span of the all-ones vector and $\bbC^n_0$ is its orthogonal complement, we can simultaneously decompose every edge permutation as $\pi_e = 1 \oplus \rho(\pi_e)$; the latter unitary operator on $\bbC^n_0 \simeq \bbC^{n-1}$ is the \textit{regular representation} of $\pi_e$. Thus we may write
\begin{equation}
    \jacobi{H} = \jacobi{G} \oplus \jacobi{H/G},
\end{equation}
where the second acts on $\eta \in \ell^2(V) \otimes \bbC^n_0$ in the natural way:
\begin{equation}
    (\jacobi{H/G} \eta)(u) = b_u \eta(u) + \sum_{e \in \tau(u)} a_e \rho(\pi_e) \eta(\sigma(e)) \in \bbC^{n}_0.
\end{equation}
In other words,
\begin{equation}
    \Spec \jacobi{H} = \Spec\jacobi{G} \sqcup \Spec\jacobi{H/G},
\end{equation}
and we refer to $\Spec \jacobi{H/G}$ as the \textit{new eigenvalues} of $H$.

Once again writing $\T = (\calV,\calE,\a,\b)$ for the universal cover of $G$, we will call the analogous operator on $\ell^2(\calV)$ the \textit{periodic Jacobi operator} of $\T$. Since the edge weights $\a$ and potential $\b$ are related to those of $G$ by $\a_{\tilde e} = a_{\Xi(\tilde e)}$ for every $\tilde e \in \calE$ and $\b_{\tilde v} = b_{\Xi(\tilde v)}$ for every $v \in \calV$, finiteness of $G$, $a$, and $b$ ensure that $\jacobi{\T}$ belongs to the set $\calB(\ell^2(\calV))$ of bounded operators on $\ell^2(\calV)$, and the inherited conjugate symmetry condition $\conj{\a_e} = \a_{\check e}$ guarantees that it is Hermitian. We use $\Spec A_\T$ to denote the \textit{spectrum} of the periodic Jacobi operator, that is
\begin{equation}
    \Spec \jacobi{\calT} = \left\{ \lambda \in \bbR: (\lambda - \jacobi{\calT})^{-1} \notin \calB(\ell^2(\calV)) \right\}.
\end{equation}
We remind the reader that, unlike in the finite dimensional case, $\lambda \in \Spec \jacobi{\calT}$ does not guarantee an $\ell^2$ eigenvector for $\lambda$. The subset of the spectrum with this additional property---the \textit{point spectrum}---will be our primary concern in this work. We will return to it below.

For any $u \in V(G)$, the quantities $\langle \delta_{\tilde u},  \jacobi{\calT}^\ell \delta_{\tilde u} \rangle$ for $\ell \in \bbN$ are real and constant over all $\tilde u$ in the fibre over $u$, and a routine application of the Riesz representation theorem guarantees an accompanying \textit{spectral measure} $\mu_u$ on $\Spec \jacobi{\calT}$ associated to each $u$, satisfying
\begin{align}
    \langle \delta_{\tilde u},  f(\jacobi{\calT}) \delta_{\tilde u}\rangle  = \int_{\Spec \jacobi{\calT}} f(x) \dee\mu_u(x) & & \forall \tilde u \in \Xi^{-1}(u)
\end{align}
for every bounded measurable function $f : \Spec \jacobi{\calT} \to \bbC$, where $f(A_\T)$ is defined via the Borel functional calculus. The \textit{density of states (DOS)} of $\jacobi{\calT}$ is the unique measure obtained by averaging these spectral measures over $u \in V(G)$:
\begin{equation}
    \mu = \frac{1}{|V(G)|}\sum_{u \in V(G)}\mu_u.
\end{equation}

It is typical in the literature to work with real positive edge weights instead of Hermitian ones. The latter choice will make some of our proofs more convenient, but from the perspective of $\Spec A_{\T}$ it does not add any generality. In particular, $A_{\T}$ is \textit{gauge invariant} in the following sense.

\begin{lemma}
    \label{lem:gauge}
    Let $G = (V,E,a,b)$ be a graph with Hermitian edge weights and real potential, and let $G' = (V,E,a',b)$, where $a'_e = |a_e|$; write $\T$ and $\T'$ for their respective covers. Then $A_\T = U^\ast A_{\T'} U$, where $U$ is a diagonal unitary operator.
\end{lemma}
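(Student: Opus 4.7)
The plan is to exploit the tree structure of $\calT$ to absorb the phases of the edge weights into a diagonal unitary, using a standard gauge transformation of the kind familiar from discrete magnetic Laplacians. Writing the Hermitian weights on $\calT$ in polar form, $\a_e = |\a_e|e^{i\theta_e}$ with $\theta_e \in \bbR$, the condition $\overline{\a_e} = \a_{\check e}$ forces $\theta_{\check e} = -\theta_e$; since $\calT$ is a tree it has no self-loops, so no further constraint is needed to make the $\theta_e$ well-defined.

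Next, I would fix a root $\tilde v_0 \in \calV$ and, for each $\tilde v \in \calV$, let $e_1,\dots,e_k$ be the unique non-backtracking walk on $\calT$ from $\tilde v_0$ to $\tilde v$. Setting
\[
    \omega(\tilde v) = -\sum_{s=1}^{k} \theta_{e_s}, \qquad \omega(\tilde v_0) = 0,
\]
I would verify the cocycle identity $\theta_e = \omega(\sigma(e)) - \omega(\tau(e))$ for every directed edge $e \in \calE$. This is the only step that exploits the tree structure of $\calT$: either the path to $\tau(e)$ extends the path to $\sigma(e)$ by $e$, or the path to $\sigma(e)$ extends the path to $\tau(e)$ by $\check e$, and in both cases the relation $\theta_{\check e} = -\theta_e$ yields the identity directly.

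With $\omega$ in hand, I would define $U \in \calB(\ell^2(\calV))$ as the diagonal unitary $(U\eta)(\tilde v) = e^{i\omega(\tilde v)}\eta(\tilde v)$, and confirm the conjugation formula by a direct calculation. For any $\eta \in \ell^2(\calV)$ and $\tilde u \in \calV$, unfolding the definitions and using that $\b$ is real and shared by $\calT$ and $\calT'$,
\[
    (U^\ast A_{\calT'} U \eta)(\tilde u) = \b_{\tilde u}\eta(\tilde u) + \sum_{e \in \tau(\tilde u)} |\a_e|\, e^{i(\omega(\sigma(e)) - \omega(\tilde u))}\, \eta(\sigma(e)).
\]
Since $\tau(e) = \tilde u$ in each summand, the cocycle identity collapses the coefficient to $|\a_e|e^{i\theta_e} = \a_e$, which matches the action of $A_\calT$ on $\eta$.

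There is no real obstacle beyond the sign bookkeeping; the only nontrivial input is the well-definedness of $\omega$, which depends crucially on $\calT$ being simply connected. On a graph with independent cycles the analogous sum of $\theta_e$ around a cycle can be nonzero, and this gauge transformation would fail — so the lemma genuinely relies on having passed to the universal cover.
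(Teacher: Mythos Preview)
Your proposal is correct and follows exactly the route the paper sketches: choose a root, define the diagonal unitary via the accumulated phases along the unique path from the root, and verify the conjugation identity directly. The paper only states this in one sentence (``letting $U_{v,v}$ be the product of edge weight arguments along the unique shortest path connecting $r$ and $v$''), and your write-up simply fills in the cocycle check and the sign bookkeeping carefully.
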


\noindent One may prove Lemma \ref{lem:gauge} by choosing a root $r$ for $\T$ and letting $U_{v,v}$ be the product of edge weight arguments along the unique shortest path connecting $r$ and $v$. An immediate corollary is that \textit{the spectrum and density of states of $\jacobi{\T}$ depend only on the moduli of the edge weights}. In the case $b \equiv 0$, note that the above implies that $A_{\mathcal{T}}$ and $-A_\mathcal{T}$ are unitarily equivalent,  which has the following consequence.

\begin{lemma}
\label{lem:symmetryofspectrum}
     Let $G$ be a finite graph, $\T$ its universal cover. If $b_v=0$ for all $v\in V(G)$ then the spectrum and density of states of $\jacobi{\T}$ are symmetric about zero. 
\end{lemma}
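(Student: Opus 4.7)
The plan is to exploit the fact that when $b \equiv 0$, flipping the sign of every edge weight negates the Jacobi operator while leaving all moduli unchanged. Gauge invariance (Lemma \ref{lem:gauge}) then identifies $A_{\T}$ with $-A_{\T}$ up to a diagonal unitary, and both claims fall out. Concretely, I would introduce the companion graph $G' = (V, E, -a, 0)$ alongside $G = (V, E, a, 0)$. Since $|-a_e| = |a_e|$ for every edge, Lemma \ref{lem:gauge} applied to each of $G$ and $G'$ (against the graph $\tilde G = (V,E,|a|,0)$ with positive weights) gives diagonal unitaries $U$ and $V$ with $A_{\T} = U^\ast A_{\tilde \T} U$ and $A_{\T'} = V^\ast A_{\tilde \T} V$. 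Composing, $W A_{\T} W^\ast = A_{\T'}$ for the diagonal unitary $W := V^\ast U$.

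Next I would observe that because the potential vanishes, the definition of the Jacobi operator in \eqref{eq:jacobi-def} yields $A_{\T'} = -A_{\T}$ directly from $\a'_{\tilde e} = -\a_{\tilde e}$. Combining with the previous step, $W A_{\T} W^\ast = -A_{\T}$, so $A_{\T}$ and $-A_{\T}$ are unitarily equivalent. Taking spectra on both sides gives $\Spec A_{\T} = -\Spec A_{\T}$, which is exactly symmetry of the spectrum about zero.

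For the density of states I would use the diagonal nature of $W$: for each $\tilde u \in \calV$, we have $W \delta_{\tilde u} = c_{\tilde u} \delta_{\tilde u}$ with $|c_{\tilde u}| = 1$. Borel functional calculus then yields, for any bounded measurable $f$,
\begin{equation*}
    \langle \delta_{\tilde u}, f(-A_{\T}) \delta_{\tilde u}\rangle
    = \langle \delta_{\tilde u}, W f(A_{\T}) W^\ast \delta_{\tilde u}\rangle
    = |c_{\tilde u}|^2 \langle \delta_{\tilde u}, f(A_{\T}) \delta_{\tilde u}\rangle
    = \langle \delta_{\tilde u}, f(A_{\T}) \delta_{\tilde u}\rangle.
\end{equation*}
Since the left-hand side equals $\int f(-x) \dee\mu_u(x)$ and the right equals $\int f(x) \dee\mu_u(x)$, each spectral measure $\mu_u$ is invariant under $x \mapsto -x$. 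Averaging over $u \in V(G)$ transfers this symmetry to $\mu$.

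I do not expect a serious obstacle: the gauge lemma is doing the real work, and the only subtlety is tracking that the unitary $W$ can indeed be chosen diagonal (so that it scales each $\delta_{\tilde u}$ by a unit scalar and thus preserves the diagonal matrix elements that define $\mu_u$). If instead one only had a generic unitary equivalence, the spectrum would still be symmetric but the density of states would not obviously be; the diagonal structure inherited from Lemma \ref{lem:gauge} is what closes that gap.
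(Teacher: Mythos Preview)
Your proof is correct and follows essentially the same approach as the paper: the paper observes (immediately before the lemma) that gauge invariance makes the spectrum and density of states depend only on the moduli $|a_e|$, so with $b\equiv 0$ the operator $A_\T$ is unitarily equivalent to $-A_\T$ via a diagonal unitary, from which both symmetries follow. Your write-up simply makes explicit the intermediate graph $\tilde G$ with weights $|a|$, the composition $W = V^\ast U$, and the functional-calculus computation showing each $\mu_u$ (not just their average) is symmetric---all of which the paper leaves implicit.
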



On several occasions we will use the following well-known facts to relate the empirical spectral measures of finite graphs $G$ to the densities of states of their universal covers. 

\begin{lemma}
    \label{lem:lifts-weak-conv}
    Let $G$ be a finite graph, $\T$ its universal cover, and $\mu$ the density of states of $\jacobi{\T}$. There exists a sequence of covers $G_n$ of $G$ whose girth%
    \footnote{The girth of a graph is the length of its shortest cycle.} 
    diverges as $n$ goes to infinity.  Moreover, for this sequence, the empirical spectral measures $\mu_n$ of $\jacobi{G_n}$ converges weakly to $\mu$.
\end{lemma}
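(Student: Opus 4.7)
The proposal has two essentially independent ingredients: constructing covers $G_n$ of $G$ with diverging girth, and then proving the weak convergence $\mu_n \to \mu$. For the former, a clean approach is to exploit that the fundamental group of $G$ is free, hence residually finite: for any fixed $k$ only finitely many non-trivial elements of the fundamental group have word length $\le k$, and a residually finite group admits a finite quotient in which all of them become non-trivial simultaneously. The corresponding finite cover of $G$ has girth exceeding $k$. Alternatively, one can appeal to random $n$-lifts (Amit--Linial, Friedman), which have girth $\Omega(\log n)$ with positive probability by a first-moment bound: sampling the permutations $\pi_e \in \frS_n$ defining the lift independently and uniformly, the expected number of $\ell$-cycles in the lift is $O(1)$ for each fixed $\ell$, since producing one requires $\ell$ independent coincidences of uniform permutations.

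For the convergence step I would use the method of moments. Since $\|\jacobi{H}\|$ and $\|\jacobi{\T}\|$ are uniformly bounded by $C := \max_v(|b_v| + \sum_{e \in \tau(v)} |a_e|)$ for every cover $H$ of $G$, the measures $\mu_n$ and $\mu$ are all supported in $[-C, C]$. In this compactly supported regime weak convergence is equivalent to convergence of every moment, so it suffices to show that for each fixed $k$, $\int x^k \dee\mu_n \to \int x^k \dee\mu$. I claim the stronger statement that these moments agree exactly whenever $\mathrm{girth}(G_n) > k$.

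Indeed,
\begin{equation*}
    \int x^k \dee\mu_n(x) = \frac{1}{|V(G_n)|} \sum_{\tilde v \in V(G_n)} \langle \delta_{\tilde v}, \jacobi{G_n}^k \delta_{\tilde v}\rangle,
\end{equation*}
and the diagonal entry expands as a weighted sum over length-$k$ walks on $G_n$ starting and ending at $\tilde v$, in which each step contributes either a potential factor $b_u$ (remaining at $u$) or an edge factor $a_e$ (traversing $e$). Each such walk is supported on a connected subgraph of $G_n$ with at most $k$ edges, which contains no cycle---and is therefore a tree---once $\mathrm{girth}(G_n) > k$. Since $\T$ is also the universal cover of $G_n$, any preimage $\hat v \in \T$ of $\tilde v$ under the cover map $\T \to G_n$ admits a unique edge-by-edge lift of such a walk, and the tree-support forces the lifted walk to close up at $\hat v$. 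This yields a weight-preserving bijection between length-$k$ closed walks at $\tilde v$ in $G_n$ and at $\hat v$ in $\T$, so $\langle \delta_{\tilde v}, \jacobi{G_n}^k \delta_{\tilde v}\rangle = \langle \delta_{\hat v}, \jacobi{\T}^k \delta_{\hat v}\rangle = \int x^k \dee\mu_{\Xi(\hat v)}$; summing over $\tilde v$ and grouping by fibres over $V(G)$ yields $\int x^k \dee\mu_n = \int x^k \dee\mu$. The main technical care lies in setting up the walk-lifting bijection and in handling the mixture of potential and edge steps cleanly; the existence of high-girth covers, while non-trivial, is standard.
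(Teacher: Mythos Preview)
Your proposal is correct. The weak-convergence half is essentially identical to the paper's argument: both use the method of moments, interpret diagonal entries of $\jacobi{G_n}^k$ as weighted closed-walk counts, and observe that once $\mathrm{girth}(G_n) > k$ these counts match those in $\T$.

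The construction of high-girth covers differs. The paper gives a self-contained combinatorial construction: given $H$ with girth $p$ and $m$ undirected edges, it builds a $2^{m+1}$-lift by assigning to the $i$th edge the cyclic shift by $2^i \bmod 2^{m+1}$; any putative $p$-cycle in the lift projects to a $p$-cycle in $H$, hence uses distinct edges, and the net shift $\pm 2^{i_1} \pm \cdots \pm 2^{i_p}$ cannot vanish modulo $2^{m+1}$. Iterating gives diverging girth. Your two suggested routes---residual finiteness of the free fundamental group, or a first-moment bound for random lifts---are both standard and valid; indeed the paper itself sketches the residual-finiteness approach (via normal subgroups of $\mathbf{F}_l$, citing Avni--Breuer--Simon) just before presenting its explicit construction. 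The paper's construction has the virtue of being entirely elementary and explicit, requiring no group theory or probability; your approaches are shorter to state but import more machinery.
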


This lemma  follows directly  from results stated in \cite{avni2020periodic} whose proofs will appear in \cite{boundaryconditions2020}. Here we discuss very briefly their approach and refer the reader to Section 4 of \cite{avni2020periodic} for details. Begin by fixing a spanning tree $T$ of $G$ and define $l = |E(G)\setminus E(T)|$. Then, if $G$ is viewed as a 1-complex, the fundamental group of $G$ is the free group on $l$ generators, namely, $\textbf{F}_l$. Moreover, by contracting the lifts of $T$ in $\T$ one obtains the Cayley graph of $\textbf{F}_l$ and this allows to define an action of $\textbf{F}_l$ on $\T$ which has the copies of $T$ as fundamental domains. Subgroups of $\textbf{F}_l$ of finite index correspond to the lifts of $G$, and those lifts coming from normal subgroups enjoy a certain type of symmetry. Theorem 4.3 in \cite{avni2020periodic} says that one can take a sequence of normal subgroups of $\textbf{F}_l$ of finite index, which give rise to a sequence of lifts of $G$, say $G_1, G_2, \dots$, with the property that the empirical spectral distributions of $A_{G_n}$ converges weakly to $\mu$. It is not hard to see from their discussion that the symmetric property of the $G_n$ together with the weak convergence of the spectral distributions,  implies that this sequence of graphs has diverging girth.  

For completeness, below we provide a purely combinatorial proof of Lemma \ref{lem:lifts-weak-conv}. 

\begin{proof}[Proof of Lemma \ref{lem:lifts-weak-conv}]
    By induction, it suffices to show that for every finite graph $H = (V,E)$ with $\girth (H) = p$ and $|E| = 2m$, there exists a finite lift $L$ of $H$ whose girth is strictly larger (the weights and potential are irrelevant here, and we will suppress them). We will construct $L$ as a $2^{m+1}$-lift of $H$, with the following set of permutations $\pi : E \to \frS_{2^{m} + 1}$. Group the edges in to pairs $(e,\check e)$ consisting of an edge and its reversal, and order these $(e_1,\check e_1),...,(e_m, \check e_m)$. Now let $\pi_{e_i}$ be the permutation that maps $j \mapsto j + 2^i \mod 2^{m + 1}$ for every $j \in [2^{m + 1}]$, and let $\pi_{\check e_i} = \pi_{e_i}^{-1}$ as required.
    
    Since $\girth L > \girth H$, we need only to show that $L$ contains no cycle of length $p$. Seeking contradiction, assume instead that $e_{i_1},...,e_{i_p}$ is a sequence of $p$ directed edges forming a cycle in $L$. Writing $\xi$ for the covering map, and $\xi(e_{i_1}),...,\xi(e_{i_p})$ form a cycle in $H$ with length $p$, and since $\girth H = p$, they are distinct. The vertices of $L$ are $V \times [2^{m+1}]$, which we regard as a set of $2^{m+1}$ `layers;' assume $\sigma(e_{i_1})$ is in the $t$th one. Because of how we have arranged the permutations, $\tau(e_{i_p})$ is in layer $t \pm 2^{i_1} \pm \cdots \pm 2^{i_p} \neq t \mod 2^{m+1}$, because the $i_1,...,i_p$ are distinct and smaller than $m+1$. Thus $\tau(e_{i_p}) \neq \sigma(e_{i_1})$---a contradiction.

    We finally show that, given such a sequence $G_n$ with diverging girth, $\mu_n$ converges weakly to $\mu$. For every fixed positive integer $k$ and each vertex $u$ of $G_n$, the quantity $\langle \delta_u, \jacobi{G_n}^k \delta_u\rangle$ is a weighted count of length-$k$ closed walks in $G_n$ starting and ending at $u$. Since the $G_n$ have diverging girth, for $n$ sufficiently large the depth-$k$ neighborhood of $u$  in $G_n$ is identical to that of every $\tilde u \in \Xi^{-1}(u)$, and thus this count is eventually constant and equal to $\langle \delta_{\tilde u}, \jacobi{\T}^k \delta_{\tilde u} \rangle$. Finally, as the $k$th moments of the empirical spectral measures $\mu_n$ are given by normalized traces of $\jacobi{G_n}^k$, the method of moments gives us weak convergence to the density of states.
\end{proof}

Substantially stronger versions of this result are known but will not be necessary for us; we direct the reader for instance to the recent work of Bordenave and Collins \cite{bordenave2019eigenvalues}.

\subsection{Point Spectrum and the Aomoto Sets}
\label{subsec:aomotosets}

We will denote the point spectrum of $\jacobi{\calT}$ as
\begin{equation}
    \Spec_p \jacobi{\calT} = \left\{ \lambda \in \bbR :  \Ker (\lambda - \jacobi{\calT}) \neq \{0\} \right\}.
\end{equation}
The following proposition collates several equivalent characterizations of $\Spec_p \jacobi{\calT}$. 

\begin{proposition}
\label{prop:equivallencesforeigs}
    Let $G$ be a finite graph. Assume $G$ has  at least one cycle, and let $\calT$ be its universal cover. Then $\lambda \in \Spec_p \jacobi{\calT}$ if and only if any of the following hold:
    \begin{enumerate}[(i)]
        \item \label{prop:equivallencesforeigs-i}  $\dim \Ker(\lambda - \jacobi{\T}) = \infty$
        \item \label{prop:equivallencesforeigs-ii} $\lambda$ is an atom of $\mu$
        \item \label{prop:equivallencesforeigs-iii} For some $u \in V(G)$, $\lambda$ is an atom of $\mu_u$.
        \item \label{prop:equivallencesforeigs-iv} For some $u \in V(G)$, the Cauchy transform
        $$
            S_u(z) = \int_{\Spec \jacobi{\calT}} (z - x)^{-1} \dee \mu_u(x)
        $$
        has a pole at $\lambda$.
        \item \label{prop:equivallencesforeigs-v} For some $u \in V(G)$, and every $\tilde u \in \Xi^{-1}(u)$, there exists $\zeta \in \Ker(\lambda - \jacobi{\calT})$ with $\zeta(\tilde u) \neq 0$.
    \end{enumerate}
    Moreover, the vertices satisfying (\ref{prop:equivallencesforeigs-iii}),(\ref{prop:equivallencesforeigs-iv}), and (\ref{prop:equivallencesforeigs-v}) coincide.
\end{proposition}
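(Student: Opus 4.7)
The plan rests on two structural observations. First, the deck transformation group $\Gamma$ of the cover $\Xi : \T \to G$ acts on $\calV$ freely, transitively on each fiber $\Xi^{-1}(u)$, and commutes with $\jacobi{\T}$; consequently the spectral measures $\mu_{\tilde u}$ depend only on $\Xi(\tilde u)$, so we may identify the $\mu_u$ of the statement with $\mu_{\tilde u}$ for any $\tilde u$ in the fiber over $u$. Second, since $G$ has a cycle, $\T$ is infinite and each fiber $\Xi^{-1}(u)$ is infinite. I will also use repeatedly the spectral projection $P_\lambda := \chi_{\{\lambda\}}(\jacobi{\T})$ together with the standard identity $\mu_{\tilde u}(\{\lambda\}) = \langle \delta_{\tilde u}, P_\lambda \delta_{\tilde u}\rangle = \|P_\lambda \delta_{\tilde u}\|^2$.

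With these in hand, the equivalences (\ref{prop:equivallencesforeigs-ii}) $\Leftrightarrow$ (\ref{prop:equivallencesforeigs-iii}) $\Leftrightarrow$ (\ref{prop:equivallencesforeigs-iv}) become formal. Since $\mu = |V(G)|^{-1}\sum_u \mu_u$ is a convex combination of nonnegative measures, $\mu(\{\lambda\}) > 0$ iff $\mu_u(\{\lambda\}) > 0$ for some $u$, which gives (\ref{prop:equivallencesforeigs-ii}) $\Leftrightarrow$ (\ref{prop:equivallencesforeigs-iii}). For (\ref{prop:equivallencesforeigs-iii}) $\Leftrightarrow$ (\ref{prop:equivallencesforeigs-iv}), the Cauchy transform $S_u$ of the finite positive measure $\mu_u$ is meromorphic off $\supp \mu_u$ with a simple pole at $\lambda$ of residue $-\mu_u(\{\lambda\})$, so it has a pole exactly when $\mu_u$ has an atom, and the witnessing vertex $u$ is preserved.

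To link these to $\Spec_p \jacobi{\T}$ and condition (\ref{prop:equivallencesforeigs-v}), I would run the projection argument in both directions. If $\mu_u(\{\lambda\}) > 0$, then for every $\tilde u \in \Xi^{-1}(u)$ the vector $\zeta := P_\lambda \delta_{\tilde u}$ lies in $\Ker(\lambda - \jacobi{\T})$ with $\zeta(\tilde u) = \mu_u(\{\lambda\}) \neq 0$, giving $\lambda \in \Spec_p \jacobi{\T}$ and (\ref{prop:equivallencesforeigs-v}) at $u$. Conversely, if some $\ell^2$ eigenvector $\zeta$ has $\zeta(\tilde u) \neq 0$, then $\mu_{\Xi(\tilde u)}(\{\lambda\}) \geq |\zeta(\tilde u)|^2/\|\zeta\|^2 > 0$, forcing (\ref{prop:equivallencesforeigs-iii}) at $u = \Xi(\tilde u)$. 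The same vertex $u$ therefore witnesses (\ref{prop:equivallencesforeigs-iii})--(\ref{prop:equivallencesforeigs-v}) simultaneously.

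Finally, the equivalence with (\ref{prop:equivallencesforeigs-i}) is where the cycle hypothesis does the real work. One direction is trivial; for the other, if (\ref{prop:equivallencesforeigs-iii}) holds at some $u$, then computing the trace of the self-adjoint projection $P_\lambda$ in the canonical basis yields
\begin{equation*}
    \dim \Ker(\lambda - \jacobi{\T}) = \mathrm{tr}(P_\lambda) = \sum_{\tilde v \in \calV} \mu_{\Xi(\tilde v)}(\{\lambda\}) \geq |\Xi^{-1}(u)| \cdot \mu_u(\{\lambda\}) = \infty.
\end{equation*}
There is no serious obstacle: once the deck-group identification $\mu_{\tilde u} = \mu_{\Xi(\tilde u)}$ is in place, the proposition reduces to bookkeeping around the spectral projection. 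The only point requiring modest care is tracking that the same $u$ witnesses (\ref{prop:equivallencesforeigs-iii})--(\ref{prop:equivallencesforeigs-v}), which the projection-based arguments above make automatic.
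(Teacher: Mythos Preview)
Your argument is correct and, for the equivalences among $\lambda\in\Spec_p\jacobi{\T}$ and (\ref{prop:equivallencesforeigs-ii})--(\ref{prop:equivallencesforeigs-v}), follows the same spectral-projection framework the paper invokes (the paper packages the key identity $\mu_u\{\lambda\}=\|P_\lambda\delta_{\tilde u}\|^2$ into its Lemma~\ref{lem:atom-basis} and otherwise just declares these equivalences standard). Two small cosmetic points: $S_u$ is holomorphic, not meromorphic, off $\supp\mu_u$, and the nontangential limit of $(z-\lambda)S_u(z)$ is $+\mu_u\{\lambda\}$ rather than $-\mu_u\{\lambda\}$; neither affects the logic.

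The one place you genuinely diverge is (\ref{prop:equivallencesforeigs-i}). The paper cites \cite{avni2020periodic}, where infinitude of the eigenspace is obtained by precomposing a single eigenvector with infinitely many deck transformations and arguing linear independence. Your trace computation $\dim\Ker(\lambda-\jacobi{\T})=\mathrm{tr}\,P_\lambda=\sum_{\tilde v}\mu_{\Xi(\tilde v)}\{\lambda\}\geq |\Xi^{-1}(u)|\cdot\mu_u\{\lambda\}=\infty$ is a cleaner and more self-contained route: it avoids having to verify linear independence of the translated eigenvectors and uses only that fibers are infinite together with the identity you already established. The deck-transformation proof, on the other hand, produces explicit eigenvectors rather than just a dimension count, which is occasionally useful elsewhere.
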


\begin{proof}
In Section 8 of \cite{avni2020periodic} it is proven that $\lambda \in \Spec_p A_\T$ implies \eqref{prop:equivallencesforeigs-i}.  The intuition behind this result is that if $\xi\in \Ker(\lambda -A_\T)$, then precomposing $\xi$ with deck transformations on $\T$ gives rise to many linearly independent eigenvectors of $A_\T$ for $\lambda$.  

The rest of the claims are standard results that are true for general self-adjoint bounded operators. The fact that $\lambda\in \Spec_p A_\T$, \eqref{prop:equivallencesforeigs-ii}, \eqref{prop:equivallencesforeigs-iii}, and \eqref{prop:equivallencesforeigs-v} are equivalent follows directly from the definition of $\mu$ and the forthcoming Lemma \ref{lem:atom-basis}. On the other hand \eqref{prop:equivallencesforeigs-iii} and \eqref{prop:equivallencesforeigs-iv} are clearly equivalent. 
\end{proof}

\noindent  By way of a complicated set of coupled equations satisfied by the Cauchy transforms $S_u$, Aomoto identified a set of vertices of $G$ whose combinatorial structure is instrumental in understanding $\Spec_p A_\T $ and will be the focus of much of this paper.

\begin{definition}[The Aomoto set]
Let $G$ be a finite graph and assume that $\lambda \in \Spec_p A_\T $. The \textit{Aomoto set} of $G$ associated to $\lambda$ consists of those vertices in $V(G)$ that satisfy the equivalent conditions (\ref{prop:equivallencesforeigs-iii}-\ref{prop:equivallencesforeigs-v}) in Proposition \ref{prop:equivallencesforeigs}. This set will be denoted by $X_\lambda(G)$.\footnote{This set was referred to as $X_{\lambda}^{(1)}(G)$ in \cite{aomoto1991point} and \cite{avni2020periodic}; we have dropped the superscript to lighten notation, and because we will not consider the sets $X_{\lambda}^{(\alpha)}(G)$ for $\alpha \neq 1$ which appear in that work.} 
\end{definition}

\begin{figure}[h]
   
\centering
     \begin{tabular}{c c c}
        \includegraphics[scale=.75]{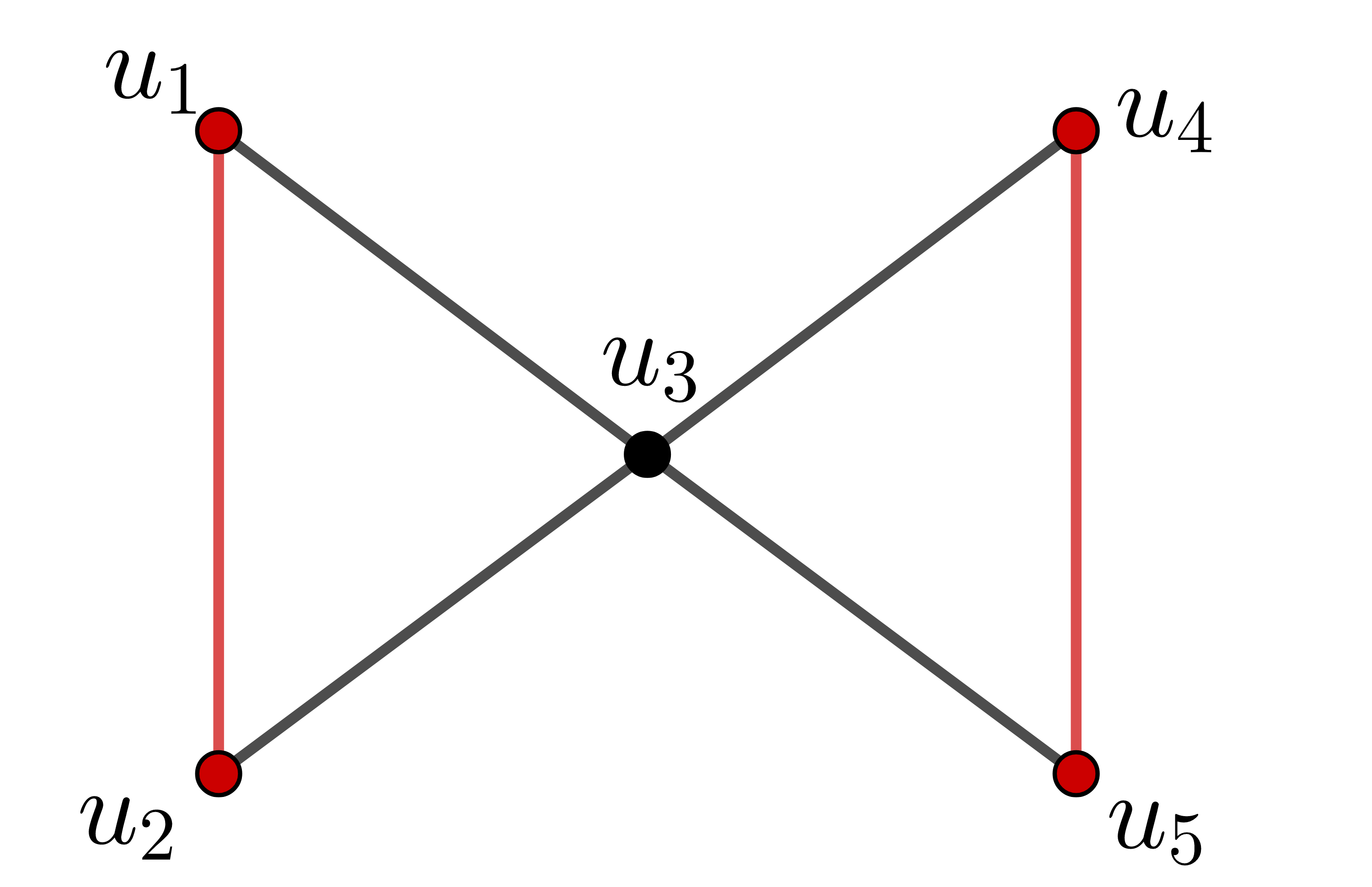} \quad &
         \includegraphics[scale =.195]{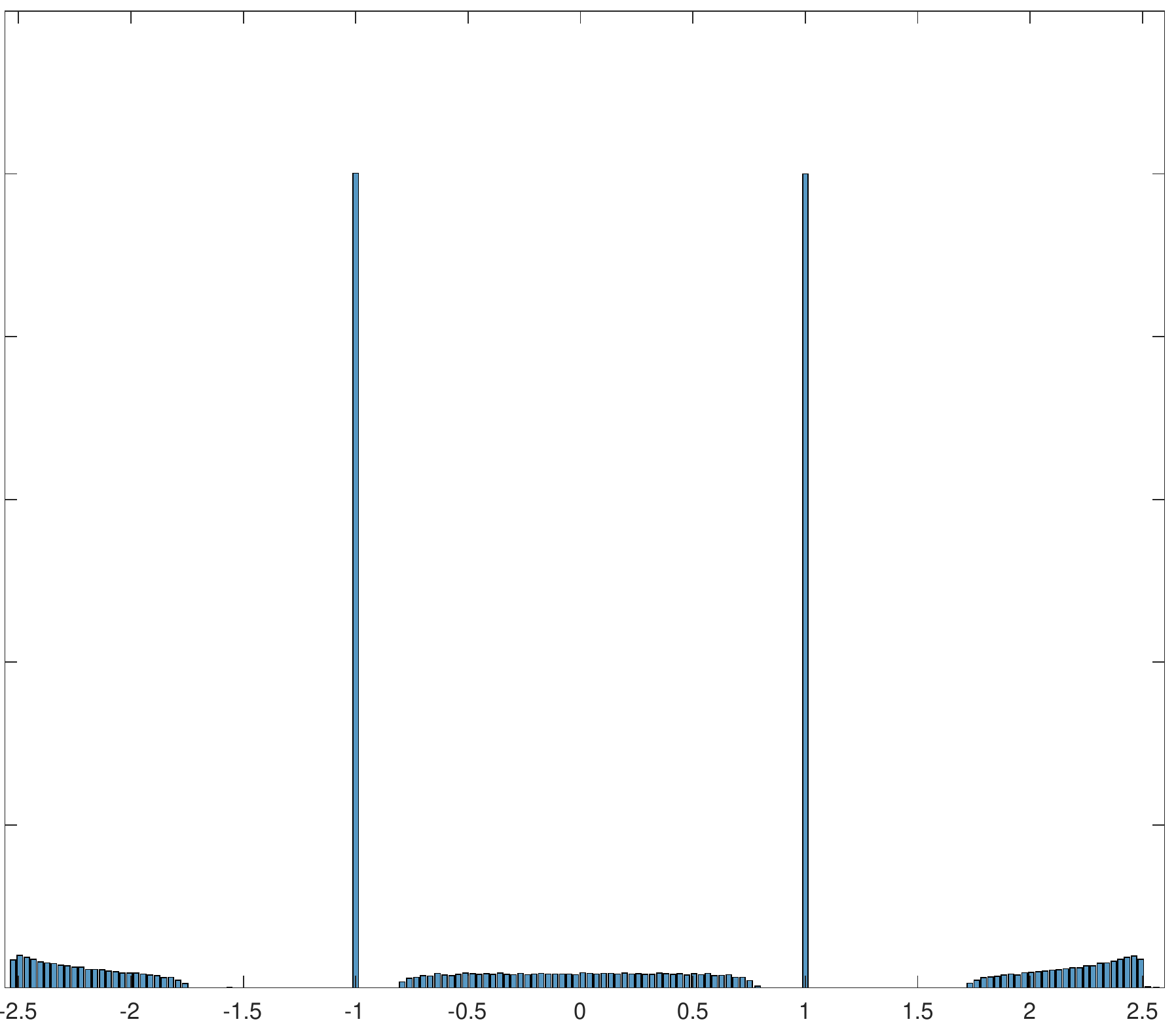} \hspace{.6cm} & \includegraphics[scale =.195]{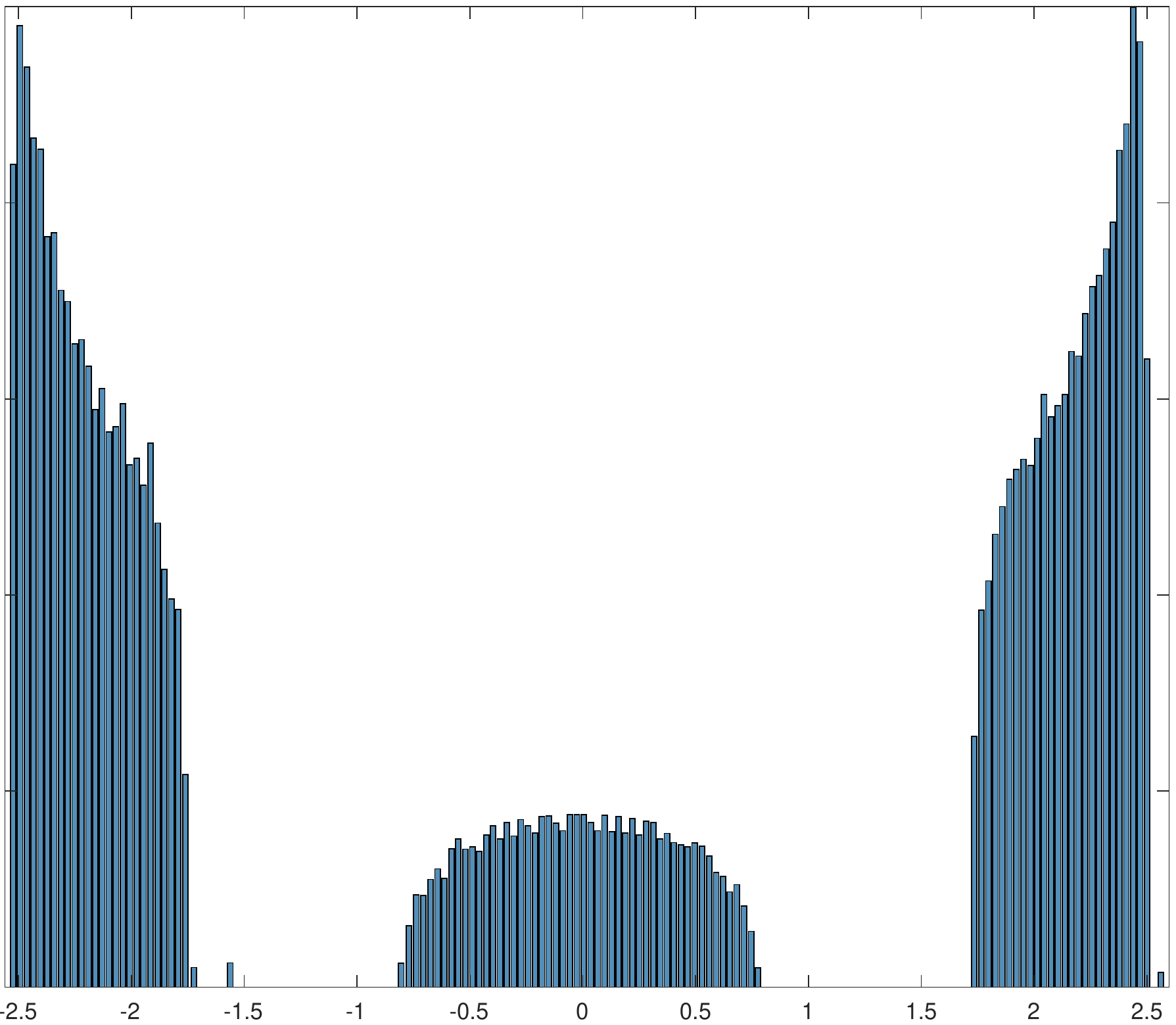}
         \\ $G$ & $\mu_{u_1}=\mu_{u_2}=\mu_{u_4}=\mu_{u_5}$ \hspace{.6cm} & $\mu_{u_3}$
         \end{tabular}
         
 \caption{On the left a finite graph $G$, where the vertices in $X_1(G)=X_{-1}(G)$ are colored in red.  In the middle and on the right we show the two distinct spectral measures of $A_\T$ associated to the vertices of $G$.  }
    \label{fig:spectralmeasures}
\end{figure}

\begin{example}
In Figure \ref{fig:spectralmeasures}, $G$ is a finite graph with $a_e\equiv 1$ and $b_v \equiv 0$ for every $e\in E(G)$ and $v\in V(G)$. By the symmetries in $G$, the spectral measures corresponding to the vertices $u_1, u_2, u_4$ and $u_5$ are equal. Hence, $A_\T$ has only two distinct spectral measures associated to the vertices of $G$, sketched in Figure \ref{fig:spectralmeasures}. These sketches were generated by taking a random lift of $G$ of degree 1200 and by plotting the weighted histogram for the corresponding discrete spectral measures. As the figures show, the spectral measures corresponding to  $u_1, u_2, u_4$ and $u_5$ have  atoms  at $-1$ and 1, while the spectral measure corresponding to $u_3$ is absolutely continuous with respect to the Lebesgue measure on $\mathbb{R}$. Hence, $X_{-1}(G)=X_1(G)= \{u_1, u_2, u_4, u_5\}$. Note that the subgraph induced by $X_1(G)$ consists of two disconnected trees; later in Theorem \ref{thm:aomoto} we will show that this a general property of Aomoto sets. 
\end{example}

We will use repeatedly an equivalent form of Proposition \ref{prop:equivallencesforeigs} \eqref{prop:equivallencesforeigs-v} above: \emph{if $u \notin \aom{\lambda}{G}$, then any eigenvector $\eta \in \Ker(\lambda - \jacobi{\T})$ is identically zero on the fibre over $u$}. We will also require a standard identity expressing the mass assigned to $\lambda \in \Spec_p \jacobi{\T}$ by the spectral measure $\mu_u$.

\begin{lemma}
    \label{lem:atom-basis}
    Let $G$ be a finite graph, $\T$ be its universal cover, and $\lambda \in \Spec_p \jacobi{\T}$. Then if $\frB$ is any orthonormal basis for $\Ker(\lambda - \jacobi{\T})$, for any $u\in V$ and $\tilde u \in \Xi^{-1}(u)$,
    \begin{equation}
        \label{eq:mass-basis}
        \mu_u\{\lambda\} = \sum_{\eta \in \frB} |\eta(\tilde u)|^2.
    \end{equation}
\end{lemma}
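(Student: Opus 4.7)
The plan is to recognize that this identity is the standard spectral-theoretic formula for the mass a spectral measure assigns to an eigenvalue, and to unwind it via the Borel functional calculus followed by Parseval's identity applied in the eigenspace.

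First, I would apply the defining property of the spectral measure $\mu_u$ to the bounded measurable function $f = \mathbb{1}_{\{\lambda\}}$, which gives
$$
\mu_u\{\lambda\} = \int_{\Spec \jacobi{\T}} \mathbb{1}_{\{\lambda\}}(x)\, d\mu_u(x) = \langle \delta_{\tilde u},\, \mathbb{1}_{\{\lambda\}}(\jacobi{\T})\, \delta_{\tilde u} \rangle
$$
for any $\tilde u \in \Xi^{-1}(u)$. The next step is to identify $\mathbb{1}_{\{\lambda\}}(\jacobi{\T})$ with the orthogonal projection $P_\lambda$ onto $\Ker(\lambda - \jacobi{\T})$: since $\jacobi{\T}$ is bounded and self-adjoint and $\lambda \in \Spec_p \jacobi{\T}$, the spectral theorem says that the associated projection-valued measure $E$ satisfies $E(\{\lambda\}) = P_\lambda$. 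Therefore $\mu_u\{\lambda\} = \langle \delta_{\tilde u}, P_\lambda \delta_{\tilde u}\rangle = \|P_\lambda \delta_{\tilde u}\|^2$.

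Finally, since $P_\lambda$ is the orthogonal projection onto the closed subspace $\Ker(\lambda - \jacobi{\T})$ of $\ell^2(\calV)$, expanding $P_\lambda \delta_{\tilde u}$ in the orthonormal basis $\frB$ and invoking Parseval yields
$$
\|P_\lambda \delta_{\tilde u}\|^2 = \sum_{\eta \in \frB} |\langle \eta, \delta_{\tilde u}\rangle|^2 = \sum_{\eta \in \frB} |\eta(\tilde u)|^2,
$$
which is the claimed identity. The only mildly delicate point is that by Proposition \ref{prop:equivallencesforeigs}\eqref{prop:equivallencesforeigs-i} the basis $\frB$ is necessarily infinite, so one should note that Parseval's identity in the separable Hilbert space $\Ker(\lambda - \jacobi{\T})$ gives unconditional convergence, with the series bounded above by $\|\delta_{\tilde u}\|^2 = 1$. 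There is no genuine obstacle here; the result is essentially routine once one has the definition of $\mu_u$ via the Borel functional calculus in hand.
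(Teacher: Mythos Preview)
Your proposal is correct and follows essentially the same approach as the paper: the paper's proof is a one-line remark that the identity is a standard application of the Borel functional calculus, the key point being that $\mathbb{1}_{\{\lambda\}}(\jacobi{\T})$ is the orthogonal projection onto $\Ker(\lambda - \jacobi{\T})$. Your write-up simply makes explicit the Parseval step that the paper leaves to the reader.
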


\noindent Equation \eqref{eq:mass-basis} follows from a standard application of the Borel functional calculus, where the key observation is that if $f_\lambda :\Spec A_\T\to \bbR$ is the indicator function of the singleton $\{\lambda\}$ then $f_{\lambda}(A_\T)$ is the orthogonal projection onto $\ker(\lambda -A_\T)$. 

\section{Main Results}
\label{sec:mainresults}

Our first contribution is to strengthen a result of Aomoto \cite{aomoto1991point}, by way of a new and more conceptual proof. This result characterizes the induced subgraph on $\aomoto{\lambda}{G}$ for any $\lambda \in \Spec_p \jacobi{\calT}$, and relate the mass $\mu\{\lambda\}$ to the local structure of this subgraph and neighboring vertices. Let us write $\partial \aomoto{\lambda}{G}$ for the set of vertices outside the Aomoto set but connected to it by an edge, $\cc \aom{\lambda}{G}$ for the number of connected components of the subgraph induced by $\aom{\lambda}{G}$, and define the \textit{index} of $\lambda$ as
\begin{equation}
    \label{eq:index}
    \ind{\lambda}{G} = \cc \aom{\lambda}{G} - |\partial \aom{\lambda}{G}|.
\end{equation}
\noindent Recall that for us a graph $G = (V,E,a,b)$ contains vertices $V$, directed edges $E$, Hermitian edge weights $a : E \to \bbC$ satisfying $\conj{a_e} = a_{\check e}$ and real potential $b : V \to \bbR$.

\begin{theorem}
    \label{thm:aomoto}
    Let $G$ be a finite graph, $\T$ be its universal cover, and $\lambda \in \Spec_p \jacobi{\T}$. Then:
    \begin{enumerate}[(i)]
        \item \label{thm:aomoto-i} The subgraph induced by $\aom{\lambda}{G}$ is acyclic,
        \item \label{thm:aomoto-ii} $\lambda$ is an eigenvalue, with multiplicity one, of the induced Jacobi operator of each connected component of this subgraph, and
        \item \label{thm:aomoto-iii} The density of states of $\jacobi{\T}$ satisfies
        \begin{equation}
            \label{eq:aom-formula}
            \mu\{\lambda\} = \frac{\ind{\lambda}{G}}{|V(G)|}
        \end{equation}
    \end{enumerate}
\end{theorem}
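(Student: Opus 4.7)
The plan is to analyze the structure of $\tilde Y := \Xi^{-1}(\aom{\lambda}{G})$ and to characterize all $\ell^2$ eigenvectors of $\jacobi{\T}$ via their restrictions to the connected components of $\tilde Y$. First I would show that for each component $Y_0$ of $\aom{\lambda}{G}$, any component $\tilde Y_0$ of $\tilde Y$ lying over $Y_0$ is a subtree of $\T$ and $\Xi|_{\tilde Y_0}\colon \tilde Y_0 \to Y_0$ is a graph covering map: a neighbor of $\tilde v \in \tilde Y_0$ in $\T$ lies in $\tilde Y_0$ iff it lies in $\tilde Y$ iff its $\Xi$-image lies in $Y_0$. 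Hence $\tilde Y_0 \cong Y_0$ when $Y_0$ is acyclic, and $\tilde Y_0$ is the infinite universal cover of $Y_0$ otherwise. By Proposition \ref{prop:equivallencesforeigs}(v) every $\eta \in \Ker(\lambda - \jacobi{\T})$ vanishes on $\calV \setminus \tilde Y$, so $\jacobi{\T}\eta = \lambda\eta$ decomposes into (a) the equation $A_{\tilde Y_0}\eta = \lambda\eta$ on each subtree $\tilde Y_0$, with $A_{\tilde Y_0}$ the induced Jacobi operator, and (b) at each $\tilde w \in \calV \setminus \tilde Y$ adjacent to $\tilde Y$, a linear constraint $\sum_{\tilde u \sim \tilde w,\, \tilde u \in \tilde Y} a_{(\tilde w, \tilde u)}\eta(\tilde u) = 0$; because $\T$ is a tree, $\tilde w$ has at most one neighbor in each component of $\tilde Y$.

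To prove (i) and (ii) I would induct on $|V(G)|$. If some component $Y_0$ contains a cycle, then $\tilde Y_0$ is infinite and $A_{\tilde Y_0}$ is itself a periodic Jacobi operator on the universal cover of $Y_0$. For each $u \in Y_0$ and each $\tilde u \in \Xi^{-1}(u) \cap \tilde Y_0$, take $\eta \in \Ker(\lambda - \jacobi{\T})$ with $\eta(\tilde u) \neq 0$ (furnished by $u \in \aom{\lambda}{G}$); its restriction to $\tilde Y_0$ is a nonzero $\ell^2$ eigenvector of $A_{\tilde Y_0}$ at $\tilde u$, so $u \in \aom{\lambda}{Y_0}$. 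Thus $\aom{\lambda}{Y_0} = Y_0$, and the inductive hypothesis applied to $Y_0$---valid when $|V(Y_0)| < |V(G)|$---forces $Y_0$ acyclic, a contradiction. The main obstacle will be the edge case $Y_0 = V(G)$, where induction does not reduce the graph; here I would argue directly using Lemma \ref{lem:lifts-weak-conv}, taking covers $G_n$ of $G$ of diverging girth and bounding the multiplicity of $\lambda$ in $\jacobi{G_n}$ combinatorially to show $\mu\{\lambda\} = 0$ when $G$ has a cycle and $Y = V(G)$, contradicting $Y \neq \emptyset$. Once (i) is in hand, (ii) follows quickly: $Y_0$ is a finite tree, $\tilde Y_0 \cong Y_0$, and $\eta|_{\tilde Y_0}$ corresponds to $\phi \in \Ker(\lambda - A_{Y_0})$; simplicity comes from propagation---fix a leaf $u_0$ of $Y_0$, then the equation at $u_0$ determines $\phi$ at the unique neighbor from $\phi(u_0)$, and iterating along the tree determines $\phi$ everywhere from $\phi(u_0)$ alone.

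For part (iii) I would use (i) and (ii) to give an explicit description of $\Ker(\lambda - \jacobi{\T}) \cap \ell^2(\calV)$: on each component $\tilde Y_0^{(k)}$ of $\tilde Y$, any eigenvector is a scalar multiple of the pullback of the unique $\phi_{Y_0} \in \Ker(\lambda - A_{Y_0})$, subject to the linear boundary constraints from step (b). To compute $\mu\{\lambda\}$, I would invoke Lemma \ref{lem:lifts-weak-conv} with covers $G_n$ of diverging girth. For large $n$, the preimage of $\aom{\lambda}{G}$ in $G_n$---a disjoint union of finite trees by (i)---splits into $n$ disjoint copies, yielding $n \cdot \cc \aom{\lambda}{G}$ free coefficients modulo $n \cdot |\partial \aom{\lambda}{G}|$ boundary constraints (linearly independent for suitable covers, which may be arranged within the Lemma \ref{lem:lifts-weak-conv} construction). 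The multiplicity of $\lambda$ in $\jacobi{G_n}$ is then $n \cdot \ind{\lambda}{G}$, giving $\mu_n\{\lambda\} = \ind{\lambda}{G}/|V(G)|$. Upgrading weak convergence $\mu_n \to \mu$ to convergence of atoms---using that $\mu$ has finitely many atoms and no singular-continuous part (cf.\ \cite{avni2020periodic})---finally yields $\mu\{\lambda\} = \ind{\lambda}{G}/|V(G)|$.
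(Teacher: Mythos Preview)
Your argument has genuine gaps in parts (ii) and (iii), and an unresolved edge case in (i).

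For (ii), the propagation-from-a-leaf argument is incorrect for trees with branching. Knowing $\phi(u_0)$ at a leaf determines $\phi$ at its unique neighbor, but once you reach a vertex with several children the single eigenvector equation there does not pin down the children's values individually; on $K_{1,3}$ with $b\equiv 0$ and $a\equiv 1$ the kernel is two-dimensional. What makes the Aomoto trees special---and what your argument never invokes---is that \emph{every} vertex of $Y_0$ lies in $\aom{\lambda}{G}$, so (via a short Zorn argument) $A_{Y_0}$ admits a $\lambda$-eigenvector that is nonzero at every vertex. The paper proves a Fiedler-type lemma: on any (possibly infinite) tree, an everywhere-nonzero $\lambda$-eigenvector forces the $\lambda$-eigenspace to be one-dimensional. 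This simultaneously yields the simplicity in (ii) and, combined with Proposition~\ref{prop:equivallencesforeigs}\eqref{prop:equivallencesforeigs-i} (eigenspaces of $A_\T$ are infinite-dimensional whenever $G$ has a cycle), disposes of your edge case $Y_0 = V(G)$ in (i) without any appeal to multiplicity bounds on finite lifts.

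For (iii), the finite-lift approach gives at best the lower bound. First, eigenvectors of $A_{G_n}$ for $\lambda$ need not be supported on the preimage of $\aom{\lambda}{G}$: the Aomoto set constrains $\ell^2$ eigenvectors of $A_\T$, not eigenvectors of finite covers, and indeed the multiplicity of $\lambda$ in $A_{G_n}$ can strictly exceed $n\,\ind{\lambda}{G}$ (see the $K_{c,d}$ example in Section~\ref{example:biregular}). Second, even granting $\mu_n\{\lambda\} = \ind{\lambda}{G}/|V(G)|$ exactly, Portmanteau for the closed set $\{\lambda\}$ yields only $\mu\{\lambda\} \ge \limsup_n \mu_n\{\lambda\}$; the absence of singular continuous spectrum in $\mu$ does not bound $\mu\{\lambda\}$ from above, since nearby eigenvalues of $A_{G_n}$ may dissolve into the absolutely continuous part of $\mu$. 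The paper instead computes $\mu\{\lambda\}$ directly from Lemma~\ref{lem:atom-basis}: it contracts each Aomoto tree to a vertex to form an auxiliary bipartite graph $G'$, exhibits an isometry $\Ker(\lambda - A_\T) \cong \Ker A_{\T'}$, and then exploits the bipartite block structure $A^2 = Z^\ast Z \oplus Z Z^\ast$ (approximated through finite lifts of $G'$) to read off the atom at $0$ as $|U| - |\partial U| = \ind{\lambda}{G}$.
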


\noindent Assertion \eqref{thm:aomoto-i}, claimed without proof in \cite{avni2020periodic}, clarifies an ambiguity in Aomoto's result, which did not rule out self-loops or multi-edges in the subgraph induced by $\aom{\lambda}{G}$; \eqref{thm:aomoto-ii} is a new observation, and \eqref{thm:aomoto-iii} is due to Aomoto.  Our new proof is combinatorial and linear algebraic, using properties of eigenvectors of Jacobi operators on finite and infinite trees; the question of finding an alternative to Aomoto's original proof explaining the significance of the quantity $\ind{\lambda}{G}$, was posed in \cite[Problem 8.1]{avni2020periodic}. The proofs of \eqref{thm:aomoto-i} and \eqref{thm:aomoto-ii} may be found in Section \ref{sec:acyclic}, and that of \eqref{thm:aomoto-iii} in Section \ref{sec:mass}.

We then build on the proof of Theorem \ref{thm:aomoto} to prove a number of novel results. First, we show that for any graph $G$, the point spectrum of the periodic Jacobi operator on its unversal cover is contained in $\Spec \jacobi{G}$---with multiplicity bounded in terms of the index $\ind{\lambda}{G}$. In fact, we can further refine this result for the Jacobi operator of any cover $H$ of $G$.

\begin{theorem}
    \label{thm:multiplicity}
    Let $G$ be a finite graph, $H$ an $n$-lift of $G$, and $\T$ their common universal cover. If $\lambda \in \Spec_p \jacobi{\T}$, then
    \begin{enumerate}[(i)]
        \item \label{thm:multiplicity-i} $\lambda \in \Spec \jacobi{G}$ with multiplicity at least $|V(G)| \cdot \mu\{\lambda\}$, and
        \item $\lambda \in \Spec \jacobi{H/G}$ with multiplicity at least $(n - 1) |V(G)|\cdot \mu\{\lambda\}$,
    \end{enumerate}
    so that the multiplicity of $\lambda \in \Spec \jacobi{H}$ is at least $n|V(G)|\cdot\mu\{\lambda\}$.
\end{theorem}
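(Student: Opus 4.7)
The plan is to construct eigenvectors of $\jacobi{G}$ and $\jacobi{H/G}$ with eigenvalue $\lambda$ by taking linear combinations of the component-wise eigenvectors supplied by Theorem~\ref{thm:aomoto}(ii), and to count degrees of freedom via the decomposition $\jacobi{H} = \jacobi{G}\oplus\jacobi{H/G}$ from Section~\ref{subsec:jacobiops}. Writing $\xi : H \to G$ for the covering map, I would first note that $\aom{\lambda}{H} = \xi^{-1}(\aom{\lambda}{G})$ (by characterization \eqref{prop:equivallencesforeigs-v} of Proposition~\ref{prop:equivallencesforeigs} together with the fact that $\T$ is the common universal cover), and since each component $C$ of $\aom{\lambda}{G}$ is a tree by Theorem~\ref{thm:aomoto}(i), its preimage in $H$ is a disjoint union of $n$ graph-isomorphic copies $C'_1,\ldots,C'_n$; in particular $\cc\aom{\lambda}{H} = n\,\cc\aom{\lambda}{G}$ and $|\partial\aom{\lambda}{H}| = n|\partial\aom{\lambda}{G}|$. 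Let $\eta_C$ denote the unique (up to scale) $\lambda$-eigenvector of $\jacobi{G}|_C$ supplied by Theorem~\ref{thm:aomoto}(ii), extended by zero off $C$, and set $\eta_{C'_i}(u') := \eta_C(\xi(u'))$ for $u' \in C'_i$, and zero elsewhere. Put $V^* := \mathrm{span}\{\eta_{C'_i}\}_{C,i} \subset \ell^2(V(H))$.

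Next I would decompose $V^* = V^*_G \oplus V^*_{H/G}$, where $V^*_G$ collects combinations $\sum_{C,i}\alpha_{C,i}\eta_{C'_i}$ with $\alpha_{C,i}$ independent of $i$, and $V^*_{H/G}$ those with $\sum_i \alpha_{C,i} = 0$ for every $C$. This matches the Jacobi-splitting $\ell^2(V(H))\simeq (\ell^2(V(G))\otimes\bbC^n_1)\oplus(\ell^2(V(G))\otimes\bbC^n_0)$ from Section~\ref{subsec:jacobiops}, so vectors in $V^*_G \cap \ker(\lambda-\jacobi{H})$ correspond to $\jacobi{G}$-eigenvectors and vectors in $V^*_{H/G}\cap\ker(\lambda-\jacobi{H})$ to $\jacobi{H/G}$-eigenvectors. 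For $\zeta\in V^*$ the equation $\jacobi{H}\zeta = \lambda\zeta$ holds automatically at every $u'\notin\partial\aom{\lambda}{H}$, so $\zeta \in \ker(\lambda - \jacobi{H})$ iff the $n|\partial\aom{\lambda}{G}|$ linear conditions $L_{v'}(\zeta) := (\jacobi{H}\zeta)(v') = 0$ hold for all $v'\in\partial\aom{\lambda}{H}$. The two observations I need are: on $V^*_G$, all $L_{v'}$ with $v'\in\xi^{-1}(v)$ reduce to the natural $\jacobi{G}$-constraint at $v$ and therefore coincide, yielding at most $|\partial\aom{\lambda}{G}|$ independent conditions and hence $\dim(V^*_G\cap\ker(\lambda-\jacobi{H})) \geq \cc\aom{\lambda}{G} - |\partial\aom{\lambda}{G}| = \ind{\lambda}{G}$; on $V^*_{H/G}$, a direct unpacking of the edge-permutations yields
\[
\sum_{v'\in\xi^{-1}(v)}L_{v'}(\zeta) \;=\; \sum_{\substack{e:\tau(e)=v \\ \sigma(e)\in\aom{\lambda}{G}}} a_e\,\eta_{C(\sigma(e))}(\sigma(e))\,\sum_i\alpha_{C(\sigma(e)),i} \;=\; 0,
\]
so the $n|\partial\aom{\lambda}{G}|$ conditions have rank at most $(n-1)|\partial\aom{\lambda}{G}|$ on $V^*_{H/G}$, giving $\dim(V^*_{H/G}\cap\ker(\lambda-\jacobi{H})) \geq (n-1)\ind{\lambda}{G}$.

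Substituting $\ind{\lambda}{G} = |V(G)|\mu\{\lambda\}$ via Theorem~\ref{thm:aomoto}(iii) then produces both claimed multiplicity bounds, and the stated bound on $\Spec \jacobi{H}$ follows by summing. The hardest part will be the bookkeeping behind the sum-to-zero identity: one has to carefully track how each permutation $\pi_e$ relabels the fibers above $\sigma(e)$ as $v'=(v,j)$ ranges over $\xi^{-1}(v)$, so that $\sum_j\alpha_{C,\pi_e^{-1}(j)}$ becomes a re-indexing of the full sum $\sum_i\alpha_{C,i}$ and therefore vanishes on $V^*_{H/G}$. Once this is in hand, the entire argument is a clean rank--nullity count resting directly on Theorem~\ref{thm:aomoto}.
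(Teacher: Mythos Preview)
Your argument is correct and rests on the same core idea as the paper---build candidate eigenvectors from the tree eigenvectors supplied by Theorem~\ref{thm:aomoto}(ii), then do a rank--nullity count against the constraints imposed at $\partial\aom{\lambda}{G}$---but the paper organizes it differently. Rather than working in $H$ and splitting $V^* = V^*_G \oplus V^*_{H/G}$ by hand, the paper introduces \emph{unitary-weighted Jacobi operators} $\jacobi{G,U}$ on $G$ (Theorem~\ref{thm:aomoto-converse-unitary}) and proves a single multiplicity bound $n(\cc X - |\partial X|)$ valid for any $U : E(G) \to \mathrm{U}(n)$. Since $\jacobi{G}$ and $\jacobi{H/G}$ are both of this form (with weights in $\mathrm{U}(1)$ and $\mathrm{U}(n-1)$ respectively), both parts of Theorem~\ref{thm:multiplicity} fall out at once. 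The paper's Lemma~\ref{lem:tree-jacobi-unitary} (constructing tree eigenvectors by propagating $\prod U_e^\ast$ along paths) plays the role of your lifting $\eta_{C'_i}(u') = \eta_C(\xi(u'))$, and the boundary map $\phi$ there is exactly your collection of linear functionals $L_{v'}$.

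What your direct approach buys is concreteness: you never leave the permutation-lift setting, and the sum-to-zero identity you highlight is a pleasant explicit computation. What the paper's abstraction buys is economy and reusability: the same Theorem~\ref{thm:aomoto-converse-unitary} also drives the proof of Theorem~\ref{thm:aomoto-converse}, and it would apply to other representations of the deck group beyond the permutation one. Your separate treatment of $V^*_G$ and $V^*_{H/G}$ duplicates the rank--nullity step, whereas the paper runs it once for general $n$.
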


\noindent We will show at the end of Section \ref{example:biregular} that these lower bounds on multiplicity need not be tight.

Additionally, we prove a converse to Theorem \ref{thm:aomoto}, namely that if a graph has a set replicating the structure of the Aomoto set for some $\lambda$, then its universal cover has $\lambda$ in its point spectrum. To be precise, let us  extend the notation $\partial$ and $\cc$ to apply to any  $X\subset V(G)$, and for each $\lambda \in \bbR$, let $\calA_\lambda(G)$  be the set of all subsets $X \subset V(G)$ which induce an acyclic subgraph, each connected component of which has $\lambda$ in the spectrum of its induced Jacobi operator and such that $\cc(X)-|\partial X|>0$. 

\begin{theorem}
    \label{thm:aomoto-converse}
    Let $G$ be a finite graph, and let $\T$ be its universal cover. For any $\lambda \in \bbR$ and $X \in \calA_\lambda(G)$, 
    \begin{enumerate}[(i)]
        \item \label{thm:aomoto-converse-i}$\lambda \in \Spec \jacobi{G}$, with multiplicity at least $\cc(X) - |\partial X|$.
        \item \label{thm:aomoto-converse-ii} \label{thm:aomoto-converseii} $\lambda \in \Spec_p \jacobi{\T}$, and $|V(G)|\cdot \mu \{\lambda\} \ge \cc(X) - |\partial X|$.
    \end{enumerate}
\end{theorem}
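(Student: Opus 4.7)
For part (i) the plan is to exhibit $\cc(X) - |\partial X|$ linearly independent vectors in $\Ker(\lambda - \jacobi{G})$ by a direct construction from the local eigenvectors supplied by the hypothesis $X \in \calA_\lambda(G)$. For each connected component $C$ of the subgraph of $G$ induced by $X$, choose a nonzero $\phi_C \in \Ker(\lambda - \jacobi{C})$ and let $\tilde\phi_C : V(G) \to \bbC$ denote its extension by zero. For scalars $(c_C)_C$ I will form $\eta = \sum_C c_C\, \tilde\phi_C$ and verify the eigenvalue equation vertex by vertex. On $X$, only $\tilde\phi_{C_0}$ contributes at a vertex in component $C_0$, because distinct components of $X$ share no edge in $G$ (else they would be merged), and the equation reduces to $\jacobi{C_0}\phi_{C_0} = \lambda\phi_{C_0}$; on $V(G) \setminus (X \cup \partial X)$ both sides vanish; and at $u \in \partial X$ the requirement becomes the single linear constraint
\begin{equation*}
    \sum_C c_C\, \beta_C(u) = 0, \qquad \beta_C(u) := \sum_{e\,:\,\tau(e)=u,\ \sigma(e) \in C} a_e\, \phi_C(\sigma(e)).
\end{equation*}
This is a homogeneous system of $|\partial X|$ equations in $\cc(X)$ unknowns, so its solution space has dimension at least $\cc(X) - |\partial X|$; since the $\tilde\phi_C$ have pairwise disjoint supports the assignment $(c_C) \mapsto \eta$ is injective, which establishes (i).

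For part (ii) the plan is to apply (i) along a sequence of finite covers of $G$ and transfer the resulting spectral mass to $\mu$ via weak convergence. Let $\xi_n : G_n \to G$ be a sequence of covers of degree $d_n$ with diverging girth supplied by Lemma \ref{lem:lifts-weak-conv}, so that the empirical spectral measures $\mu_n$ of $\jacobi{G_n}$ converge weakly to $\mu$. Setting $X_n := \xi_n^{-1}(X)$, each tree component of the subgraph induced by $X$ lifts to $d_n$ disjoint isomorphic copies in $G_n$, and these copies remain disconnected from lifts of other components inside $X_n$ (an edge between them in $G_n$ would project to an edge between distinct components of $X$). Therefore $\cc(X_n) = d_n\, \cc(X)$, while $\partial X_n = \xi_n^{-1}(\partial X)$ gives $|\partial X_n| = d_n |\partial X|$, so $X_n \in \calA_\lambda(G_n)$. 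Applying (i) to $G_n$ yields multiplicity at least $d_n(\cc(X) - |\partial X|)$ for $\lambda \in \Spec \jacobi{G_n}$, whence $\mu_n\{\lambda\} \geq (\cc(X) - |\partial X|)/|V(G)|$ for all $n$. Since $\{\lambda\}$ is closed, the portmanteau theorem then gives
\begin{equation*}
    \mu\{\lambda\} \geq \limsup_{n \to \infty} \mu_n\{\lambda\} \geq \frac{\cc(X) - |\partial X|}{|V(G)|} > 0,
\end{equation*}
and Proposition \ref{prop:equivallencesforeigs} places $\lambda$ in $\Spec_p \jacobi{\T}$ with the claimed mass bound.

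The main obstacle I foresee is the combinatorial bookkeeping in (ii): verifying that each component of $X$ genuinely lifts to $d_n$ disjoint \emph{tree} components of $X_n$ in $G_n$ (rather than merging into larger or possibly cyclic ones) and that $\lambda$ persists as an eigenvalue of each lifted component's induced Jacobi operator. Both follow from the facts that a covering map is a local isomorphism preserving edge weights and potentials, and that any cover restricted to a simply connected base is trivial, but making the verification clean is where the care lies.
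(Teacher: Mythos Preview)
Your argument is correct. Part (i) is essentially the paper's own construction specialized to scalar weights: the paper packages the same linear-algebra step as Theorem~\ref{thm:aomoto-converse-unitary}, building the space $\bigoplus_T \Ker(\lambda - \jacobi{T,U})$ and the boundary map $\phi$, then counting dimensions---exactly your $(c_C)\mapsto\eta$ and the $|\partial X|\times\cc(X)$ system. The two proofs of (ii) share the same skeleton (pass to the high-girth lifts $G_n$ of Lemma~\ref{lem:lifts-weak-conv}, bound $\mu_n\{\lambda\}$ uniformly, invoke portmanteau) but differ in how the multiplicity bound on each $G_n$ is obtained. The paper views $\jacobi{G_n}$ as a unitary-weighted Jacobi operator on $G$ with weights in $\mathrm{U}(d_n)$ and applies Theorem~\ref{thm:aomoto-converse-unitary} directly to the \emph{same} set $X\subset V(G)$; this avoids any lifting bookkeeping but requires Lemma~\ref{lem:tree-jacobi-unitary} to propagate the eigenvalue through the unitary weights. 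You instead lift $X$ to $X_n\subset V(G_n)$ and apply (i) to $G_n$ itself, which is more elementary and self-contained (no unitary-weighted machinery needed), at the cost of the combinatorial check that trees lift trivially---the ``obstacle'' you flag, which is indeed routine since each component of $X$ is simply connected. The trade-off is that the paper's unitary-weighted formulation simultaneously yields Theorem~\ref{thm:multiplicity}(ii) on $\jacobi{H/G}$, which your direct approach does not immediately give.
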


\noindent The lower bounds in \eqref{thm:aomoto-converse-i} and \eqref{thm:aomoto-converse-ii} need not be tight. For \eqref{thm:aomoto-converse-i}, this is shown at the end of Section \ref{example:biregular}; for \eqref{thm:aomoto-converse-ii}, when $I_\lambda (G)>1$, one may choose $X$ to contain only a subset of the trees in the true Aomoto set for some $\lambda \in \Spec_p \jacobi{\T}$. Furtheremore, there are cases where the inequality in (\ref{thm:aomoto-converse-ii}) is strict for elements in $\mathcal{A}_\lambda(G)$ that are not subsets of the Aomoto set. 

The proofs of both Theorem \ref{thm:multiplicity} and Theorem \ref{thm:aomoto-converse} follow from a generalization of the latter, Theorem \ref{thm:aomoto-converse-unitary}, which we state and verify in Section \ref{sec:gen-converse}. The argument proceeds, roughly, by patching together and extending the $\lambda$-eigenvectors on each component of the Aomoto set promised by Theorem \ref{thm:aomoto} to global eigenvectors of $\jacobi{G}$ and $\jacobi{H/G}$. This has the interesting consequence that if $\lambda \in \Spec_p A_\T$ and $G_1, G_2, \dots$ is a sequence of lifts of $G$, there is a constant fraction of $|V(G_n)|$ of  $\lambda$-eigenvectors of $A_{G_n}$ that are localized. In contrast, under some technical assumptions, quantum ergodicity results \cite{anantharaman2019quantum} imply that if $\lambda$ is in the absolutely continuous part of the spectrum of $A_\T$, the proportion between the number of localized $\lambda$-eigenvectors  of $A_{G_n}$ and $|V(G_n)|$  goes to zero. 

Combining Theorems \ref{thm:multiplicity} and \ref{thm:aomoto-converse}, we find the following corollary. Note that since there are finitely many induced subgraphs, in finite time we can find every $\lambda \in \bbR$ for which $\calA_\lambda(G)$ is nonempty.

\begin{corollary} 
\label{cor:finitealg}
    Let $G$ be a finite graph, $\T$ it's universal cover. Then for each $\lambda \in \bbR$,
    \begin{equation} \label{eq:massoflambdaformula}
        \mu\{\lambda\} = \frac{1}{|V(G)|}\max_{X \in \calA_\lambda(G)}\big(\cc(X) - |\partial(X)|\big).
    \end{equation}
Moreover, $\Spec_p \jacobi{\T}$ may be computed from $G$ in finite time.
\end{corollary}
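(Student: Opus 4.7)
The plan is to prove the mass formula \eqref{eq:massoflambdaformula} by establishing two matching inequalities, using Theorem \ref{thm:aomoto} for the lower bound on the maximum and Theorem \ref{thm:aomoto-converse} for the upper bound, and then to deduce the algorithmic claim by enumeration.

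First I would split into cases based on whether $\lambda \in \Spec_p \jacobi{\T}$. If so, parts (i) and (ii) of Theorem \ref{thm:aomoto} assert that $\aom{\lambda}{G}$ induces an acyclic subgraph and that $\lambda$ lies in the spectrum of the induced Jacobi operator of every connected component, while part (iii) gives $\cc \aom{\lambda}{G} - |\partial \aom{\lambda}{G}| = |V(G)|\mu\{\lambda\}$, which is strictly positive. This places $\aom{\lambda}{G}$ in $\calA_\lambda(G)$ and makes the right-hand side of \eqref{eq:massoflambdaformula} at least $\mu\{\lambda\}$. The matching upper bound is an immediate application of Theorem \ref{thm:aomoto-converse}(ii) to an arbitrary $X \in \calA_\lambda(G)$. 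If instead $\lambda \notin \Spec_p \jacobi{\T}$, then $\mu\{\lambda\} = 0$ by Proposition \ref{prop:equivallencesforeigs}, and Theorem \ref{thm:aomoto-converse}(ii) forces $\calA_\lambda(G) = \emptyset$ --- any nonempty element there would land $\lambda$ in the point spectrum. Adopting the convention $\max_\emptyset := 0$, both sides of \eqref{eq:massoflambdaformula} then vanish.

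For the algorithmic claim, the key observation is that $\Spec_p \jacobi{\T} = \{\lambda \in \bbR : \calA_\lambda(G) \neq \emptyset\}$. Since there are only $2^{|V(G)|}$ subsets $X \subset V(G)$, I would enumerate them, check in finite time whether each induces an acyclic subgraph satisfying $\cc(X) - |\partial X| > 0$, and for each such admissible $X$ diagonalize the induced Jacobi operator on each connected component. Intersecting the finite eigenvalue sets across components yields the $\lambda$'s compatible with $X$, and taking the union of these over all admissible $X$ produces $\Spec_p \jacobi{\T}$.

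I do not anticipate a real obstacle here: Theorems \ref{thm:aomoto} and \ref{thm:aomoto-converse} do all the heavy lifting, and the corollary amounts to a clean bookkeeping consequence together with a straightforward enumeration algorithm. The only subtlety worth flagging is the convention for the maximum over the empty $\calA_\lambda(G)$, which I would state once at the outset so the case $\lambda \notin \Spec_p \jacobi{\T}$ is well-defined.
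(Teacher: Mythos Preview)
Your proposal is correct and matches the paper's approach: the paper does not give a formal proof but simply states that the corollary follows by combining the main theorems, and the sentence after the corollary confirms that Theorem \ref{thm:aomoto} is what exhibits $X_\lambda(G)$ as a maximizer in $\calA_\lambda(G)$, while Theorem \ref{thm:aomoto-converse}(ii) supplies the matching inequality. Your case split and explicit handling of the empty-max convention fill in exactly the bookkeeping the paper leaves implicit, and your enumeration argument is what the paper sketches in the sentence preceding the corollary.
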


\noindent Although Theorem \ref{thm:aomoto} implies that the Aomoto set $X_\lambda(G)$ is an element in $A_\lambda(G)$ maximizing the quantity on the right side of \eqref{eq:massoflambdaformula}, it can happen that there is not a unique maximizer. Figure \ref{fig:twomaxsets} gives such an example.

\begin{figure}[h]
    \centering
   \includegraphics[scale=.65]{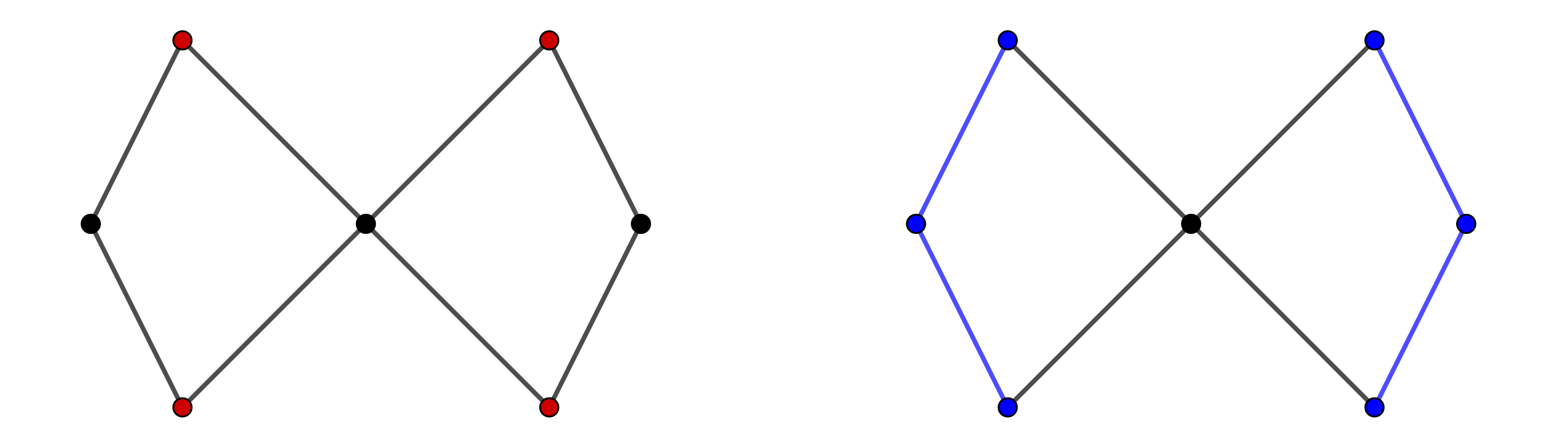}
    \caption{Two distinct sets of vertices (in red and blue respectively) of a graph $G$ are shown. If $a\equiv 1$ and $b\equiv 0$, it is easy to show from Corollary \ref{cor:finitealg} that $I_0(G)=1$. Then both the red and the blue vertex set belong to $\mathcal{A}_0(G)$. It will follow from Observation \ref{lem:aomoto-trees-zero} below that $X_0(G)$ is precisely the set indicated by the red vertices.  }
    \label{fig:twomaxsets}
\end{figure}

Finally, we use Theorem \ref{thm:aomoto} and Theorem \ref{thm:aomoto-converse} to argue that point spectrum is rare in a certain sense. To make this precise fix $G=(V, E)$ and think of the set of possible Hermitian edge weights $a=(a_e)_{e\in E}$ and vertex potentials $b=(b_v)_{v\in V}$ as $\C^{|E|/2}\oplus \R^{|V|} \cong \R^{|E|+|V|}$.   

\begin{theorem}
\label{thm:setofparameterswithpp}
    Let $G=(V, E)$ be a finite graph with at least one cycle and $\T$ be its universal cover. Assume that every vertex in $G$ has at least $d_{\min}$ distinct neighborhs. Leaving $V$ and $E$ fixed, let $\P \subset \R^{|E|+|V|}$ be the set of Hermitian edge weights and potentials for which $\Spec_p A_\T \neq \emptyset$. Then, $\P$ is a semialgebraic closed set of codimension at least $\max \{d_{\min}-1, 1\}$.\footnote{In a previous version of this paper we only proved that $\P$ is a closed set of Lebesgue measure 0 by  showing that $\P$ was contained in an algebraic set of codimension 1. We thank Barry Simon for pointing out this stronger version of the theorem and suggesting a sketch of the proof.} 
\end{theorem}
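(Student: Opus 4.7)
My plan is to combine Corollary \ref{cor:finitealg}, which characterizes the presence of point spectrum combinatorially, with a codimension count and a combinatorial lemma. Let $\mathcal{A}^{\mathrm{c}}(G)$ denote the finite collection of non-empty $X \subseteq V$ whose induced subgraph in $G$ is acyclic and satisfies $\cc(X) > |\partial X|$; this collection depends only on $V$ and $E$, not on $(a,b)$. By Corollary \ref{cor:finitealg}, a parameter tuple $(a,b)$ lies in $\P$ if and only if, for some $X \in \mathcal{A}^{\mathrm{c}}(G)$, the characteristic polynomials $p_1(\lambda),\dots,p_k(\lambda)$ of the induced Jacobi operators on the connected components $C_1,\dots,C_k$ of $X$ share a common root. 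Writing $S_X$ for the set of $(a,b)$ with this property, we have $\P = \bigcup_X S_X$, a finite union.

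Each $S_X$ is the image under the projection $(\lambda,a,b)\mapsto(a,b)$ of the real algebraic set $\widetilde{S}_X = \{(\lambda,a,b) : p_i(\lambda;a,b) = 0\ \forall i\}\subseteq\bbR^{1+|E|+|V|}$, hence semialgebraic by Tarski--Seidenberg, and is closed by a routine limit argument using the continuous bound $|\lambda_n| \leq \max_i \|A_{C_i}\|$ on any sequence of common roots. Finite unions preserve both properties, so $\P$ is closed and semialgebraic. It remains to prove: (a) $S_X$ has codimension at least $k-1$ in $\bbR^{|E|+|V|}$, where $k = \cc(X)$; and (b) $k \geq \max\{d_{\min},2\}$ for every $X \in \mathcal{A}^{\mathrm{c}}(G)$.

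For (a), the crucial feature is that $p_i$ depends only on $\lambda$, $a|_{E(C_i)}$, and $b|_{V(C_i)}$, with these variable blocks pairwise disjoint across $i$. At a generic point of $\widetilde{S}_X$ (where $\lambda$ is a simple eigenvalue of each $A_{C_i}$), pick for each $i$ a vertex $v^{(i)}\in V(C_i)$ at which the associated $\lambda$-eigenvector is nonzero; cofactor expansion then gives $\partial p_i/\partial b_{v^{(i)}} = -M_{v^{(i)}v^{(i)}}(\lambda)\neq 0$, since the principal minor $M_{v^{(i)}v^{(i)}}(\lambda)$ vanishes exactly when $\lambda$ remains an eigenvalue of the reduced Jacobi operator, which fails for a simple eigenvalue whose eigenvector does not vanish at $v^{(i)}$. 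These partials populate a diagonal $k\times k$ block of the Jacobian of $\Psi = (p_1,\dots,p_k)$, making $\Psi$ a submersion on a dense open subset of $\widetilde{S}_X$; the complement, cut out by an additional equation $p_i'(\lambda)=0$ for some $i$, is strictly lower-dimensional. Hence $\dim_\bbR\widetilde{S}_X \leq 1+|E|+|V|-k$, and projecting out $\lambda$ cannot increase dimension, yielding $\dim S_X \leq 1+|E|+|V|-k$, i.e.\ codimension at least $k-1$.

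For (b), $\cc(X)=1$ combined with eligibility forces $|\partial X|=0$, so $X$ is a union of connected components of the connected graph $G$, hence $X=V$; but then $G[X]=G$ contains a cycle, contradicting acyclicity. When $d_{\min}\geq 2$, pick any component $C$ of $X$ and a vertex $v\in C$ which is either a leaf of $C$ or its unique vertex when $|C|=1$. Since $G[X]$ is acyclic, $v$ carries no self-loop, and every neighbor of $v$ lying in $X$ actually lies in $C$ (edges inside $X$ join vertices of the same component), so $|N_G(v)\cap C|\leq 1$. The remaining $\geq d_{\min}-1$ distinct neighbors of $v$ must therefore lie in $\partial X$, giving $|\partial X|\geq d_{\min}-1$; eligibility $\cc(X)>|\partial X|$ then forces $\cc(X)\geq d_{\min}$. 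Combining (a) and (b) yields codimension at least $\max\{d_{\min}-1,1\}$. The main obstacle I anticipate is step (a): over $\bbR$, codimension does not automatically equal the number of defining polynomials (compare $x^2+y^2=0$), and the submersion-based argument above relies on carefully exploiting the disjoint-variable structure together with the existence of a non-vanishing eigenvector coordinate at a simple eigenvalue.
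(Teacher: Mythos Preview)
Your overall architecture---the decomposition $\P=\bigcup_X S_X$ over acyclic $X$ with $\cc(X)>|\partial X|$, Tarski--Seidenberg for semialgebraicity, the compactness argument for closedness, and the combinatorial bound (b)---matches the paper's proof, and your treatment of (b) is in fact cleaner than the paper's (which loosely writes ``any vertex of $T$'' where only a leaf actually yields $d_{\min}-1$ neighbors in $\partial X$). The gap you anticipated in step (a) is real, however, and the submersion argument as written does not close it.

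The problem is the sentence ``the complement, cut out by an additional equation $p_i'(\lambda)=0$, is strictly lower-dimensional.'' You have not ruled out an irreducible component of $\widetilde{S}_X$ on which some $p_j'(\lambda)$ vanishes identically; on such a component the Jacobian never attains rank $k$, and the fact that it satisfies $k+1$ polynomial equations does not, over $\bbR$, bound its dimension by $N-k-1$ (precisely the $x^2+y^2=0$ phenomenon you flag). So the density claim is circular. The paper's Proposition~\ref{prop:codimension} sidesteps genericity entirely with an inductive irreducible-component argument: since $p_1,\dots,p_r$ do not involve $b|_{V(T_{r+1})}$, every irreducible component of $\bigcap_{i\le r}\{p_i=0\}$ factors as $\mathcal{Y}_j\times W_r^\perp$, and from any point of such a component one can shift $b|_{V(T_{r+1})}$ to exit $\{p_{r+1}=0\}$ while remaining in the component, so the intersection is proper and the dimension drops by one. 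An equally clean repair along your route: for each \emph{fixed} $\lambda$, the fiber $\widetilde{S}_X^\lambda$ is cut out by the $k$ nonzero polynomials $p_i(\lambda;\,\cdot\,)$ in pairwise disjoint coordinate blocks (nonvanishing is seen by setting all edge weights to zero, whence $p_i=\prod_v(\lambda-b_v)$), hence has codimension exactly $k$ in $\bbR^{|E|+|V|}$; the semialgebraic fiber-dimension bound then gives $\dim\widetilde{S}_X\le 1+(|E|+|V|-k)$ with no appeal to simplicity of $\lambda$.
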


\begin{remark}
Even if the bound $\mathrm{codim} (\P) \geq \max \{d_{\min}-1, 1\}$ is tight in general, for many specific instances a stronger bound can be obtained. We refer the reader to the discussion in Section \ref{sec:spectral-deloc} for a stronger bound that depends in a more complicated way on the  combinatorial structure of $G$.  
\end{remark}

\noindent Theorem \ref{thm:setofparameterswithpp} will be proved in Section \ref{sec:spectral-deloc} and resolves \cite[Question 2]{aomoto1991point}, which speculated that the existence of point spectrum was dependent on the combinatorial structure of $G$ and not on the edge weights and potential. Results in a similar direction were obtained in \cite{keller2012absolutely} and \cite{keller2013spectral}. Their results are less general in the sense that they require $G$ to have edge weights $a \equiv 1$ and a loop at every vertex. However, they allow for more general potentials on the more general class of trees with finite cone type.

Theorem \ref{thm:setofparameterswithpp} implies in particular that $\P^c$ is an open dense set and hence that the point spectrum of $A_\T$ can be destroyed by adding arbitrarily small perturbations, even when $A_\T$ has isolated eigenvalues. This is surprising, given a result of Kato (see \cite[Section XII.2]{simon1978methods}) that if  $H$ is a bounded self-adjoint operator and $\lambda \in \Spec_p H$ is isolated with $\dim \ker(\lambda-H) <\infty$, then every sufficiently small perturbation of $H$ has non-empty point spectrum. Of course, this does not contradict our result, since Proposition \ref{prop:equivallencesforeigs} ensures that every $\lambda \in \Spec_p A_\T$ has an infinite-dimensional eigenspace. However, it is not the case that infinite-dimensional eigenspaces are unstable in general, and in many cases the phenomenon implied by Kato's result is still present.

Furthermore, it is an immediate consequence of Theorem \ref{thm:setofparameterswithpp} that $\P$ has Lebesgue measure zero. This can interpreted as an almost sure spectral delocalization result, since it implies that under a random absolutely continuous perturbation (with respect to the Lebesgue measure) of the edge weights and potential of $G$, the spectrum of $A_\T$ becomes purely absolutely continuous.

We conclude this section by giving some applications of the results presented above. 

\subsection{Point Spectrum of Biregular Trees}
\label{example:biregular}
    Let $G$ be the complete bipartite graph $K_{c, d}$ for some integers $d> c$, and denote by $V_c$ and $V_d$ the vertex components of $G$ having $c$ and $d$ vertices respectively. We will first analyze the case when $b\equiv 0$ and $a$ is any Hermitian edge weighting. It is easy to see that $V_d$ is a set satisfying the conditions of Theorem \ref{thm:aomoto-converse} for $\lambda =0$, and hence that $\mu\{0\}\geq \frac{d-c}{d+c}$. Then by Theorem \ref{thm:aomoto}, $X_0(G)\neq \emptyset$ and moreover $I_0(G)\geq d-c$. In the forthcoming Observation \ref{lem:aomoto-trees-zero}, we will show that that when $b\equiv 0$ (regardless of the structure of $G$) the set $X_0(G)$ is an independent set in $G$,\footnote{By an independent in $G$ we mean a set  $X\subset V(G)$ which induces a subgraph with no edges.} which together with the previous observations implies that in fact $X_0(G) = V_d$.  So by Theorem \ref{thm:aomoto}, when $b\equiv 0$, $\mu\{0\}=\frac{d-c}{d+c}$ and the vectors in $\ker(A_\T)$ are supported on the fibre of $V_d$, which extends a result of Godsil and Mohar \cite{godsil1988walk}. 
    
    In the case of arbitrary real potential $b$, the existence and location of eigenvalues of $\jacobi{\T}$ depend on the particular choice of $b$, and moreover by Theorem \ref{thm:setofparameterswithpp} we know that one may choose $b$ such that $A_\T$ has no point spectrum. This discussion resolves Problems 8.6 and 8.7 posed in \cite{avni2020periodic}. Finally, we note in passing that in this case, when $b\equiv 0$ we have $I_0(G)=d-c$ but the multiplicity of zero in $\Spec A_G$ is $d+c-2$, which shows that the bounds on the multiplicity given in Theorems \ref{thm:multiplicity} and \ref{thm:aomoto-converse} may not be tight.  

\subsection{Non-isolated Point Spectrum}
    Let $G$ be a finite graph and let $n=|V(G)|$.  Sunada's gap labeling theorem (see\cite[Theorem 5.1]{avni2020periodic} or \cite[Theorem 1.8]{vargas2019spectra}) states that $\Spec \jacobi{\T}$ is a disjoint union of at most $n$ (possibly degenerate) closed intervals typically called \textit{bands}, and that if $B$ is one of these bands then $\mu(B) = j/n$ for some $j\in [n]$. If $\lambda\in \Spec_p A_\T$ is isolated then $\{\lambda\}$ is a (degenerate) band of $\Spec A_\T$. This is the case, for example, when $\lambda =0$, $G$ is a bipartite biregular graph with components of different sizes, $a\equiv 1$ and $b\equiv 0$. Here we will show that it is possible for $0\in \Spec_p A_\T$ to lie inside a non-degenerate band of $\Spec A_\T$. Our argument is similar in spirit to the one used at the end of Section 5 of \cite{avni2020periodic} for an unrelated purpose. 
    
    Set $b \equiv 0$, noting that $\mu$ is symmetric about zero from Lemma \ref{lem:symmetryofspectrum}, and assume that $0\in \Spec_p A_\T $ and that $n-I_0(G)$ is an odd number. Figure \ref{fig:nonisolatedpps}  shows two instances where these conditions are met. If $\{0\}$ is an isolated point of $\Spec A_\T$ then any band in $\Spec A_\T$ is either disjoint from $(0, \infty)$ or fully contained in this infinite interval. Hence, by Sunada's gap labeling theorem $\mu(0, \infty) = j/n$ for some integer $j\in [n]$. On the other hand, since $\mu$ is symmetric, $\mu(-\infty, 0) = j/n$. Finally, by Theorem \ref{thm:aomoto} $\mu\{0\} = I_0(G)/n$. Putting all these observations together we get $1=\mu(\R) = \frac{2j+I_0(G)}{n},$ which contradicts the assumption that $n-I_0(G)$ is odd. 

\begin{figure}[h]
    \centering
   \includegraphics[scale=.65]{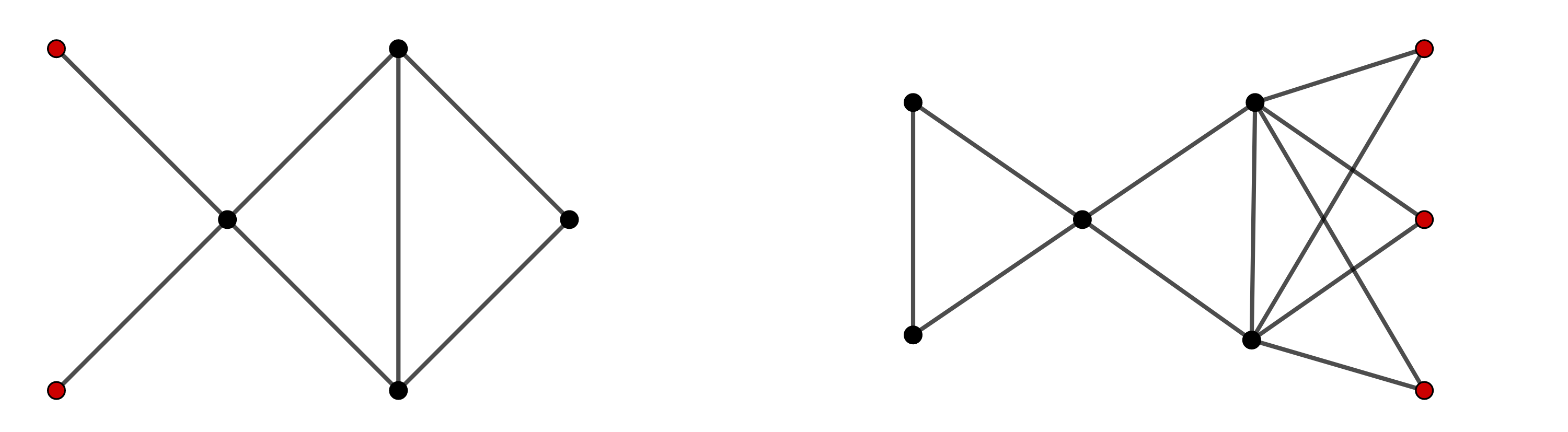}
    \caption{As in Section \ref{example:biregular}, a combination of Observation \ref{lem:aomoto-trees-zero} and Theorem \ref{thm:aomoto-converse} yields that the red vertices are the Aomoto set associated to 0 for each of the graphs displayed above. In both cases $I_0(G)=1$ while $|V(G)|$ is even.}
    \label{fig:nonisolatedpps}
\end{figure}

\section{Acyclic Nature of Aomoto Sets}
\label{sec:acyclic}

In this section we will prove the first two assertions of Theorem \ref{thm:aomoto}, namely that if $\lambda \in \Spec_p \T$, then the Aomoto set $\aom{\lambda}{G}$ is acyclic, and $\lambda$ is an eigenvalue of the induced Jacobi operator on each of its connected components. We begin by generalizing to the infinite case a result of Fielder regarding eigenvectors of finite trees \cite[Proposition 1]{fiedler1975eigenvectors}.

\begin{lemma}
    \label{lem:treemultiplicity}
    Let $T$ be a locally finite tree with Hermitian edge weights and potential $a : E(T) \to \bbC$ and $b : V(T) \to \bbR$ respectively, and Jacobi operator $\jacobi{T}$. If $\eta \in \Ker(\lambda - \jacobi{t})$ and $\eta(v) \neq 0$ for every vertex $v \in V(T)$, then $\dim \Ker(\lambda - \jacobi{T}) = 1$.
\end{lemma}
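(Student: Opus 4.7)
My plan is to prove this via a Wronskian argument exploiting the tree structure, adapting Fiedler's classical finite-tree proof. I would first reduce to real nonnegative edge weights via a gauge transformation analogous to Lemma \ref{lem:gauge}: rooting $T$ at any vertex and defining a diagonal unitary $U$ on $\ell^2(V(T))$ whose entry at $v$ is the product of phases $a_e/|a_e|$ along the unique path from the root to $v$, one sees that $U^\ast \jacobi{T} U$ is the Jacobi operator for the same tree with edge weights $|a_e|$, preserving both $\Ker(\lambda - \jacobi{T})$ (up to the unitary) and the non-vanishing of $\eta$. So without loss of generality $a_e = a_{\check e} \in \bbR_{\geq 0}$. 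Then for any other $\zeta \in \Ker(\lambda - \jacobi{T})$, I define the Wronskian $W(e) = a_e\bigl(\eta(\sigma e)\zeta(\tau e) - \zeta(\sigma e)\eta(\tau e)\bigr)$ on directed edges. Two algebraic properties follow immediately: the antisymmetry $W(e) + W(\check e) = 0$ from $a_e = a_{\check e}$, and the local conservation law $\sum_{e \in \tau(v)} W(e) = 0$, obtained by multiplying $\jacobi{T}\eta = \lambda\eta$ and $\jacobi{T}\zeta = \lambda\zeta$ at $v$ by $\zeta(v)$ and $\eta(v)$ respectively and subtracting.

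The central step is to show $W(e_0) = 0$ for every directed edge $e_0$. Removing $e_0$ splits $T$ into two subtrees $T_1 \ni \sigma(e_0)$ and $T_2 \ni \tau(e_0)$; for each $R$, let $B_R \subset T_2$ be the ball of radius $R$ around $\tau(e_0)$. Summing the conservation law over $v \in B_R$, every edge internal to $B_R$ contributes $W(e) + W(\check e) = 0$, the edge $e_0$ contributes $W(e_0)$, and the remainder is a boundary sum over edges from depth $R+1$ to depth $R$ within $T_2$. A Cauchy--Schwarz estimate, combined with $\eta, \zeta \in \ell^2(V(T))$ and the boundedness of $a$ and of vertex degrees (implicit in the Jacobi operator framework), controls this boundary sum by the product of $\ell^2$-tails of $\eta$ and $\zeta$ outside $B_{R-1}$, which vanish as $R \to \infty$. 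Hence $W(e_0) = 0$.

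Finally, $W(e_0) = 0$ for every directed edge means $\eta(\sigma e_0)\zeta(\tau e_0) = \zeta(\sigma e_0)\eta(\tau e_0)$, so the ratio $\zeta/\eta$, well-defined because $\eta$ is nowhere zero, is the same at both endpoints of every edge; by connectedness of $T$ this ratio is globally constant, yielding $\zeta = c\eta$. The main obstacle I anticipate is making the $\ell^2$-tail bound precise in step two: one must confirm that the relevant boundedness hypotheses hold and that $T_2$ is exhausted by the finite balls $B_R$, so that both Cauchy--Schwarz and the cancellation of internal edges combine cleanly to force the boundary contribution to vanish.
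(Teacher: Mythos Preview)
Your argument is correct and takes a genuinely different route from the paper's. The paper also roots the tree but, instead of summing a conservation law over growing balls, truncates $\eta$ to the subtree $T_v$ below each vertex $v$: a direct computation shows that $(\lambda - \jacobi{T})\eta|_{\ge v} = a_{v \from p(v)}\eta(p(v))\delta_v - a_{p(v)\from v}\eta(v)\delta_{p(v)}$ is supported on only two vertices, and pairing this against any $\zeta \in \Ker(\lambda - \jacobi{T})$ via self-adjointness yields your Wronskian identity $W=0$ on the edge $v\from p(v)$ immediately, with no limiting step. Applying the resulting ratio relation once more with $\zeta=\eta$ cancels the complex phases and gives $\zeta(v)/\zeta(p(v)) = \eta(v)/\eta(p(v))$ directly. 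In effect, the pairing $\langle \zeta,(\lambda-\jacobi{T})\eta|_{\ge v}\rangle = 0$ is your ball summation carried over the entire infinite half-tree at once, with the $\ell^2$ hypothesis and self-adjointness absorbing the boundary-at-infinity contribution implicitly rather than through a Cauchy--Schwarz tail estimate on shells. This buys the paper a cleaner argument that handles Hermitian weights without your preliminary gauge reduction; your approach, in turn, stays closer to the classical one-dimensional Wronskian picture and makes the role of the $\ell^2$ decay explicit---at the price of the boundedness hypotheses on degrees and weights that you correctly flag, which the paper's invocation of self-adjointness also needs but leaves unstated.
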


\begin{proof}
    Choose a root $r$ for $T$, and for each vertex $v$, write $p(v)$ for its unique parent, $T_v$ the infinite sub-tree emanating from $v$ away from its parent and $\eta|_{\ge v}$ for the restriction of $\eta$ to the subtree $T_v$. As $T$ is acyclic, it has no multi-edges or self-loops, and there is no ambiguity in writing $a_{v \from u}$ for the weight of the unique edge with source $u$ and terminal $v$. We then have
    \begin{align*}
        (\lambda - \jacobi{T})\eta|_{\ge v} 
        &= \sum_{u \in V(T_v)} \lambda \eta(u) \delta_u - \sum_{u \in V(T_v) \setminus \{v\}} (\jacobi{T} \eta)(u)\delta_u \\
        &\qquad \qquad \qquad \,\,\,\, - \left(b_v \eta(v) + \sum_{x : p(x) = v} a_{v \from x}\eta(x) \right)\delta_v - a_{p(v) \from v}\eta(v)\delta_{p(v)} \\
        &= \lambda \eta(v)\delta_v - \left((\jacobi{T}\eta)(v) - a_{v \from p(v)}\eta(p(v)) \right) \delta_v-
        a_{p(v) \from v)}\eta(v)\delta_{p(v)} \\
        &= a_{v \from p(v)}\eta(p(v))\delta_v - a_{p(v) \from v}\eta(v)\delta_{p(v)}
    \end{align*}
    for any $v \neq r$. Now, let $\zeta \in \Ker (\lambda - A_T)$. As $\jacobi{T}$ is self-adjoint and $\lambda$ real,
    \begin{align*}
        0 = \langle \zeta, (\lambda - \jacobi{T})\eta|_{\ge v}\rangle 
        = a_{v \from p(v)}\eta(p(v))\conj{\zeta(v)} - a_{p(v) \from v}\eta(v) \conj{\zeta(p(v))},
    \end{align*}
    which implies
    $$
         \frac{\zeta(v)}{\zeta(p(v))} =  \conj{\frac{a_{p(v) \from  v}}{a_{v \from p(v)}} \frac{\eta(v)}{\eta(p(v))}}.
    $$
    This identity holds for every $\zeta \in \Ker(\lambda - A_\calT)$, including $\eta$ itself, so we obtain
    $$
        \frac{\zeta(v)}{\zeta(p(v))} =  \conj{\frac{a_{p(v) \from  v}}{a_{v \from p(v)}} \frac{\eta(v)}{\eta(p(v))}} = \conj{\frac{a_{p(v) \from v}}{a_{v \from p(v)}} \conj{\frac{a_{p(v) \from v}}{a_{v \from p(v)}} \frac{\eta(v)}{\eta(p(v))}}} = \frac{|a_{p(v) \from v}|^2}{|a_{v \from p(v)}|^2} \frac{\eta(v)}{\eta(p(v))} = \frac{\eta(v)}{\eta(p(v))};
    $$
    in the final equality we have used conjugate symmetry of the edge weights. Since $\eta|_{\ge r} = \eta \in \Ker(\lambda - A_\calT)$, $\zeta$ is unconstrained at the root, and the above equation propagates a condition down the tree that $\zeta = \eta \cdot \zeta(r)/\eta(r)$.
\end{proof}

We now prove that the subgraph of $G$ induced by $\aom{\lambda}{G}$ is a forest, and that $\lambda$ is an eigenvalue, with multiplicity one, of the induced Jacobi operator of each of its connected components.

\begin{proof}[Proof of Theorem \ref{thm:aomoto}(\ref{thm:aomoto-i}-\ref{thm:aomoto-converse-ii})]
    Assume $\lambda$ is in the point spectrum of $\jacobi{\T}$, and let $G'$ be a connected component of the subgraph induced by $\aom{\lambda}{G}$. Let $\T'$ be the universal cover of $G'$. If we view $\T'$ as a subgraph of $\T$ then any vector in $\Ker(\lambda - \jacobi{\T})$ vanishes on the boundary of $\T'$ in $\T$, and thus restricts to a $\lambda$-eigenvector of $\T'$. Hence $\aom{\lambda}{G'} = V(G')$ by Proposition \ref{prop:equivallencesforeigs}\eqref{prop:equivallencesforeigs-v}, and we can now use the following observation, which follows from Zorn's lemma and appeared as \cite[Lemma 7]{nylen1998null}.
    
    \begin{observation}
        \label{obs:nonzeroentries}
        If $\aom{\lambda}{G'} = V(G')$ then there is an $\eta\in \Ker(\lambda - \jacobi{\T'})$ satisfying $\eta(u)\neq 0$ for every $u\in V(\T')$. 
    \end{observation}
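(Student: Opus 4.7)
My plan is to establish Observation~\ref{obs:nonzeroentries} via a Baire-category argument applied to the formal kernel of $\lambda - A_{\T'}$. Here ``formal'' means I drop the $\ell^2$ condition and consider the space
$\mathcal{K} := \{\eta : V(\T') \to \bbC \mid (\lambda - A_{\T'})\eta \equiv 0\}$
of pointwise eigenvectors. Crucially, $\mathcal{K}$ contains the restriction to $V(\T')$ of any $\ell^2$-eigenvector of $A_\T$: the assumption $\aom{\lambda}{G'} = V(G')$ combined with $G'$ being a connected component of the Aomoto subgraph implies that no vertex of $V(G) \setminus V(G')$ adjacent to $V(G')$ lies in $\aom{\lambda}{G}$, so any such $\ell^2$-eigenvector vanishes on the $\T$-boundary of the embedded copy of $\T'$ and its restriction satisfies the eigenvalue equation at every interior vertex. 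Combining this with Proposition~\ref{prop:equivallencesforeigs}\eqref{prop:equivallencesforeigs-v}, which supplies a non-vanishing $\ell^2$-eigenvector at each vertex of each fiber over $V(G')$, and noting that fibers of $\T' \to G'$ embed into fibers of $\T \to G$, I conclude that for every $v \in V(\T')$ there exists some $\eta_v \in \mathcal{K}$ with $\eta_v(v) \neq 0$.

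Next, I would equip $\bbC^{V(\T')}$ with the product (pointwise-convergence) topology. Since $V(\T')$ is countable, this topology makes $\bbC^{V(\T')}$ a separable, completely metrizable topological vector space, i.e.\ a Polish TVS. The subspace $\mathcal{K}$ is cut out by countably many continuous linear equations---one for each vertex of $\T'$---so it is closed, and hence itself Polish. For each $v \in V(\T')$, the set $E_v := \{\eta \in \mathcal{K} : \eta(v) = 0\}$ is the kernel of the continuous evaluation functional, hence a closed linear subspace. By the preceding paragraph it is proper, and a proper closed subspace of a topological vector space is automatically nowhere dense (any interior point would yield a balanced neighborhood of $0$ contained in the subspace, and scaling would force the subspace to be everything). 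The Baire category theorem then says $\bigcup_{v \in V(\T')} E_v$ is meager, and in particular its complement in $\mathcal{K}$ is nonempty; any $\eta$ in this complement is a formal eigenvector with $\eta(v) \neq 0$ at every vertex $v$, proving the observation.

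The main obstacle is not the Baire step but rather the first paragraph---carefully transferring the Aomoto-set hypothesis, which naturally lives among $\ell^2$-eigenvectors of $A_\T$, to the formal-eigenvector setting of $A_{\T'}$, using the fact that $G'$ is cleanly separated from $V(G) \setminus \aom{\lambda}{G}$ inside $G$. Once this transfer is in place the remainder is standard. A more hands-on alternative, matching the ``Zorn's lemma'' flavor of the cited reference, would enumerate $V(\T') = \{v_n\}_{n \geq 1}$, fix eigenvectors $\eta_{v_n} \in \mathcal{K}$ with $\eta_{v_n}(v_n) \neq 0$, and construct $\eta = \sum_n c_n \eta_{v_n}$ as a rapidly decaying series in which each coefficient $c_n$ is chosen inductively so as to secure $\eta(v_n) \neq 0$ without disturbing the non-vanishing already arranged at $v_1, \dots, v_{n-1}$; local finiteness of $\T'$ then guarantees that the pointwise limit again satisfies the eigenvalue equation.
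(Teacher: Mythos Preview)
Your Baire-category approach is sound in spirit, but there is a gap: the observation asks for $\eta \in \Ker(\lambda - \jacobi{\T'})$, which throughout the paper denotes the $\ell^2$-kernel, whereas your argument produces $\eta$ only in the formal kernel $\mathcal{K} \subset \bbC^{V(\T')}$. A formal eigenvector need not lie in $\ell^2$, so you have not established the statement as written. The fix is immediate: run the identical Baire argument directly on $\Ker(\lambda - \jacobi{\T'}) \subset \ell^2(V(\T'))$, which is a closed subspace of a Hilbert space and hence itself a complete metric space in which the Baire category theorem holds. The sets $E_v$ remain proper closed subspaces there, because the hypothesis $\aom{\lambda}{G'} = V(G')$, read through Proposition~\ref{prop:equivallencesforeigs}\eqref{prop:equivallencesforeigs-v} applied to $G'$ and its cover $\T'$, \emph{directly} asserts that for every $\tilde u \in V(\T')$ there is an $\ell^2$-eigenvector of $\jacobi{\T'}$ nonzero at $\tilde u$. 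This also renders your first paragraph superfluous: the hypothesis already lives on $\T'$, so there is no need to pass through the ambient tree $\T$ and restrict eigenvectors of $\jacobi{\T}$. Your hands-on series alternative likewise lands in $\ell^2$ once the $\eta_{v_n}$ are taken in $\ell^2$ and the coefficients are chosen with $\sum_n |c_n|\,\|\eta_{v_n}\|_2 < \infty$.

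The paper itself gives no argument here, simply citing Zorn's lemma and a lemma of Nylen; once repaired to stay inside $\ell^2$, your Baire approach is a clean alternative.
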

    
    \noindent Combining Observation \ref{obs:nonzeroentries} and Lemma \ref{lem:treemultiplicity} we conclude finally that $\dim \Ker(\lambda - \jacobi{\T'}) = 1$, and thus, by Proposition \ref{prop:equivallencesforeigs}\eqref{prop:equivallencesforeigs-i}, that $G'$ is acyclic. This further implies that $\T' = G'$, which proves the second assertion.
\end{proof}

In the course of the proof above we showed the following fact, which will be of repeated use throughout the paper. 

\begin{lemma}
    \label{lem:nonzeroAomoto}
    Let $G$ be a finite graph with Hermitian edge weights and potential, with $\lambda \in \Spec_p A_\T$ and $T_1, \dots, T_p$    the Aomoto trees of $G$ associated to $\lambda$. Then, for every $i \in [p]$ there is a unique (up to phase) unit vector $\zeta_i \in \Ker(\lambda - \jacobi{T_i})$ satisfying $\zeta(u)\neq 0$ for every $u\in V(T_i)$.
\end{lemma}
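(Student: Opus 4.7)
The plan is to assemble this lemma directly from the work carried out in the proof of Theorem \ref{thm:aomoto}\eqref{thm:aomoto-i}--\eqref{thm:aomoto-ii}. Each Aomoto tree $T_i$ is a connected component of the subgraph induced by $\aom{\lambda}{G}$, and by Theorem \ref{thm:aomoto}\eqref{thm:aomoto-i} this induced subgraph is acyclic; since $G$ is finite, each $T_i$ is therefore a finite tree.

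First I would verify that when $T_i$ is regarded as its own universal cover (which makes sense because $T_i$ is acyclic), every vertex of $T_i$ lies in $\aom{\lambda}{T_i}$. This is precisely the content of the key step in the proof of Theorem \ref{thm:aomoto}\eqref{thm:aomoto-i}: given any $\eta \in \Ker(\lambda - \jacobi{\T})$, the vector $\eta$ vanishes on the boundary in $\T$ of any lift of $T_i$, since those boundary vertices project via $\Xi$ to vertices of $G$ lying outside $\aom{\lambda}{G}$. Consequently, $\eta$ restricts to a genuine $\lambda$-eigenvector of $\jacobi{T_i}$, and by Proposition \ref{prop:equivallencesforeigs}\eqref{prop:equivallencesforeigs-v} every vertex of $T_i$ belongs to $\aom{\lambda}{T_i}$.

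Given this, Observation \ref{obs:nonzeroentries} applied with $G' = T_i$ produces a vector $\zeta_i \in \Ker(\lambda - \jacobi{T_i})$ that does not vanish at any vertex of $T_i$; rescaling by its norm gives a unit vector with the same nonvanishing property. Finally, Lemma \ref{lem:treemultiplicity} applied to the (finite, hence locally finite) tree $T_i$ using the nowhere-vanishing eigenvector $\zeta_i$ yields $\dim \Ker(\lambda - \jacobi{T_i}) = 1$. Thus $\zeta_i$ is unique up to a complex scalar, and among unit vectors that scalar is unimodular, i.e., a phase.

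In truth there is no significant obstacle, since the lemma is essentially a clean restatement of intermediate observations already established in the proof of Theorem \ref{thm:aomoto}. The only subtlety worth highlighting is confirming that a global eigenvector on $\T$ really does descend to a $\lambda$-eigenvector on each finite tree $T_i$; the point is that $\partial \aom{\lambda}{G}$ sits outside the Aomoto set and is therefore annihilated by $\eta$, which kills the cross-terms in the Jacobi eigenvalue equation on the lift of $T_i$ and makes the restriction to $T_i$ a genuine eigenvector rather than a vector contaminated by boundary contributions.
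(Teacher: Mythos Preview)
Your proposal is correct and takes essentially the same approach as the paper. Indeed, the paper does not give a separate proof of this lemma at all: it simply remarks that the statement was established ``in the course of the proof above'' (i.e., the proof of Theorem \ref{thm:aomoto}\eqref{thm:aomoto-i}--\eqref{thm:aomoto-ii}), and the steps you have written out---restriction of a global eigenvector to a lift of $T_i$, the conclusion $\aom{\lambda}{T_i}=V(T_i)$ via Proposition \ref{prop:equivallencesforeigs}\eqref{prop:equivallencesforeigs-v}, existence of a nowhere-vanishing eigenvector from Observation \ref{obs:nonzeroentries}, and uniqueness from Lemma \ref{lem:treemultiplicity}---are exactly those steps.
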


For use in the next section, we record one  consequence of the above lemma.

\begin{observation}
    \label{lem:aomoto-trees-zero}
    Let $G$ be a graph with $b\equiv 0$, $\T$ its universal cover, and assume $0 \in \Spec_p \jacobi{\T}$. Then $\aom{0}{G}$ is an independent set in $G$.
\end{observation}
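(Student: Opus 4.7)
The plan is to combine Theorem~\ref{thm:aomoto} with Lemma~\ref{lem:nonzeroAomoto} and argue by a leaf analysis. By Theorem~\ref{thm:aomoto}\eqref{thm:aomoto-i}-\eqref{thm:aomoto-ii}, the subgraph of $G$ induced by $\aom{0}{G}$ decomposes as a disjoint union of trees $T_1, \ldots, T_p$, each of which has $0$ as an eigenvalue of its induced Jacobi operator. Independence of $\aom{0}{G}$ in $G$ is equivalent to every $T_i$ consisting of a single vertex, so the task reduces to ruling out components with at least one edge.

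Next, I would apply Lemma~\ref{lem:nonzeroAomoto} to obtain, for each $T_i$, a vector $\zeta_i \in \Ker(\jacobi{T_i})$ that is nonzero at every vertex of $T_i$. Suppose for contradiction that some $T_i$ has at least two vertices; as a finite tree with $\geq 2$ vertices, $T_i$ admits a leaf $v \in V(T_i)$ with unique neighbor $w \in V(T_i)$. Evaluating the eigenvector equation $(\jacobi{T_i}\zeta_i)(v) = 0$ at $v$ yields $b_v\zeta_i(v) + a_{v \from w}\zeta_i(w) = 0$. The crucial ingredient is the hypothesis $b \equiv 0$, which kills the first term and leaves $a_{v \from w}\zeta_i(w) = 0$; since edge weights are nonzero and $\zeta_i(w) \neq 0$, this is a contradiction. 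Hence each $T_i$ is a single vertex and $\aom{0}{G}$ contains no edge of $G$.

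The main obstacle, such as it is, is bookkeeping: verifying that edges in $G$ carry nonzero weight, and that the induced Jacobi operator $\jacobi{T_i}$ counts only edges internal to $T_i$, so that the leaf $v$ really contributes a single off-diagonal term. With these conventions in hand, the conclusion is essentially immediate from the preceding structural results. The phenomenon is specific to the $b \equiv 0$ setting: with nonzero potential, the leaf equation only imposes the nontrivial ratio $\zeta_i(w)/\zeta_i(v) = -b_v/a_{v \from w}$, which does not prevent $T_i$ from having edges.
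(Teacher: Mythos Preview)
Your proof is correct and follows essentially the same approach as the paper: both invoke Lemma~\ref{lem:nonzeroAomoto} to obtain an everywhere nonzero null vector on each Aomoto tree, then observe that evaluating the eigenvector equation at a leaf with $b\equiv 0$ forces the value at the leaf's neighbor to vanish, contradicting nonvanishing unless every component is a single vertex. The paper compresses the leaf computation into the sentence ``a vector in the kernel of a Jacobi operator with potential zero for a tree cannot be nonzero at the parent of a leaf,'' which is exactly what you wrote out.
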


\begin{proof}
    By Lemma \ref{lem:nonzeroAomoto},  each Aomoto tree of $G$ must have a unique, everywhere nonzero eigenvector in the kernel of its Jacobi operator. On the other hand, a vector in the kernel of a Jacobi operator with potential zero for a tree cannot be nonzero at the parent of a leaf. Thus every Aomoto tree of $G$ is an isolated vertex as desired.
\end{proof}

\section{Aomoto's Index Formula}
\label{sec:mass}

In this section we complete the proof of Theorem \ref{thm:aomoto} by verifying the formula in equation \eqref{eq:aom-formula}: if $\lambda \in \Spec_p \jacobi{\T}$, then
$$
    |V(G)| \cdot \mu\{\lambda\} = \ind{\lambda}{G}.
$$
Our strategy will be to reduce the problem to the proof of an analogous result on an auxiliary bipartite graph $G'$.

\subsection{Constructing the Auxilliary Graph} 

Let $T_1, \dots, T_p $ be the Aomoto trees of $G=(V, E, a, b)$ associated to $\lambda$, write $\mathcal{F}_i$ for the set of disjoint copies of $T_i$ in $\T = (\calV,\calE,\a,\b)$ obtained by lifting $T_i$, and let $\F = \bigcup_{i=1}^p \F_i$. Note that $\F$ is a subforest of $\T$ and all of its subtrees are isomorphic to some Aomoto tree of $G$. 

By Lemma \ref{lem:nonzeroAomoto} there is for each $T_i$ a unique (up to phase) vector $\zeta_i \in \Ker(\lambda - \jacobi{T_i})$ with unit norm and nonzero entries. Take any $\eta \in \Ker(\lambda - \jacobi{\T})$. For every $S\in \F$, by definition of the Aomoto set it holds that  $\eta$ is zero on all vertices in $\partial V(S)$. Hence, the restriction of $\eta$ to any $S\in \F_i$ induces an eigenvector of $A_{T_i}$. This implies that $\eta$ can be decomposed as
\begin{equation}
    \label{eq:eigdecomposition}
    \eta = \sum_{S\in \F } \alpha_S \zeta_S,
\end{equation}
where $\alpha_S\in \R$ are coefficients and the $\zeta_S \in \ell^2(V(\T))$ are inclusions of the $\lambda$-eigenvectors of each Aomoto tree:
$$
    \zeta_S(v) = 
    \begin{cases} 
        \zeta_i(\Xi(v)) & \text{if } v\in V(S) \text{ and } S\in \mathcal{F}_i 
        \\ 0 &\text{otherwise} 
    \end{cases}.
$$
We now construct $G' = (V',E',a',b')$; the process is summarized in Figure \ref{fig:auxgraph}. First, $V'$ is obtained from $V$ by deleting every vertex outside $\aom{\lambda}{G} \cup \partial \aom{\lambda}{G}$, and contracting each Aomoto tree $T_i$ to a single vertex $t_i$. Write $\{t_1,...,t_p\} = U \subset V'$, and identify $\partial \aom{\lambda}{G}$ with $\partial U$. Now, for each $v \in \partial U = \partial \aom{\lambda}{G}$ and each edge $e \in \tau(v) \subset E$ whose source is in a tree $T_i$, include an edge $e' \in E'$ with $\tau(e') = v$ and $\sigma(e') = t_i$, and set its weight as
\begin{equation}
    \label{eq:new-edge-weights}
    a_{e'} = a_e \zeta_i(\sigma(e)).
\end{equation}
This process is mirrored to construct an edge $f' \in E'$ from any $f \in \sigma(v) \subset E$ whose terminal is in $T_i$; no other edges are included. Finally, the potential $b'$ is identically zero. 

\begin{figure}
    \centering
   \includegraphics[scale=.85]{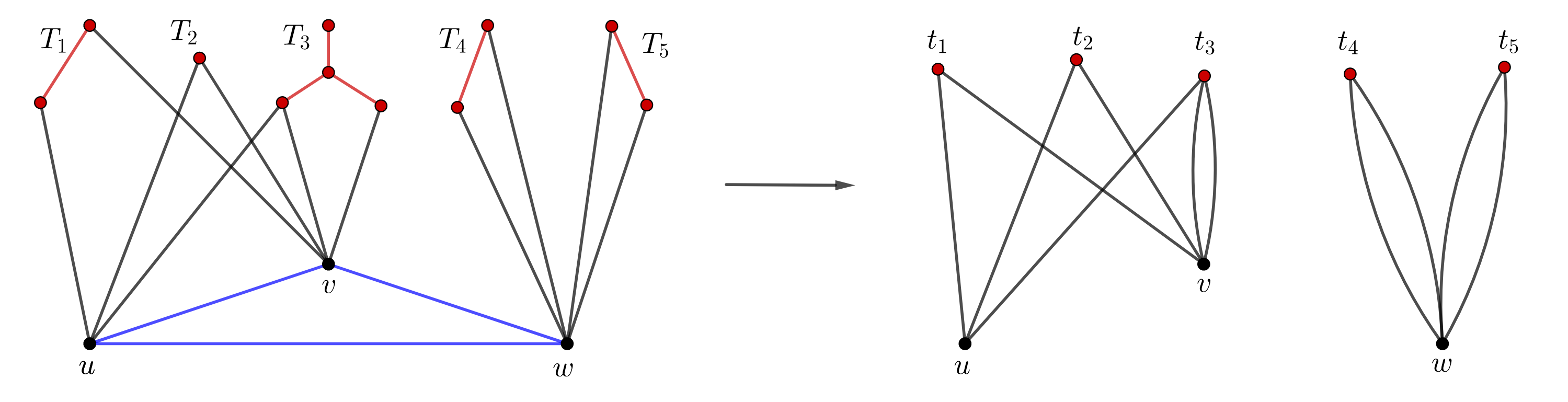}
    \caption{On the left an example of a graph $G$ with Aomoto trees in red. On the right its auxiliary graph $G'$, where each tree $T_i$ has been contracted into a vertex $t_i$ and the blue edges have been removed.  }
    \label{fig:auxgraph}
\end{figure}

We have arranged things so that $G'$ is bipartite, with connected components $G'_1,...,G'_m$, whose respective covers we will denote $\T'_1,...,\T'_m$. We may also construct a new infinite graph $\T' = (\calV',\calE',\a',\b')$ from $\T$ analogously to the construction of $G'$ from $G$: by deleting the fibres over any vertex outside $\aom{\lambda}{G}\cup \partial \aom{\lambda}{G}$, contracting each tree $S \in \F \subset \T$ into a single vertex $u_S$, including for each $e \in \calE$ with ends in $\Xi^{-1}(\aom{\lambda}{G})$ and $\Xi^{-1}(\partial \aom{\lambda}{G})$ a corresponding edge $e' \in \calE'$ with ends in the contraction if $\Xi^{-1}(\aom{\lambda}{G})$ and its boundary, and reweighting any such edge according to \eqref{eq:new-edge-weights}. This $\T'$ consists of countably many copies of each $\T'_j$, and is a cover of $G'$ via a map $\Xi'$; note that $(\Xi')^{-1}(U) = \{u_S : S \in \calF\}$, the contraction of $\Xi^{-1}(\aom{\lambda}{G})$. With this setup and the decomposition in  \eqref{eq:eigdecomposition}, any $\eta\in \Ker(\lambda - \jacobi{\T})$ gives rise to a vector  $\eta'\in \ell^2(V(\T'))$ in a natural way:
\begin{equation}
    \label{eq:ftilde}
     \eta = \sum_{S\in \F} \alpha_S \zeta_S \mapsto \eta' =\sum_{S\in \F} \alpha_S \delta_{u_S}.
\end{equation}

\begin{observation}
    \label{obs:eta'}
    The map $\eta \mapsto \eta'$ is an isometric inclusion of $\Ker(\lambda - \jacobi{\T})$ in $\Ker \jacobi{\T'}$.
\end{observation}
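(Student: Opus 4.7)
The plan is to verify three properties of the map $\eta \mapsto \eta'$: linearity (immediate from the explicit formula \eqref{eq:ftilde}), isometry, and that $\eta'$ lies in $\Ker \jacobi{\T'}$. I would start with the isometry: since each $\zeta_S$ is a unit vector by Lemma \ref{lem:nonzeroAomoto}, and the supports $\{V(S)\}_{S \in \F}$ are pairwise disjoint, the family $\{\zeta_S\}_{S \in \F}$ is orthonormal in $\ell^2(\calV)$. Thus $\|\eta\|^2 = \sum_S |\alpha_S|^2 = \|\eta'\|^2$, which simultaneously guarantees $\eta' \in \ell^2(\calV')$.

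The rest of the plan is to show $\jacobi{\T'} \eta' = 0$ at every vertex of $\T'$, handled case by case using the bipartition of $\T'$ into $(\Xi')^{-1}(U)$ (the contracted-tree vertices $u_S$) and $(\Xi')^{-1}(\partial U)$ (the former boundary vertices of $\aom{\lambda}{G}$). At a vertex $u_S$ the argument is cheap: every neighbor of $u_S$ in $\T'$ lies in $(\Xi')^{-1}(\partial U)$, and therefore sits outside $\Xi^{-1}(\aom{\lambda}{G})$, so $\eta$ vanishes there by Proposition \ref{prop:equivallencesforeigs}\eqref{prop:equivallencesforeigs-v} and $\eta'$ inherits this vanishing by construction; combined with $b'_{u_S} = 0$, the kernel equation holds trivially.

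The substantive case is at a boundary vertex $v \in (\Xi')^{-1}(\partial U)$. Here I would use $\eta(v) = 0$ to rewrite the $\lambda$-eigenvalue equation in $\T$ as $\sum_{e \in \tau(v)} \a_e \eta(\sigma(e)) = 0$, and split the sum according to whether $\sigma(e)$ lies in some $V(S)$ with $S \in \F$ or not. Terms of the second kind vanish again by Proposition \ref{prop:equivallencesforeigs}\eqref{prop:equivallencesforeigs-v}; for terms of the first kind, the decomposition \eqref{eq:eigdecomposition} gives $\eta(\sigma(e)) = \alpha_S \zeta_i(\Xi(\sigma(e)))$ with $S \in \F_i$, and the reweighting \eqref{eq:new-edge-weights} is designed precisely so that $\a_e \eta(\sigma(e)) = \a'_{e'}\, \eta'(\sigma(e'))$ for the corresponding edge $e' \in \calE'$. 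With $b' \equiv 0$, this rewrites the vanishing of $(\jacobi{\T}\eta)(v)$ as exactly $(\jacobi{\T'}\eta')(v) = 0$.

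The only genuine work is bookkeeping the correspondence between edges of $\T$ and of $\T'$ under the reweighting \eqref{eq:new-edge-weights}; the whole construction of $G'$ is engineered to make the final line automatic, so I do not expect a real obstacle, and no analytic input beyond Lemma \ref{lem:nonzeroAomoto} and Proposition \ref{prop:equivallencesforeigs}\eqref{prop:equivallencesforeigs-v} is required.
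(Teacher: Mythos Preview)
Your proposal is correct and follows essentially the same route as the paper: a quick isometry check using orthonormality of the $\zeta_S$, then a two-case verification of $\jacobi{\T'}\eta'=0$ exploiting that $\eta'$ is supported on $(\Xi')^{-1}(U)$ and that the reweighting \eqref{eq:new-edge-weights} converts the $\lambda$-eigenvalue equation at boundary vertices into the kernel equation for $\jacobi{\T'}$. Your write-up is in fact slightly more explicit than the paper's about why the $\zeta_S$ are orthonormal (disjoint supports), but the structure and ingredients are identical.
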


\begin{proof}
    Preservation of norm is immediate since $\zeta_S$ is a unit vector, and since the map is an isometry it is injective; it remains only to show that $\Ker(\lambda - \jacobi{\T})$ is mapped to $\Ker \jacobi{\T'}$. The vector $\eta'$ is identically zero on the fibre over $\partial U$ and thus $(\jacobi{T'}\eta')(u) = 0$ for any $u \in (\Xi')^{-1}(U)$. It remains only to consider $v \in (\Xi')^{-1}(\partial U)$, which as above we may identify with $\Xi^{-1}(\partial \aom{\lambda}{G}) \subset \calV$. For each edge $e \in \tau(v) \subset \calE$ write $S_e$ for the tree in $\F$ to which $\sigma(e)$ belongs, so that the reweighting in \eqref{eq:new-edge-weights} gives $a'_{e'} = a_e \zeta_{S_e}(\sigma(e))$. As the potential $b'$ is identically zero and $\eta$ and $\eta'$ vanish outside the fibres over $\aom{\lambda}{G}$ and $U$ respectively, we have
    \begin{align*}
        (\jacobi{\T'}\eta')(v')
        &= \b_{v'} \eta'(v') + \sum_{e' \in \tau(v') \subset \calE'} \a_{e'}' \eta'(\sigma(e')) \\
        &= \sum_{\substack{e \in \tau(v) \subset \calE : \\ \sigma(e) \in \Xi^{-1}(\aom{\lambda}{G})}} \a_e \zeta_{S_e}(\sigma(e)) \alpha_{S_e} \\ 
        &= \b_v \eta(v) + \sum_{e \in \tau(v) \subset \calE} \a_e \eta(\sigma(e)) \\
        &= (\jacobi{\T}\eta)(v) \\
        &= \lambda \eta(v) = 0.
    \end{align*}
    In the third line, note that some edges in $\tau(v) \subset \calE$ have a source outside of the fibre over $\aom{\lambda}{G}$, but that $\eta$ is identically zero there.
\end{proof}

Immediately from this observation, we can conclude that $0 \in \Spec_p \jacobi{\T'}$. Moreover, as $\T'$ is comprised of disjoint copies of the $\T'_j$'s, $\jacobi{\T'}$ restricts to $\jacobi{\T'_j}$ on each one, and thus $0 \in \Spec_p \jacobi{\T'_j}$ for at least one $\T'_j$. Our next observation characterizes the associated Aomoto set on $G'_j$. Recall that $G'$ is bipartite with vertex classes $U$ and $\partial U$, and let us write $U_j$ and $\partial U_j$ for the corresponding classes of vertices in each connected component $G'_j$.

\begin{observation}
    \label{obs:aomoto-of-aux-graph}
    $X_0(G_j') = U_j$. 
\end{observation}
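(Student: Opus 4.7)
The plan is to establish the equality by two containments: $U_j \subseteq X_0(G'_j)$ by constructing $0$-eigenvectors of $\jacobi{\T'}$ that do not vanish at prescribed contracted vertices, and $X_0(G'_j) \subseteq U_j$ by combining the bipartite structure of $G'_j$ with Observation~\ref{lem:aomoto-trees-zero}.

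For the inclusion $U_j \subseteq X_0(G'_j)$, I would fix $u_S \in U_j$ corresponding to some $S \in \F$ which is a lift of an Aomoto tree $T_i$, choose an arbitrary vertex $v \in V(S)$, and invoke characterization~\eqref{prop:equivallencesforeigs-v} of Proposition~\ref{prop:equivallencesforeigs} to obtain $\eta \in \Ker(\lambda - \jacobi{\T})$ with $\eta(v) \neq 0$; this is possible because $\Xi(v) \in V(T_i) \subseteq X_\lambda(G)$. Writing the decomposition $\eta = \sum_{T \in \F} \alpha_T \zeta_T$ from~\eqref{eq:eigdecomposition} and using that each $\zeta_T$ is supported on $V(T)$, one finds $\eta(v) = \alpha_S \zeta_S(v)$; Lemma~\ref{lem:nonzeroAomoto} guarantees $\zeta_S(v) \neq 0$, so $\alpha_S \neq 0$. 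Observation~\ref{obs:eta'} then produces $\eta' = \sum_{T \in \F} \alpha_T \delta_{u_T} \in \Ker \jacobi{\T'}$ satisfying $\eta'(u_S) = \alpha_S \neq 0$. Restricting $\eta'$ to the copy of $\T'_j$ inside $\T'$ that contains $u_S$ yields a nonzero element of $\Ker \jacobi{\T'_j}$ not vanishing at $u_S$, so part~\eqref{prop:equivallencesforeigs-v} of Proposition~\ref{prop:equivallencesforeigs} applied to the cover $\T'_j \to G'_j$ places $u_S \in X_0(G'_j)$.

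For the reverse containment, the previous step in particular yields $0 \in \Spec_p \jacobi{\T'_j}$, and since $b' \equiv 0$ on $G'$ by construction, Observation~\ref{lem:aomoto-trees-zero} forces $X_0(G'_j)$ to be an independent set in $G'_j$. The graph $G'_j$ is bipartite with parts $U_j$ and $\partial U_j$ by construction, and by the very definition of $\partial U_j$ every vertex in $\partial U_j$ has at least one neighbor in $U_j$. Since $X_0(G'_j)$ is independent and already contains $U_j$, it cannot include any vertex of $\partial U_j$, which gives $X_0(G'_j) \subseteq U_j$.

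I expect the main obstacle to be the first containment: one has to \emph{exhibit} a specific $0$-eigenvector of $\jacobi{\T'}$ which is nonzero at a prescribed contracted vertex, and the decomposition~\eqref{eq:eigdecomposition} is doing all the heavy lifting here. For any $v$ lying in a single Aomoto-tree lift $S$, the value $\eta(v)$ is exactly the coefficient $\alpha_S$ times the nonvanishing scalar $\zeta_S(v)$, and under the inclusion of Observation~\ref{obs:eta'} this coefficient is precisely the value of $\eta'$ at $u_S$. Once this is in hand, the reverse containment is a short combinatorial argument from the bipartite structure of $G'_j$ and Observation~\ref{lem:aomoto-trees-zero}.
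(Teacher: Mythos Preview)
Your proposal is correct and follows essentially the same approach as the paper: the paper also obtains $U_j \subset X_0(G'_j)$ via Proposition~\ref{prop:equivallencesforeigs}, the map $\eta \mapsto \eta'$, and Observation~\ref{obs:eta'}, and obtains the reverse inclusion from Observation~\ref{lem:aomoto-trees-zero} together with the fact that $U_j$ is a maximal independent set in $G'_j$. Your write-up simply unpacks the first inclusion in more detail (explicitly tracking the coefficient $\alpha_S$), which is perfectly fine.
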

 
\begin{proof}
    By Proposition \ref{prop:equivallencesforeigs}, the definition of the map $\eta \mapsto \eta'$, and Observation \ref{obs:eta'}, we immediately have the inclusion $U_j \subset X_0(G_j')$, since any $\eta \in \Ker(\lambda - \jacobi{\T})$ maps to $\eta'$ supported only on the fibre over $U_j$. On the other hand, from Lemma \ref{lem:aomoto-trees-zero}, we know that $X_0(G'_j)$ is an independent set, and $U_j$ is a maximal independent set in $G'_j$ by definition of $\partial U_j$.
\end{proof}

Finally, we can strengthen Observation \ref{obs:eta'}

\begin{observation}
    \label{obs:eta'-iso}
    The map $\eta \mapsto \eta'$ gives an isomorphism between $\Ker (\lambda - \jacobi{\T})$ and $\Ker \jacobi{\T'}$.
\end{observation}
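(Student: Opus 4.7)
The plan is to show surjectivity by constructing an explicit preimage for each $\xi \in \Ker \jacobi{\T'}$; Observation \ref{obs:eta'} already gives injectivity via isometry.

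First, by Observation \ref{obs:aomoto-of-aux-graph} together with the general principle (Proposition \ref{prop:equivallencesforeigs}\eqref{prop:equivallencesforeigs-v}) that eigenvectors vanish on fibres over vertices outside the Aomoto set, any $\xi \in \Ker \jacobi{\T'}$ is supported on $\{u_S : S \in \F\}$. Setting $\alpha_S := \xi(u_S)$, I define the candidate preimage
\[
    \eta := \sum_{S \in \F} \alpha_S \zeta_S,
\]
mirroring the decomposition \eqref{eq:eigdecomposition}. It is immediate from the definition of the map $\eta \mapsto \eta'$ that $\eta' = \xi$, so the entire proof reduces to verifying $\eta \in \Ker(\lambda - \jacobi{\T})$.

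The verification of $(\jacobi{\T}\eta)(v) = \lambda \eta(v)$ splits by the type of $v \in \calV$. When $v$ lies outside $\Xi^{-1}(\aom{\lambda}{G} \cup \partial \aom{\lambda}{G})$, $\eta$ vanishes at $v$ and at all its neighbors, so both sides are zero. When $v \in V(S)$ for some $S \in \F_i$, the fact that distinct Aomoto trees are vertex-disjoint connected components of the subgraph induced by $\aom{\lambda}{G}$---and hence their lifts in $\T$ are vertex-disjoint---ensures every neighbor of $v$ lies either in $V(S)$ or in $\Xi^{-1}(\partial\aom{\lambda}{G})$, on which $\eta$ vanishes. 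The remaining sum then corresponds, via the covering isomorphism between $S$ and $T_i$, exactly to $\alpha_S (\jacobi{T_i}\zeta_i)(\Xi(v)) = \lambda \alpha_S \zeta_i(\Xi(v)) = \lambda \eta(v)$, using $\zeta_i \in \Ker(\lambda - \jacobi{T_i})$ from Lemma \ref{lem:nonzeroAomoto}.

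The main obstacle is the boundary case $v \in \Xi^{-1}(\partial\aom{\lambda}{G})$, where $\eta(v) = 0$ and one must show $\sum_{e \in \tau(v)} \a_e \eta(\sigma(e)) = 0$. This is where the reweighting \eqref{eq:new-edge-weights} enters. Identifying $v$ with its corresponding vertex $v' \in \calV'$, each edge $e \in \tau(v)$ whose source lies in some $V(S_e)$ corresponds to an edge $e' \in \tau(v') \subset \calE'$ with $\sigma(e') = u_{S_e}$ and weight $\a'_{e'} = \a_e \zeta_{S_e}(\sigma(e))$; all other edges in $\tau(v)$ contribute nothing since $\eta$ vanishes outside $\bigcup_{S} V(S)$. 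Using $\b' \equiv 0$ and $\xi(v') = 0$, the sum becomes
\[
    \sum_{e \in \tau(v)} \a_e \eta(\sigma(e))
    = \sum_{e' \in \tau(v')} \a'_{e'}\, \xi(\sigma(e'))
    = (\jacobi{\T'}\xi)(v') = 0,
\]
completing the verification and establishing the isomorphism.
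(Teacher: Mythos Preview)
Your proof is correct and follows essentially the same approach as the paper's: define the inverse map by sending $\xi = \sum_S \alpha_S \delta_{u_S}$ to $\sum_S \alpha_S \zeta_S$, observe via Observation~\ref{obs:aomoto-of-aux-graph} (applied on each copy of $\T'_j$ in $\T'$) that every $\xi \in \Ker \jacobi{\T'}$ has this form, and then check that the image lies in $\Ker(\lambda - \jacobi{\T})$. The paper abbreviates this last verification as ``a parallel argument to Observation~\ref{obs:eta'}''; you have simply carried out that parallel argument in full.
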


\begin{proof}
    We noted above that $\jacobi{\T'}$ decomposes as a direct sum of the Jacobi operators on the copies of $\T'_j$ comprising $\T'$. By applying Observation \ref{obs:aomoto-of-aux-graph} separately to each copy, any $\theta \in \Ker \jacobi{\T'}$ is supported only on $(\Xi')^{-1}(U)$. Thus the adjoint of the map $\eta \mapsto \eta'$ takes any vector $\theta \in \Ker \jacobi{\T'}$ to one in $\ell^2(V(\T))$:
    $$
        \theta = \sum_{S \in \calF} \alpha_S \delta_{u_S} \mapsto  \sum_{S \in \calF} \alpha_S \zeta_S.
    $$
    Once again this is clearly injective and norm-preserving, and a parallel argument to Observation~\ref{obs:eta'} shows that it takes $\Ker \jacobi{\T'}$ into $\Ker(\lambda - \jacobi{\T})$.
\end{proof}

We can finally relate the density of states of $\jacobi{\T}$ to those of the $\jacobi{\T'_j}$.

\begin{observation}
    \label{obs:dos-aux}
    Let $T$ be an Aomoto tree in $G$, and $t \in V(G'_j)$ its contraction in a component $G'_j$ of $G'$. Writing $\mu_t'$ for the spectral measure of $t$ in $\jacobi{\T'_j}$, and for $\mu_v$ for the spectral measure of $\jacobi{\T}$ for each $v \in V(T) \subset V(G)$, we have
    $$
        \sum_{v \in V(T)} \mu_v \{\lambda\} = \mu_t'\{0\}. 
    $$
\end{observation}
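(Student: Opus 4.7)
The plan is to combine the atom formula of Lemma \ref{lem:atom-basis} with the isometric isomorphism $\Ker(\lambda - \jacobi{\T}) \cong \Ker \jacobi{\T'}$ provided by Observation \ref{obs:eta'-iso}, reducing everything to a direct computation in the coefficient expansion \eqref{eq:eigdecomposition}.

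First I would fix notation. Take $T = T_i$ with contraction $t \in V(G'_j)$, choose one fixed copy $S \in \F_i \subset \F$ lifting $T$, and let $\tilde t = u_S \in (\Xi')^{-1}(t)$. Since the restriction $\Xi\!\mid_S : V(S) \to V(T)$ is a bijection, for each $v \in V(T)$ there is a canonical lift $\tilde v \in V(S)$. Now pick any orthonormal basis $\frB$ of $\Ker(\lambda - \jacobi{\T})$, and expand each element as $\eta = \sum_{S' \in \F} \alpha^\eta_{S'}\zeta_{S'}$ per \eqref{eq:eigdecomposition}, so that $\eta' = \sum_{S' \in \F}\alpha^\eta_{S'}\delta_{u_{S'}}$ under the map of \eqref{eq:ftilde}. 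By Observation \ref{obs:eta'-iso} the collection $\frB' = \{\eta' : \eta \in \frB\}$ is an orthonormal basis of $\Ker \jacobi{\T'}$, and this basis restricts to one of $\Ker \jacobi{\T'_j}$ after discarding the elements supported on components other than the one containing $u_S$ (those elements vanish at $u_S$ and thus contribute nothing below).

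Next I would apply Lemma \ref{lem:atom-basis} on both sides. On the $G'$ side,
\begin{equation*}
    \mu'_t\{0\} = \sum_{\eta \in \frB}|\eta'(u_S)|^2 = \sum_{\eta \in \frB}|\alpha^\eta_S|^2,
\end{equation*}
since $\eta'(u_{S'}) = \alpha^\eta_{S'}$ by construction. On the $\T$ side, for each $v \in V(T)$ the canonical lift $\tilde v \in V(S)$ satisfies $\zeta_{S'}(\tilde v) = 0$ for $S' \neq S$ and $\zeta_S(\tilde v) = \zeta_i(v)$, so
\begin{equation*}
    \mu_v\{\lambda\} = \sum_{\eta \in \frB}|\eta(\tilde v)|^2 = |\zeta_i(v)|^2 \sum_{\eta \in \frB}|\alpha^\eta_S|^2.
\end{equation*}

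Finally I would sum over $v \in V(T)$ and use that $\zeta_i$ has unit norm:
\begin{equation*}
    \sum_{v \in V(T)} \mu_v\{\lambda\} = \Bigl(\sum_{v \in V(T)} |\zeta_i(v)|^2\Bigr) \sum_{\eta \in \frB}|\alpha^\eta_S|^2 = \sum_{\eta \in \frB}|\alpha^\eta_S|^2 = \mu'_t\{0\}.
\end{equation*}
No step presents real difficulty; the only subtle point is bookkeeping the choice of lift $\tilde v$ consistently within a single copy $S$ so that the decomposition \eqref{eq:eigdecomposition} evaluates cleanly, and verifying that the isomorphism of Observation \ref{obs:eta'-iso} legitimately transports orthonormal bases (which is immediate from its being an isometry).
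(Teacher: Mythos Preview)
Your proof is correct and follows essentially the same route as the paper's: both combine Lemma~\ref{lem:atom-basis} with the isometric isomorphism of Observation~\ref{obs:eta'-iso} and the unit norm of $\zeta_i$, reducing to the identity $|\eta'(u_S)|^2 = \sum_{\tilde v \in V(S)} |\eta(\tilde v)|^2$. The only cosmetic difference is the direction in which the basis is chosen: the paper starts from an orthonormal basis $\frB'_j$ of $\Ker \jacobi{\T'_j}$, pulls back, and extends to a basis of $\Ker(\lambda - \jacobi{\T})$, whereas you start from a basis of $\Ker(\lambda - \jacobi{\T})$ and push forward to one of $\Ker \jacobi{\T'}$, implicitly using that the spectral measure at $u_S$ for $\jacobi{\T'}$ coincides with $\mu'_t$ for $\jacobi{\T'_j}$. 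Your parenthetical about ``restricting'' $\frB'$ to the component is not quite accurate (basis elements of $\Ker \jacobi{\T'}$ need not be supported on single components), but it is also unnecessary: your actual computation uses the full sum over $\frB'$, which is exactly what Lemma~\ref{lem:atom-basis} requires.
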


\begin{proof}
    Choose a copy $\tilde T$ of $T$ in its fibre in $\T$, and let $\tilde t$ be the contraction of $\tilde T$ in $\T'_j \subset \T'$. By construction, for each $\eta \in \ker(\lambda - \jacobi{\T})$, 
    $$
        \eta'(\tilde t)^2 = \sum_{\tilde v \in V(\tilde T)} \eta(\tilde v)^2.
    $$
    Now, let $\frB'_j$ be an orthonormal basis of $\Ker \jacobi{\T'_j}$. By Observation \ref{obs:eta'-iso} this is the image of some orthonormal set $\frB_j$ in $\Ker(\lambda - \jacobi{\T})$. In particular, recalling our construction of $\T'$ from $\T$ by deleting vertices and contracting Aomoto trees, our chosen copy of $\T'_j$ in $\T'$ pulls back to a subtree $\T_j$ of $\T$ containing $\tilde T$. Moreover, $\frB_j$ is an orthonormal basis for the orthogonal projection of $\Ker(\lambda - \jacobi{\T})$ to the subspace of $\ell^2(\calV)$ supported on the vertices of $\T_j$, and we can therefore augment $\frB_j$ to an orthonormal basis $\frB$ of $\Ker(\lambda - \jacobi{\T})$, whose additional vectors vanish on $\T_j$.
    
    We now use Lemma \ref{lem:atom-basis} to compute
    \begin{align*}
        \sum_{v \in V(T)}\mu_v\{\lambda\} 
        = \sum_{\tilde v\in V(\tilde T)}\sum_{\eta \in \frB} \eta(\tilde v)^2
        = \sum_{\tilde v \in V(\tilde T)}\sum_{\eta \in \frB_j}\eta(\tilde v)^2
        = \sum_{\eta \in \frB_j} \eta'(\tilde t)^2 
        = \sum_{\eta' \in \frB_j'} \eta'(\tilde t)^2
        = \mu_t'\{0\}.
    \end{align*}
\end{proof}

\subsection{Analyzing the Auxiliary Graph} 

This section is devoted to the final observation of our proof:

\begin{observation}
    \label{obs:dos-ind}
    Fix $j \in [m]$ and assume that $0 \in \Spec_p \jacobi{\calT'_j}$. Then
    $$
        \sum_{t \in U_j} \mu'_t \{0\} = I_0(G'_j).
    $$
\end{observation}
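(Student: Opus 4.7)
The plan is to reduce the claim to the identity $|V(G'_j)|\mu'_j\{0\} = |U_j|-|\partial U_j|$ and then to establish the latter via the bipartite block structure of $A_{\T'_j}$ and a finite-approximation argument.

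For the reduction, Observation~\ref{obs:aomoto-of-aux-graph} gives $\aom{0}{G'_j} = U_j$, so Proposition~\ref{prop:equivallencesforeigs} yields $\mu'_v\{0\} = 0$ for every $v \in \partial U_j$; hence $\sum_{t\in U_j}\mu'_t\{0\} = |V(G'_j)|\mu'_j\{0\}$. Since $b'\equiv 0$, Observation~\ref{lem:aomoto-trees-zero} shows $U_j$ is an independent set, so each of its connected components is a single vertex, giving $\cc(U_j) = |U_j|$ and $\ind{0}{G'_j} = |U_j|-|\partial U_j|$. Next, decompose $\ell^2(\mathcal{V}'_j) = \ell^2((\Xi')^{-1}(U_j))\oplus \ell^2((\Xi')^{-1}(\partial U_j))$ and, using bipartiteness with $b'\equiv 0$, write
\[
A_{\T'_j} = \begin{pmatrix} 0 & \mathcal{M} \\ \mathcal{M}^* & 0\end{pmatrix}, \quad \text{so} \quad \Ker A_{\T'_j} = \Ker \mathcal{M}\oplus\Ker \mathcal{M}^*.
\]
By $\aom{0}{G'_j} = U_j$ and Proposition~\ref{prop:equivallencesforeigs}\eqref{prop:equivallencesforeigs-v}, every vector in $\Ker A_{\T'_j}$ vanishes on $(\Xi')^{-1}(\partial U_j)$, so $\Ker \mathcal{M} = 0$.

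Now apply Lemma~\ref{lem:lifts-weak-conv} to extract a sequence of $n$-lifts $L_n$ of $G'_j$ with girth tending to infinity and empirical spectral measures $\mu'_{j,n}$ converging weakly to $\mu'_j$. Each $L_n$ is bipartite with block matrix $\mathcal{M}_n$ of size $n|U_j|\times n|\partial U_j|$, and
\[
\dim\Ker A_{L_n} = n|V(G'_j)| - 2\,\mathrm{rank}(\mathcal{M}_n) \geq n(|U_j|-|\partial U_j|) = n\,\ind{0}{G'_j},
\]
with equality precisely when $\mathcal{M}_n$ has full column rank.

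The key step---and main obstacle---is to show $\mathrm{rank}(\mathcal{M}_n) = n|\partial U_j|$ for all sufficiently large $n$. Suppose for contradiction there is a nonzero $z \in \ell^2(\partial U_j^{(n)})$ with $\mathcal{M}_n z = 0$; its trivial extension is a $\partial U_j^{(n)}$-supported element of $\Ker A_{L_n}$. A localization argument using the large girth (so that a neighborhood of $\supp(z)$ in $L_n$ is isomorphic to the corresponding neighborhood in $\T'_j$) should yield a nonzero $\ell^2$ vector in $\Ker A_{\T'_j}$ supported on $(\Xi')^{-1}(\partial U_j)$, contradicting $\Ker \mathcal{M} = 0$. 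Given this, $\mu'_{j,n}\{0\} = \ind{0}{G'_j}/|V(G'_j)|$ exactly for large $n$, and Portmanteau applied to the closed set $\{0\}$ yields $\mu'_j\{0\}\geq \ind{0}{G'_j}/|V(G'_j)|$. The matching upper bound then follows by carefully interpreting the \emph{trace per fundamental domain} of the projection onto $\overline{\mathrm{Image}(\mathcal{M})}$ as $|\partial U_j|$---this is the rank--nullity identity in the $\Gamma$-equivariant infinite-dimensional setting, combined with the bipartite symmetry of Lemma~\ref{lem:symmetryofspectrum}---yielding $|V(G'_j)|\mu'_j\{0\} \leq |U_j|-|\partial U_j|$ and completing the proof.
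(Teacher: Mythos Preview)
Your reduction and setup are fine, and you correctly isolate the key structural fact $\Ker\mathcal{M}=0$. The trouble is in the two steps you flag as ``key'' and ``carefully interpreting.''

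\textbf{The localization step does not work.} Large girth only guarantees that \emph{balls of bounded radius} in $L_n$ embed in $\T'_j$. A nonzero $z\in\Ker\mathcal{M}_n$ can have support spread across all of $L_n$, so there is no neighborhood of $\supp(z)$ isomorphic to a piece of the tree, and no way to lift $z$ to an $\ell^2$ element of $\Ker\mathcal{M}$. (Pulling back periodically is not $\ell^2$; truncating destroys the kernel property.) In fact this step is unnecessary even for your lower bound: the inequality $\mu'_{j,n}\{0\}\ge \ind{0}{G'_j}/|V(G'_j)|$ already follows from rank--nullity without full column rank, and Portmanteau then gives $\mu'_j\{0\}\ge \ind{0}{G'_j}/|V(G'_j)|$.

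\textbf{The upper bound is the real gap.} Your ``trace per fundamental domain'' sketch is a von Neumann dimension argument: with $\dim_\Gamma$ the $\Gamma$-dimension, $\Ker\mathcal{M}=0$ gives $\dim_\Gamma\overline{\mathrm{Im}\,\mathcal{M}}=|\partial U_j|$, hence $\dim_\Gamma\Ker\mathcal{M}^*=|U_j|-|\partial U_j|$ and the result. This is correct but imports substantial operator-algebraic machinery that the paper never sets up.

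The paper avoids both problems with one clean trick. At the finite level, since $Z_nZ_n^T$ and $Z_n^TZ_n$ share the same nonzero spectrum, there is an \emph{exact identity of measures}
\[
\mu_{Z_nZ_n^T}=\Bigl(1-\tfrac{\ind{0}{G'_j}}{|U_j|}\Bigr)\mu_{Z_n^TZ_n}+\tfrac{\ind{0}{G'_j}}{|U_j|}\,\delta_0,
\]
regardless of whether $\mathcal{M}_n$ has full column rank. Both sides converge weakly (to $\nu_{U_j}$ and to $(1-c)\nu_{\partial U_j}+c\delta_0$ respectively), so the identity persists in the limit. Only then does one use $\nu_{\partial U_j}\{0\}=0$, which is exactly your fact $\Ker\mathcal{M}=0$ read through the Aomoto set, to extract $\nu_{U_j}\{0\}=\ind{0}{G'_j}/|U_j|$ on the nose. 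No rank computation on $L_n$, no von Neumann dimension, and equality rather than two inequalities.
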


\noindent This will finish the proof, as combining Observations \ref{obs:dos-aux} and \ref{obs:dos-ind} and recalling the construction of $G'$ gives
\begin{align*}
    |V(G)| \cdot \mu\{\lambda\} &= \sum_{u \in \aom{\lambda}{G}} \mu_u\{\lambda\} = \sum_{t \in U} \mu_t'\{0\} \\
    &= \sum_{j \in [m]} \ind{0}{G'_j} = \sum_{j \in [m]} |U_j| - |\partial U_j| \\
    &= |U| - |\partial U| = \cc \aom{\lambda}{G} - |\partial \aom{\lambda}{G}| \\
    &= \ind{\lambda}{G}.
\end{align*}

\begin{proof}[Proof of Observation \ref{obs:dos-ind}]
    Let $\mu'$ be the DOS of $A_{\T_j'}$. Let $L_1, L_2, \dots$ be a sequence of finite lifts of $G_j'$ with covering maps $\xi_n: L_n\to G_j'$. By Lemma \ref{lem:lifts-weak-conv} we may choose the $L_n$ with girth going to infinity. Since $G_j'$ is bipartite with zero potential, the Jacobi matrices $A_{L_n}$ and $A_{\T_j'}$ have the following block structure
    $$
        A_{L_n} =
        \begin{pmatrix}
        0 &  Z_n^T \\
        Z_n & 0
        \end{pmatrix} 
        \quad \text{and} \quad 
        A_{\T_j'} = \begin{pmatrix}
            0 &  Z_\infty^T \\
            Z_\infty & 0
        \end{pmatrix},
    $$
    where for $n\in \mathbb{N}\cup \{\infty\}$ the domain and range of $Z_n$ correspond to the fibers of $\partial U_j$ and $U_j$ respectively. Note that $A_{L_n}^2 = Z_n^TZ_n \oplus Z_n Z_n^T$ and $A_{\T_j'}^2 = Z_\infty^T Z_\infty \oplus Z_\infty Z_\infty^T $.
    
    Let $\mu_{Z_nZ_n^T}$ and $\mu_{Z_n^TZ_n}$ be the empirical spectral distributions of $Z_nZ_n^T$ and $Z_n^TZ_n$ respectively. Fix a positive integer $k$ and note that, since $L_n$ is bipartite, the terms in $\tr(Z_n^T Z_n)^k$ are in one-to-one correspondence with the closed walks of length $2k$ in $L_n$ that start and end at the same vertex in $\xi_n^{-1}(\partial U_j)$. Moreover, by the girth assumption, for large enough $n$ it holds that the value of the diagonal entries of  $A_{L_n}^{2k}$ are constant on each fiber $\xi_n^{-1}(v)$ for every $v\in V(G_j')$ and coincide with the respective diagonal entries of $A_{\T_j'}^{2k}$. Hence, if we write $\nu_v$ for the spectral measure of $u$ for the operator $\jacobi{\T'_j}^2$, then by the method of moments $\mu_{Z_n^TZ_n}$ and $\mu_{Z_n Z_n^T}$ converge weakly to
    \begin{equation}
        \label{eq:nu-def}
        \nu_{\partial U_j} = \frac{1}{|\partial U_j|}\sum_{v \in \partial U_j}\nu_v \qquad \text{ and } \qquad \nu_{U_j} = \frac{1}{|U_j|}\sum_{v \in U_j} \nu_v.
    \end{equation}
    
    Since $\aom{0}{G'_j} = U_j$, equation \eqref{eq:nu-def} implies $\nu_{\partial U_j}\{0\} = 0$. If it were the case that $|\partial U_j| > |U_j|$, we would have by standard properties of matrices that the spectrum of $Z_n^T Z_n$ is equal to that of $Z_n Z_n^T$, plus the eigenvalue zero with multiplicity at least $|\partial U_j| - |U_j|$, and thus
    $$
        \frac{|\partial U_j| - |U_j|}{|\partial U_j|} \le \limsup_{n \to \infty} \mu_{Z_nZ_n^T}\{0\} \le \nu_{\partial U_j}\{0\} = 0,
    $$
    a contradiction. Thus $\ind{0}{G'_j} \ge 0$. Applying the same matrix property a second time, we have
    $$
        \mu_{Z_nZ_n^T} = \left(1 - \frac{\ind{0}{G'_j}}{|U_j|}\right)\mu_{Z_n^TZ_n} + \frac{\ind{0}{G'_j}}{|U_j|}\delta_0.
    $$
    By weak convergence, and as compact measures are determined by their moments,
    $$
        \nu_{U_j} = \left(1 - \frac{\ind{0}{G'_j}}{|U_j|}\right)\nu_{\partial U_j} + \frac{\ind{0}{G'_j}}{|U_j|}\delta_0,
    $$
    and thus
    $$
        \sum_{t \in U_j} \mu'_t\{0\} = |U_j| \nu_{U_j}\{0\} = \left(|U_j| - \ind{0}{G'_j}\right)\nu_{\partial U_j}\{0\} + \ind{0}{G'_j}\delta_0\{0\} = \ind{0}{G'_j}.
    $$
\end{proof}

\section{A Generalized Converse to Aomoto's Theorem}

\label{sec:gen-converse}

In this section we will prove the following generalization of Theorem \ref{thm:aomoto-converse}, and use it to prove  our other main contribution, Theorem \ref{thm:multiplicity}.

\begin{theorem}
    \label{thm:aomoto-converse-unitary}
    Let $G$ be a fintite graph, $\T$ its universal cover, $U : E(G) \to \textup{U}(n)$ a set of unitary-valued edge weights satisfying $U_e^\ast = U_{\check e}$ for every $e \in E(T)$, and $\jacobi{G,U}$ the \emph{unitary-weighted Jacobi operator} acting on $\eta \in \ell^2(V(G))\otimes \bbC^n$ as
    $$
        (\jacobi{G,U}\eta)(v) = b_v \eta(v) + \sum_{e \in \tau(v)} a_e U_e \eta(\sigma(e)) \in \bbC^n.
    $$
    If some set of vertices $X \subset V(G)$ induces an acyclic subgraph, every component of which has $\lambda$ in the spectrum of its induced unitary-weighted Jacobi operator and $\cc(X) - |\partial X| > 0$, then $\lambda \in \Spec \jacobi{G,U}$ with multiplicity at least $n(\cc(X) - |\partial X|)$.
\end{theorem}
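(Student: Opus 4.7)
The strategy is to produce an explicit $n(\cc(X) - |\partial X|)$-dimensional subspace of $\Ker(\lambda - \jacobi{G,U})$ by gluing eigenvectors of the induced operators on the components of $X$, extending by zero to $V(G) \setminus X$, and then accounting for the linear constraints imposed at boundary vertices.

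First I would handle the tree case: for each component $T_i$ of the induced subgraph on $X$, I claim $\dim \Ker(\lambda - \jacobi{T_i, U}) \ge n$. The idea is a gauge reduction. Since $T_i$ is a tree, fix a root $r_i$ and define $\Phi \colon V(T_i) \to \textup{U}(n)$ recursively by $\Phi(r_i) = I$ and $\Phi(\tau(e)) = U_e \Phi(\sigma(e))$ along each directed edge; this is well-defined because $T_i$ is simply connected, and consistent with $U_{\check e} = U_e^{\ast}$ since $U_e$ is unitary. The pointwise unitary $(W\eta)(v) := \Phi(v)^\ast \eta(v)$ then intertwines $\jacobi{T_i, U}$ with $\jacobi{T_i} \otimes I_n$, the scalar Jacobi operator tensored with the identity on $\bbC^n$. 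Because $\lambda \in \Spec \jacobi{T_i, U} = \Spec \jacobi{T_i}$ and $T_i$ is finite, $\lambda$ is a genuine eigenvalue, and $\dim \Ker(\lambda - \jacobi{T_i, U}) = n \cdot \dim \Ker(\lambda - \jacobi{T_i}) \ge n$.

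Next, write $T_1, \ldots, T_p$ for the components of $X$, where $p = \cc(X)$. For any tuple $(\zeta_1,\ldots,\zeta_p)$ with $\zeta_i \in \Ker(\lambda - \jacobi{T_i, U})$, define $\eta \colon V(G) \to \bbC^n$ by $\eta|_{V(T_i)} = \zeta_i$ and $\eta = 0$ elsewhere. The space of such $\eta$ has dimension $\sum_i \dim \Ker(\lambda - \jacobi{T_i, U}) \ge np$. I would then verify that the eigenvalue equation $(\jacobi{G,U}\eta)(v) = \lambda \eta(v)$ holds automatically at every $v \notin \partial X$: at $v \in V(T_i)$, no edge exits to another component of $X$ (else they would be the same component), and neighbors in $\partial X$ contribute zero, so the equation reduces to $(\jacobi{T_i, U}\zeta_i)(v) = \lambda \zeta_i(v)$; at $v \notin X \cup \partial X$, both sides vanish trivially. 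The only nontrivial equations occur at $v \in \partial X$, where since $\eta(v) = 0$ the requirement collapses to the $\bbC^n$-valued constraint
\begin{equation*}
    \sum_{e \in \tau(v),\, \sigma(e) \in X} a_e U_e \eta(\sigma(e)) = 0,
\end{equation*}
i.e.\ $n$ scalar conditions per boundary vertex, for $n|\partial X|$ in total.

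Subtracting, the candidate space of dimension at least $np$ is cut down by at most $n|\partial X|$ linear constraints, leaving a subspace of $\Ker(\lambda - \jacobi{G,U})$ of dimension at least $n(p - |\partial X|) = n(\cc(X) - |\partial X|)$, as desired. The main obstacle is establishing the factor of $n$: one might naively expect the kernel on each $T_i$ to be one-dimensional as in Lemma \ref{lem:treemultiplicity}, but the unitary weights generate an $n$-fold degeneracy that is essential for the stated multiplicity bound, and this degeneracy is exactly what the tree gauge trivialization unlocks.
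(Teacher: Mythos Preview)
Your proposal is correct and follows essentially the same approach as the paper: build a space of dimension at least $n\cdot\cc(X)$ from tree eigenvectors extended by zero, then impose the $n|\partial X|$ linear boundary constraints and count dimensions. Your gauge-trivialization argument for the tree case is exactly the content of the paper's Lemma~\ref{lem:tree-jacobi-unitary}, just phrased as a unitary equivalence $\jacobi{T,U} \cong \jacobi{T}\otimes I_n$ rather than as an explicit eigenvector construction; the paper's product $\prod_{e\in\gamma_v} U_e^\ast$ is precisely your $\Phi(v)^\ast$.
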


We begin with a lemma regarding unitary-weighted Jacobi operators of finite trees.

\begin{lemma}
    \label{lem:tree-jacobi-unitary}
    Let $T$ be a finite tree, $U : E(T) \to \textup{U}(n)$ a set of unitary-valued edge weights satisfying $U^\ast_e = U_{\check e}$ for every $e \in E(T)$, and $\jacobi{T,U}$ the associated unitary-weighted Jacobi operator. If $\lambda \in \Spec \jacobi{T}$, then $\lambda \in \Spec \jacobi{T,U}$ with multiplicity at least $n$.
\end{lemma}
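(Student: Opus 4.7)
The plan is to exploit the fact that on a tree, any unitary-valued edge weighting can be gauged away by a block-diagonal unitary---this is the same principle underlying Lemma \ref{lem:gauge}, but carried out at the operator level on $\bbC^n$ rather than on phases. Specifically, I would construct a unitary $W$ on $\ell^2(V(T)) \otimes \bbC^n$ of the form $(W\eta)(v) = W_v \eta(v)$ for some $W_v \in \textup{U}(n)$, such that
\begin{equation*}
    W^\ast \jacobi{T,U} W = \jacobi{T} \otimes I_n.
\end{equation*}
Once this is established, the conclusion is immediate: $\Spec \jacobi{T,U} = \Spec \jacobi{T}$, with the multiplicity of every eigenvalue multiplied by $n$, and in particular any $\lambda \in \Spec \jacobi{T}$ appears in $\Spec \jacobi{T,U}$ with multiplicity at least $n$.

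To construct the $W_v$, I would root $T$ at an arbitrary vertex $r$, set $W_r = I_n$, and define $W_v$ recursively for every other vertex by $W_v = U_e W_{p(v)}$, where $p(v)$ denotes the parent of $v$ and $e$ is the unique directed edge from $p(v)$ to $v$. Because $T$ is a tree, there is a unique path from $r$ to each vertex, so this definition is unambiguous; each $W_v$ is a product of unitaries and hence unitary. A direct computation gives
\begin{equation*}
    (W^\ast \jacobi{T,U} W \eta)(v) = b_v \eta(v) + \sum_{e \in \tau(v)} a_e \, W_v^\ast U_e W_{\sigma(e)}\, \eta(\sigma(e)),
\end{equation*}
so the claim reduces to verifying $W_v^\ast U_e W_{\sigma(e)} = I_n$ for every directed edge $e$ with $\tau(e) = v$. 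For parent-to-child edges this holds by construction of $W_v$, and for child-to-parent edges $\check e$ it follows from the compatibility condition $U_{\check e} = U_e^\ast$: one computes $W_{p(v)}^\ast U_{\check e} W_v = W_{p(v)}^\ast U_e^\ast U_e W_{p(v)} = I_n$.

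There is no real obstacle here; the only thing to be careful about is the gauge convention (choosing whether $W$ or $W^\ast$ appears in the conjugation, and matching the orientation convention $\tau(e)$ versus $\sigma(e)$) so that the recursion propagates correctly. The fact that $T$ is a tree is used in exactly one place---to guarantee that the recursion $W_v = U_e W_{p(v)}$ is consistent---and this is precisely where the argument would fail on a graph with cycles, since traversing a cycle would impose a nontrivial holonomy condition on the $U_e$.
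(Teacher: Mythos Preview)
Your proof is correct and uses essentially the same mechanism as the paper: both gauge away the unitary weights by propagating products of the $U_e$ along the unique root-to-vertex paths. The only difference is packaging---the paper builds the eigenvectors $\zeta(v) = \big(\prod_{e \in \gamma_v} U_e^\ast\big)\eta(v)\zeta_0$ directly, whereas you phrase the same computation as a block-diagonal unitary equivalence $W^\ast \jacobi{T,U} W = \jacobi{T}\otimes I_n$, which in fact yields the slightly stronger conclusion that every eigenvalue has multiplicity exactly $n$ times that in $\jacobi{T}$.
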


\begin{proof}
    As in the proof of Lemma \ref{lem:treemultiplicity}, we will choose a root $r$ of $T$, for each vertex $v$ write $p(v)$ for its unique parent and $c(v)$ for its set of children, and, since $T$ is acyclic, write $v \from u$ for the unique edge with source $u$ and terminal $v$. By absorbing $\lambda$ into the potential, it suffices to study the case when $\lambda = 0$. So, let $\eta \in \Ker \jacobi{T}$; we will produce a subspace of dimension $n$ contained in $\Ker \jacobi{T,U}$.

    Fix a vector $\zeta_0 \in \bbC^n$ and set $\zeta(r) = \zeta_0$. For each vertex $v \in V(T)$, letting $\gamma_v$ denote the directed edges in the unique shortest path from $v$ to $r$, set
    $$
        \zeta(v) = \prod_{e \in \gamma_v} U_e^\ast \cdot \eta(v) \cdot \zeta_0
    $$
    We claim that $\zeta \in \Ker \jacobi{T,U}$; since $\zeta_0$ was arbitrary, this will complete the proof. 
    
    At the root, we have 
    $$
        (\jacobi{T,U}\zeta)(r) = b_r \eta(r) \zeta_0 + \sum_{u \in c(r)} a_{r \from u} U_{r \from u} U_{u \from r} \eta(u) \zeta_0  = \left(b_r \eta(r) + \sum_{ u \in c(r)} a_{r \from u} \eta(u)\right) \zeta_0 = 0,
    $$
    since $U_{r \from u} U_{u \from r} = 1$ and  $\eta \in \Ker \jacobi{T}$. Similarly, for any other vertex $v \in V(T)$, conjugate symmetry of the unitary weights gives us
    \begin{align*}
        (\jacobi{T,U}\zeta)(v) 
        &= b_v \prod_{e \in \gamma_v} U_e^\ast \eta(v)\zeta_0 + a_{v \from p(v)} U_{v \from p(v)} \prod_{e \in \gamma p(v)} U_e^\ast \eta(p(v)) \zeta_0 + \sum_{u \in c(v)} a_{v \from u} U_{v \from u} \prod_{e \in \gamma_u} U_e^\ast \eta(u) \zeta_0 \\
        &= \left(b_v + a_{v \from p(v)}\eta(p(v)) + \sum_{u \in c(v)}a_{v \from u}\eta(u)\right)\prod_{e \in \gamma_v}U_e^\ast \zeta_0 \\
        &= 0.
    \end{align*}
\end{proof}
We can now proceed with the proof.
\begin{proof}[Proof of Theorem \ref{thm:aomoto-converse-unitary}]
    For any Aomoto tree $T$ of $G$, the induced Jacobi operator $\jacobi{T}$ has $\lambda$ in its spectrum. By Lemma \ref{lem:tree-jacobi-unitary}, the induced unitary-weighted Jacobi operator $\jacobi{T,U}$ thus satisfies $\dim \Ker(\lambda - \jacobi{T,U}) \ge n$, and therefore the space
    $$
        \bigoplus_{T \subset \aom{\lambda}{G}} \Ker(\lambda - \jacobi{T,U}) \subset \ell^2(\aom{\lambda}{G})\otimes \bbC^n \subset \ell^2(V) \otimes \bbC^n
    $$
    has dimension $n\,\cc\aom{\lambda}{G}$. We will show that it contains a subspace of dimension $n\ind{\lambda}{G}$ which is itself contained in $\Ker(\lambda - \jacobi{G}{U})$.
    
    For each $v \in \aom{\lambda}{G}$, let $\Pi_v : \ell^2(V) \otimes \bbC^n \to \ell^2(v) \simeq \bbC^n$ be the orthogonal projection to the $\bbC^n$-valued functions in $\ell^2(V)\otimes \bbC^n$ supported on $v$. For each $u \in \partial \aom{\lambda}{G}$, there is an operator
    \begin{align*}
        \phi_u &= \sum_{\substack{e \in \tau(u) \\ \sigma(e) \in \aom{\lambda}{G}}} a_e U_e \Pi_{\sigma(e)} : \bigoplus_{T \subset \aom{\lambda}{G}} \Ker(\lambda - \jacobi{T,U}) \to \ell^2(v) \simeq \bbC^n
        \intertext{and we define}
        \phi &= \bigoplus_{u \in \partial\aom{\lambda}{G}} \phi_u :  \bigoplus_{T \subset \aom{\lambda}{G}} \Ker(\lambda - \jacobi{T,U}) \to \ell^2(\partial \aom{\lambda}{G}) \simeq \bbC^{n|\partial \aom{\lambda}{G}|}.
    \end{align*}
    Counting dimensions, $\dim \Ker \phi \ge n\ind{\lambda}{G}$, and we will show that $\Ker \phi \subset \Ker(\lambda - \jacobi{G,U})$. 
    
    Let $\zeta \in \Ker\phi$; since the latter is a subspace of $\ell^2(\aom{\lambda}{G})\otimes \bbC^n \subset \ell^2(V)\otimes \bbC^n$, we have  $\zeta(u) = 0$ for every $u \notin \aom{\lambda}{G}$. This immediately gives $\left((\lambda - \jacobi{G,U})\zeta\right)(u) = 0$ for any $u$ outside the Aomoto set and its boundary, as $\zeta$ is identically zero on $u$ and its neighbors. On the other hand, if $u$ belongs to some tree $T$ in the Aomoto set, then because $\Ker \phi \subset \bigoplus_{T \subset \aom{\lambda}{G}} \Ker(\lambda - \jacobi{T,U})$ and $\zeta$ vanishes on $\partial \aom{\lambda}{G}$, we have
    \begin{align*}
        \big((\lambda - \jacobi{G,U})\zeta\big)(u) 
        &= \lambda \zeta(u) + b_u\zeta(u) + \sum_{e \in \tau(u)} a_e U_e \zeta(\sigma(e)) \\
        &= \lambda\zeta(u) + b_u\zeta(u) + \sum_{\substack{e \in \tau(u) \\ \sigma(e) \in T}} a_e U_e \zeta(\sigma(e)) = \big((\lambda - \jacobi{T,U})\zeta\big)(u) = 0
    \end{align*}
    It remains to check that $((\lambda - \jacobi{G,U})\zeta)(u) = 0$ when $u \in \partial \aom{\lambda}{G}$, which will follow from $\zeta \in \Ker \phi$. In particular, using a final time that $\zeta$ is supported only on the Aomoto set, if $u \in \partial \aom{\lambda}{G}$ we have
    \begin{align*}
        \big((\lambda - \jacobi{G,U})\zeta\big)(u) 
        &= \lambda \zeta(u) + b_u \zeta(u) + \sum_{e \in \tau(u)} a_e U_e \zeta(\sigma(e)) \\
        &= \sum_{\substack{e \in \tau(u) \\ \sigma(e) \in \aom{\lambda}{G}}} a_e U_e \zeta(\sigma(e)) = \big(\phi\zeta\big)(u) = 0.
    \end{align*}
\end{proof}

Theorems \ref{thm:multiplicity} and \ref{thm:aomoto-converse} now follow easily.

\begin{proof}[Proof of Theorem \ref{thm:multiplicity}]
    By Theorem \ref{thm:aomoto}, the Aomoto set satisfies the hypotheses of Theorem \ref{thm:aomoto-converse-unitary}, and if $H$ is an $n$-lift of $G$, both $\jacobi{G}$ and $\jacobi{H/G}$ are unitary-weighted Jacobi operators for $G$---the former with weights taking values in $U(1)$ and the latter in $U(n-1)$ by the discussion in Section \ref{subsec:jacobiops}. Thus $\lambda \in \Spec \jacobi{G}$ with multiplicity at least
    $$
        \cc(\aom{\lambda}{G}) - |\partial \aom{\lambda}{G}| = \ind{\lambda}{G} = |V(G)|\cdot \mu\{\lambda\}
    $$
    and similarly $\lambda \in \Spec \jacobi{H/G}$ with multiplicity at least $(n-1)|V(G)| \cdot \mu\{\lambda\}$, as desired.
\end{proof}

\begin{proof}[Proof of Theorem \ref{thm:aomoto-converse}]
    Assertion \eqref{thm:aomoto-converse-i} is a special case of Theorem \ref{thm:aomoto-converse-unitary}. For \eqref{thm:aomoto-converse-ii}, let $G_n$ be the sequence of lifts of $G$ promised in Lemma \ref{lem:lifts-weak-conv}, whose empirical spectral measures $\mu_{G_n}$ converge weakly to the density of states $\mu$. Applying Theorem \ref{thm:aomoto-converse-unitary} to each $\jacobi{G_n}$, viewed again as a unitary-weighted Jacobi operator on $G$, the empirical spectral measures $\mu_{G_n}$ satisfy $\mu_{G_n}\{\lambda\} \ge \frac{\cc(X) - |\partial X|}{V(G)}$. As these converge weakly to $\mu$, we have
    $$
        \mu\{\lambda\} \ge \frac{\cc(X) - |\partial X|}{V(G)}.
    $$
\end{proof}

\section{Spectral Delocalization for $A_\T$}
\label{sec:spectral-deloc}

In this section we will prove Theorem \ref{thm:setofparameterswithpp}. The fact that the set $\P$ mentioned in the theorem is a closed set  will follow from Theorem \ref{thm:aomoto-converse}, while the fact that it has large codimension will follow from Theorem \ref{thm:aomoto}. 

Let $G=(V, E)$ be a fixed finite and unweighted graph, for which we will vary the parameters $a_e$ and $b_v$. In what follows we will identify $\C^{|E|/2}$ with $\R^{|E|}$ so that the parameter space for the $a_e$ and the $b_v$ is a subset of $\R^{|E|+|V|}$, and we will denote elements of $\R^{|E|+|V|}$ by $(a, b)$, where $a=(a_e)_{e\in E}$ and $b =(b_v)_{v\in V}$. To ease notation define $m=|E|+|V|$.

Let $\calA(G)$ be the family of vertex sets  $X\subset V$ that induce an acyclic subgraph of $G$ with the property that  $\cc(X)-|\partial X| > 0$. For $X\in \calA(G)$  let $\calP_X\subset \bbR^{m}$ be the set of parameters for which all the Jacobi matrices of the trees induced by $X$  have a common eigenvalue. Note that Theorems \ref{thm:aomoto} (\ref{thm:aomoto-ii}) and \ref{thm:aomoto-converse} (\ref{thm:aomoto-converse-ii}) imply that
\begin{equation}
\label{eq:pointdecomp}
\P =\bigcup_{X\in \calA (G)} \P_X. 
\end{equation}
To compute the dimension of $\P$ we will analyze each $\P_X$ individually. This will require basic techniques and concepts from real algebraic geometry, which we  condense below. The reader familiar with real algebraic geometry may jump directly to Section \ref{sec:thedimensionofP}. 

\subsection{Real Algebraic Geometry Preliminaries}
\label{sec:basicalgebraicgeometry}

We will need some elementary facts about algebraic and semialgebraic sets, as well as appropriate notions of dimension for each of these. A thorough introduction can be found, for instance, in Sections 2 and 3 of \cite{coste2000introduction}.

An \textit{algebraic set} (or, more formally, a \textit{real affine algebraic set}) is a subset of $\R^n$ defined as the zero set of a family of polynomials with real coefficients. It is easy to see from the definition that any finite union or finite intersection of algebraic sets is still an algebraic set. Similarly, a \textit{semialgebraic set} is a subset of $\bbR^n$ defined by a family of polynomial inequalities. Any algebraic set is semialgebraic, but the reverse need not be true.

An algebraic set $\calX$ is \textit{irreducible} if it cannot be expressed as a disjoint union of two algebraic sets strictly contained in $\calX$. It well known \cite[Section 2.8]{bochnak2013real} that  any algebraic set $\calX$ admits a unique decomposition of the form $\calX =\bigcup_{i=1}^k \calX_i$ where each $\calX_i$ is an irreducible algebraic set and such that for no $i\neq j$ is $\calX_i$ contained in $\calX_j$. If $\calX$ is an irreducible algebraic set we define the \textit{algebraic dimension} of $\calX$, denoted $\dim \calX$, as the maximum integer $d$ such that there exists a chain of the form $\calX_0 \subset \calX_1 \subset \cdots \subset \calX_d =\calX$, where each $\calX_i$ is an irreducible algebraic set and each containment is strict. If $\calX$ is any algebraic set and $\calX =\bigcup_{i=1}^k \calX_i$ is its decomposition into irreducible sets, we define the algebraic dimension of $\calX$ as $\dim \calX =\max_{i\in [k]} \dim \calX_i$. It follows from these definitions that if $\calX$ and $\calY$ are two  algebraic  sets, with $\calX$ irreducible, and $\calX$ is not contained in $\calY$, then $\dim \calX\cap \calY < \dim \calX$. 

The notion of algebraic dimension for algebraic sets can be extended to a notion of dimension for semialgebraic sets via the \textit{cylindrical algebraic decomposition}, which we describe here. Any semialgebraic set admits a decomposition of the form $\calS = \bigcup_{i=1}^k \calC_i$, where the $\calC_i$ are disjoint semialgebraic subsets dieffeomorphic to the open hypercube $(0, 1)^{d_i}$ for some  nonnegative integer $d_i$ \cite[Corollary 3.8]{coste2000introduction}. With this setup we define the dimension of $\calS$ as $\dim \calS =\max_{i\in [k]} d_i$.  We also remind the reader that the Hausdorff dimension of a semialgebraic set coincides with the notion of dimension described here. 
 
Finally, we will use a fundamental fact about projections of semialgebraic sets. If $\calS\subset \R^{n+1}$ is  semialgebraic  and $\Pi: \R^{n+1}\to \R^n$ is the projection onto the first $n$ coordinates, then $\Pi \calS$ is a semialgebraic subset of $\R^n$ \cite[Theorem 2.3]{coste2000introduction}, and moreover $\dim \Pi (\calS) \leq \dim \calS$ \cite[Lemma 3.17]{coste2000introduction}. 
\subsection{The Dimension of $\P$ }
\label{sec:thedimensionofP}
We begin by proving the main technical result of this section. 

\begin{proposition}
\label{prop:codimension}
For any $X\in \calA(G)$, $\P_X$ is a semialgebraic set of dimension at most $m-\cc X+1$.  
\end{proposition}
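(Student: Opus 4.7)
The plan is to express $\P_X$ as the image under a coordinate projection of an auxiliary algebraic set in $\R^{m+1}$ obtained by adjoining $\lambda$ as an extra variable, and to control the dimension of this auxiliary set by a fiber argument that exploits the vertex- and edge-disjointness of the components of $X$. Let $T_1,\dots,T_p$ be the connected components of the subgraph induced by $X$, so $p=\cc X$, and for each $i\in[p]$ write $n_i=|E(T_i)|+|V(T_i)|$ for the number of real parameters on which $A_{T_i}$ depends. Observe that $(a,b)\in\P_X$ precisely when there exists $\lambda\in\R$ such that $\det(\lambda I-A_{T_i})=0$ for every $i\in[p]$; writing $Q_i(\lambda,a,b) = \det(\lambda I-A_{T_i})$ as a polynomial in $\lambda$ and in the $n_i$ parameters of $T_i$, this means that $\P_X$ is the image under the projection $\pi:\R^{m+1}\to\R^m$ forgetting $\lambda$ of the algebraic set
\[
\widetilde{\P}_X \;=\; \bigl\{(a,b,\lambda)\in\R^{m+1} : Q_i(\lambda,a|_{T_i},b|_{T_i})=0 \text{ for every } i\in[p]\bigr\}.
\]
By the Tarski--Seidenberg theorem \cite[Theorem 2.3]{coste2000introduction} the set $\P_X$ is then semialgebraic with $\dim\P_X\le\dim\widetilde{\P}_X$, so it suffices to prove $\dim\widetilde{\P}_X\le m-p+1$.

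To this end, I will slice $\widetilde{\P}_X$ along $\lambda$. For any fixed $\lambda_0\in\R$, the slice $\widetilde{\P}_X\cap\{\lambda=\lambda_0\}$ consists of those $(a,b)$ satisfying the $p$ equations $Q_i(\lambda_0,a|_{T_i},b|_{T_i})=0$. Since the $T_i$ are pairwise vertex- and edge-disjoint, these $p$ equations involve $p$ pairwise disjoint groups of coordinates of $(a,b)$, and the remaining $m-\sum_i n_i$ coordinates are entirely unconstrained. The crucial point is that for \emph{every} $\lambda_0\in\R$ the polynomial $Q_i(\lambda_0,\cdot,\cdot)$ in the $n_i$ parameters of $T_i$ is nonzero: specializing all edge weights of $T_i$ to zero makes $A_{T_i}$ diagonal with entries $b_v$, so $Q_i(\lambda_0,0,b) = \prod_{v\in V(T_i)}(\lambda_0-b_v)$, a nonzero polynomial in $b$. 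Hence each $\{Q_i(\lambda_0,\cdot,\cdot)=0\}$ is a proper algebraic subset of $\R^{n_i}$ of dimension at most $n_i-1$, and the slice (a product of $p$ such sets with an unconstrained $\R^{m-\sum_i n_i}$ factor) has dimension at most $\sum_i(n_i-1) + m - \sum_i n_i = m-p$.

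Finally, a standard consequence of the cylindrical algebraic decomposition \cite[Corollary 3.8]{coste2000introduction} gives that any semialgebraic subset of $\R^{m+1}$ whose slices along the last coordinate all have dimension at most $d$ has itself dimension at most $d+1$; applied with $d=m-p$ this yields $\dim\widetilde{\P}_X\le m-p+1$, and hence $\dim\P_X\le m-\cc X+1$, as desired. The main subtlety is in establishing the non-vanishing of $Q_i(\lambda_0,\cdot,\cdot)$ uniformly in $\lambda_0\in\R$ rather than only for generic $\lambda_0$; the $a=0$ specialization above settles this in one stroke.
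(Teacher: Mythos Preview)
Your proof is correct and reaches the same conclusion as the paper's, but by a genuinely different and somewhat cleaner route.

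Both arguments introduce $\lambda$ as an extra variable, form the algebraic set $\widetilde{\P}_X=\{(a,b,\lambda):Q_i=0\ \forall i\}\subset\R^{m+1}$, bound its dimension by $m-p+1$, and then project down via Tarski--Seidenberg. The difference lies in how the dimension bound is obtained. The paper proceeds inductively: it shows $\bigcap_{i\le r}\{P_i=0\}$ has codimension at least $r$, and the inductive step requires decomposing this set into irreducible components and verifying that none of them is contained in $\{P_{r+1}=0\}$ (done by shifting the potential on $T_{r+1}$, which leaves the earlier constraints untouched). You instead slice along $\lambda$: for fixed $\lambda_0$ the $p$ constraints decouple into disjoint coordinate blocks, each $Q_i(\lambda_0,\cdot)$ is nonzero by the $a=0$ specialization, so the slice is a product of $p$ proper hypersurfaces with an unconstrained factor and has dimension at most $m-p$; then the fiber-dimension bound gives $\dim\widetilde{\P}_X\le m-p+1$. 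Your argument avoids irreducible decompositions entirely and is shorter. The paper's approach, on the other hand, needs no fiber-dimension lemma and stays within the algebraic (rather than semialgebraic) category until the final projection.

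Two small points worth tightening. First, you should note explicitly (as the paper does) that each $Q_i$ is a polynomial with \emph{real} coefficients in the real parameters $(x,y,b,\lambda)$: a priori $\det(\lambda I-A_{T_i})$ involves complex entries, but Hermiticity of $A_{T_i}$ forces $\Im Q_i\equiv 0$. Second, the fiber-dimension inequality you invoke (``all $\lambda$-slices have dimension $\le d$ implies total dimension $\le d+1$'') is standard but is not literally Corollary~3.8 in Coste; it follows from it by taking a CAD adapted to $\widetilde{\P}_X$ with $\lambda$ as the first coordinate and observing that a top-dimensional cell projects to a cell of $\R$ of dimension at most $1$ while its fiber over any point of that projection has dimension $\dim C-\dim\pi(C)\ge\dim C-1$. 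Either spell this out or cite the fiber-dimension theorem directly.
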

\begin{proof}
Let $p=\cc X$ and $T_1, \dots,T_p$ be the trees induced by $X$.  For any Hermitian edge weights $a: E \to \bbC$, let $x =\Re(a)$ and $y=\Im(a)$, that is, for every $e\in E$ we write  $a_e= x_e + i y_e$, with $x_e, y_e \in \bbR$. View the characteristic polynomials of the $T_i$ as polynomials in the $x_e, y_e, b_v$ and $z$, namely, define $P_i(x, y, b, z)= \det(z-A_{T_i})$. We will first show that each $P_i(x, y, b, z)$ is a polynomial with real coefficients. Remember that $ \Im P_i(x, y, b, z)$  is a polynomial with real coefficients in the aformentioned variables. Now, since $A_{T_i}$ is Hermitian, for any choice of $x, y\in \R^{|E|/2}$ and $b \in \R^{|V|}$, we have that $\det(z-A_{T_i})\in \R$, so $\Im P_i \equiv 0$  on $\R^{m+1}$ and hence $\Im P_i$ is the zero polynomial. It then follows that $P_i = \Re P_i$, which means that $P_i\in \mathbb{R}[x, y, b, z]$.  

Now, for $i, r\in [p]$ define the  algebraic sets
$$
    \mathcal{X}_i=\{(x, y, b, z) \in \R^{m+1}: P_i(x, y, b, z)=0 \}\qquad \text{and} \qquad \mathcal{X}_{\leq r} =\bigcap_{i=1}^r \mathcal{X}_i.
$$
We will now show that $\mathcal{X}_{\leq r}$ has codimension at least $r$ for all $r\in [p]$, which implies in particular that that $\mathcal{X}_{\leq p}$ has algebraic dimension at most $m+1-p$. For the base case note that since $P_1$ is not the zero polynomial, $\mathcal{X}_1$ is a proper algebraic subset of $\R^{m+1}$, and since $\R^{m+1}$ is irreducible $\dim (\mathcal{X}_1) < \dim(\R^{m+1}) = m+1$.  Now, for $1\leq r\leq p-1$ assume that $\dim(\mathcal{X}_{\leq r})\leq m-r+1$,  define the subspace 
$$
    W_r = \left\{(x, y, b, z)\in \R^{m+1}: b_v=0\text{ for } v\in V(G)\setminus \bigcup_{i=1}^r V(T_i)\right\},
$$
and let $\mathcal{Y} = \calX_{\leq r} \cap W_r$. 

The set $\mathcal{Y}$ is itself algebraic. Moreover, since for each $i\leq r$ the set $P_i$ depends only on those $b_v$'s with $v\in \bigcup_{i=1}^r V(T_i)$, we have that $\mathcal{X}_{\leq r} = \mathcal{Y} \times W_r^\perp$. Let $\bigcup_{i=1}^{n_1} \mathcal{Y}_i$ be the decomposition of $\mathcal{Y}$ into irreducible components. Since $\mathcal{Y}_i$ and $W_r^\perp$ are both irreducible $\mathcal{Y}_i\times W_r^\perp$ is as well, and hence $\bigcup_{i\in [n_1]} (\mathcal{Y}_i\times W_r^\perp)$ is in fact the decomposition of $\mathcal{X}_{\leq r}$ into irreducible components. On the other hand
$$
    \mathcal{X}_{\leq r+1} = \mathcal{X}_{\leq r} \cap \mathcal{X}_{r+1} = \bigcup_{i=1}^{n_1}(\mathcal{Y}_i\times W_r^\perp)\cap \calX_{r+1}.
$$
We will now show that, for every $i\in [n_1]$,   $\mathcal{Y}_i\times W_r^\perp$ is not contained in $\calX_{r+1}$. 

Indeed, fix $(x, y, b, z)\in \mathcal{Y}_i\times W_r^\perp$. Adding a constant $c\in \R$ to the $b_v$ with $v\in V(T_{r+1})$ has the effect of shifting the spectrum of $A_{T_{r+1}}$ by $c$. We can then find $b'\in \R^{|V|}$ with the property that $b_v'=b_v$ for all $v\in V(G)\setminus V(T_{r+1})$ and such that $P_{r+1}(x, y, b', z)\neq 0$. By construction we have $(x, y, b', z)\in \mathcal{Y}_i\cap W_r^\perp $ and $(x, y, b',z)\notin \mathcal{X}_{r+1}$ as we wanted to show.   This implies that $(\mathcal{Y}_i\times W_r^\perp)\cap \calX_{r+1}$ is a proper subset of $\mathcal{Y}_i\times W_r^\perp$, and since the latter is irreducible we get
$$
    \dim \left((\mathcal{Y}_i\times W_r^\perp)\cap \mathcal{X}_{r+1}\right) < \dim( \mathcal{Y}_i\times W_r^\perp)\leq \dim(\mathcal{X}_{\leq r})\leq m+1-r,
$$
concluding the inductive step. Finally, let $\Pi: \R^{m+1}\to \R^m$ be the projection defined by $\Pi(x, y, b, z) = (x, y, b)$, and note that $\Pi \mathcal{X}_{\leq p} = \mathcal{P}_X$. From the results mentioned in Section \ref{sec:basicalgebraicgeometry}, $\Pi \mathcal{X}_{\leq p}$ is a semialgebraic set whose dimension is less or equal to that of $\mathcal{X}_{\leq p}$, and in turn $\dim \mathcal{X}_{\leq p}\leq m-p+1$.
\end{proof}

We are now ready to prove Theorem \ref{thm:setofparameterswithpp}. 

\begin{proof}[Proof of Theorem \ref{thm:setofparameterswithpp}.]
By Proposition \ref{prop:codimension} and Equation \eqref{eq:pointdecomp}, $\P$ is semialgebraic with
$$
    \mathrm{codim}\,\P \ge \min_{X\in \mathcal{A}(G)} \cc X-1 \geq \min_{X\in \mathcal{A}(G)} \partial X, 
$$
and we want to further lower bound the latter quantity. As $G$ has at least one cycle, $\partial X\neq \emptyset$ for all $X\in \calA (G)$, so  $\P$ has codimension at least 1. Now, if $d_{\min} \geq 2$ take any tree $T$ induced by $X$. Any vertex $v$ of $T$ must be connected to at least $d_{\min}-1$ distinct vertices in $\partial X$, and hence $\cc X -1\geq \partial X\geq d_{\min}-1$. This proves the bound $\mathrm{codim}\, \P\geq \max\{d_{\min}-1, 1\}$.  

We show finally that $\calP^c$ is open. For every $X\in \calA(G)$ denote the forest induced by $X$ by $\F_X$. Fix $(a,b) \in \P^c$. By Theorem \ref{thm:aomoto-converse}, for every $X\in \calA(G)$ the Jacobi matrices of the trees in $\F_X$ (with weights and potentials given by $a$ and $b$), do not have a common eigenvalue. Now, define 
$$
    S = \bigcup_{X\in \calA(G)} \bigcup_{T\in \F_X} \Spec \jacobi{T}.
$$
As $\calF_X$ is finite for each of the finitely many $X \in \calA(G)$, we may safely define $\Delta > 0$ to be the smallest distance between two distinct points in $S$. We will show that if $(a', b')\in \R^m$ satisfies $\|(a, b)-(a', b')\|_{2} < \Delta/2$ then $(a', b')\in \calP^c$. 

Assume otherwise. Then there exists an $X\in \calA(G)$ such that the Jacobi matrices with parameters in $(a', b')$ of the trees in $\F_X$  have a common eigenvalue $\lambda$. Let $T_1, \dots, T_p$ be the trees in $\F_X$ with parameters in $(a, b)$ and let $T_1', \dots, T_p'$ denote the same trees but with parameters in $(a', b')$.  For every $i$ let $\lambda_i$ be the closest point in $ \Spec A_{T_i}$ to $\lambda$. Since $(a, b)\in \calP^c$ we have  $\lambda_i\neq \lambda_j$ for some $i,j$. On the other hand since $\|A_{T_i}-A_{T_i'}\|\leq \|A_{T_i}-A_{T_i'}\|_{F} \leq \|(a, b)\|_2 <\Delta/2$ and similarly $\|A_{T_j}-A_{T_j'}\| <\Delta/2$, the triangle inequality and Weyl's inequality together imply
$$
    |\lambda_i - \lambda_j| \le |\lambda_i - \lambda | + |\lambda_j - \lambda| < \Delta/2 + \Delta/2 = \Delta,
$$
contradicting the definition of $\Delta$.
\end{proof}

\bigskip
\subsection*{Acknowledgements}
We are grateful to Nikhil Srivastava for many helpful discussions. We also thank Barry Simon for his many helpful comments and suggestions on a previous version of this manuscript, and in particular for his guidance in showing the current version of Theorem \ref{thm:setofparameterswithpp}. JGV also thanks Irit Huq-Kuruvilla for patiently clarifying many basic concepts in geometry.   JB is supported by the NSF Graduate Research Fellowship Program under Grant DGE-1752814. JGV and SM are supported by NSF Grants CCF-1553751 and CCF-2009011.

\bibliographystyle{alpha}
\bibliography{point-spectrum}

\newcommand{\etalchar}[1]{$^{#1}$}
\begin{thebibliography}{ABKSon}

\bibitem[A{\etalchar{+}}88]{aomoto1988algebraic}
Kazuhiko Aomoto et~al.
\newblock Algebraic equations for green kernel on a tree.
\newblock {\em Proceedings of the Japan Academy, Series A, Mathematical
  Sciences}, 64(4):123--125, 1988.

\bibitem[ABKSon]{boundaryconditions2020}
N.~Avni, J.~Breuer, G.~Kilai, and B.~Simon.
\newblock Periodic boundary conditions for periodic jacobi matrices on trees,
  paper in preparation.

\bibitem[ABS20]{avni2020periodic}
Nir Avni, Jonathan Breuer, and Barry Simon.
\newblock Periodic {J}acobi matrices on trees.
\newblock {\em Advances in Mathematics}, 370:107241, 2020.

\bibitem[AFH15]{angel2015non}
Omer Angel, Joel Friedman, and Shlomo Hoory.
\newblock The non-backtracking spectrum of the universal cover of a graph.
\newblock {\em Transactions of the American Mathematical Society},
  367(6):4287--4318, 2015.

\bibitem[AISW20]{anantharaman2020absolutely}
Nalini Anantharaman, Maxime Ingremeau, Mostafa Sabri, and Brian Winn.
\newblock Absolutely continuous spectrum for quantum trees.
\newblock {\em arXiv preprint arXiv:2003.12765}, 2020.

\bibitem[Ana18]{anantharaman2018delocalization}
Nalini Anantharaman.
\newblock Delocalization of schr{\"o}dinger eigenfunctions.
\newblock {\em Proceedings of the ICM, Rio de Janeiro}, 2018.

\bibitem[Aom91]{aomoto1991point}
K~Aomoto.
\newblock Point spectrum on a quasihomogeneous tree.
\newblock {\em Pacific Journal of Mathematics}, 147(2):231--242, 1991.

\bibitem[AS19]{anantharaman2019quantum}
Nalini Anantharaman and Mostafa Sabri.
\newblock Quantum ergodicity on graphs: from spectral to spatial
  delocalization.
\newblock {\em Annals of Mathematics}, 189(3):753--835, 2019.

\bibitem[BC19]{bordenave2019eigenvalues}
Charles Bordenave and Beno{\^\i}t Collins.
\newblock Eigenvalues of random lifts and polynomials of random permutation
  matrices.
\newblock {\em Annals of Mathematics}, 190(3):811--875, 2019.

\bibitem[BCR13]{bochnak2013real}
Jacek Bochnak, Michel Coste, and Marie-Fran{\c{c}}oise Roy.
\newblock {\em Real algebraic geometry}, volume~36.
\newblock Springer Science \& Business Media, 2013.

\bibitem[Cos00]{coste2000introduction}
Michel Coste.
\newblock An introduction to semialgebraic geometry, 2000.

\bibitem[F{\etalchar{+}}03]{friedman2003relative}
Joel Friedman et~al.
\newblock Relative expanders or weakly relatively ramanujan graphs.
\newblock {\em Duke Mathematical Journal}, 118(1):19--35, 2003.

\bibitem[Fie75]{fiedler1975eigenvectors}
Miroslav Fiedler.
\newblock Eigenvectors of acyclic matrices.
\newblock {\em Czechoslovak Mathematical Journal}, 25(4):607--618, 1975.

\bibitem[Fri93]{friedman1993}
Joel Friedman.
\newblock Some geometric aspects of graphs and their eigenfunctions.
\newblock {\em Duke Math. J.}, 69(3):487--525, 03 1993.

\bibitem[GM88]{godsil1988walk}
Chris~D. Godsil and Bojan Mohar.
\newblock Walk generating functions and spectral measures of infinite graphs.
\newblock {\em Linear Algebra and its Applications}, 107:191--206, 1988.

\bibitem[GVK19]{vargas2019spectra}
Jorge Garza-Vargas and Archit Kulkarni.
\newblock Spectra of infinite graphs via freeness with amalgamation.
\newblock {\em arXiv preprint arXiv:1912.10137}, 2019.

\bibitem[Kes59]{kesten1959symmetric}
Harry Kesten.
\newblock Symmetric random walks on groups.
\newblock {\em Transactions of the American Mathematical Society},
  92(2):336--354, 1959.

\bibitem[KLW12]{keller2012absolutely}
Matthias Keller, Daniel Lenz, and Simone Warzel.
\newblock Absolutely continuous spectrum for random operators on trees of
  finite cone type.
\newblock {\em Journal d'Analyse Math{\'e}matique}, 118(1):363--396, 2012.

\bibitem[KLW13]{keller2013spectral}
Matthias Keller, Daniel Lenz, and Simone Warzel.
\newblock On the spectral theory of trees with finite cone type.
\newblock {\em Israel Journal of Mathematics}, 194(1):107--135, 2013.

\bibitem[McK81]{mckay1981expected}
Brendan~D. McKay.
\newblock The expected eigenvalue distribution of a large regular graph.
\newblock {\em Linear Algebra and its Applications}, 40:203--216, 1981.

\bibitem[Nyl98]{nylen1998null}
Peter Nylen.
\newblock Null space structure of tree-patterned matrices.
\newblock {\em Linear algebra and its applications}, 279(1-3):153--161, 1998.

\bibitem[RS78]{simon1978methods}
Michael Reed and Barry Simon.
\newblock {\em Methods of Modern Mathematical Physics, IV: Analysis of
  Operators}.
\newblock Academic Press, New York, 1978.

\bibitem[SS92]{sy1992discrete}
Polly~Wee Sy and Toshikazu Sunada.
\newblock Discrete {S}chr{\"o}dinger operators on a graph.
\newblock {\em Nagoya Mathematical Journal}, 125:141--150, 1992.

\bibitem[Sun92]{sunada1992group}
Toshikazu Sunada.
\newblock Group {$C*$}-algebras and the spectrum of a periodic
  {S}chr{\"o}dinger operator on a manifold.
\newblock {\em Canadian journal of mathematics}, 44(1):180--193, 1992.

\end{thebibliography}

\end{document}